\newcommand{\seq}{\subseteq}
\newcommand{\C}{\mathbb{C}}
\newtheorem{thm}{Theorem}[section]
\newtheorem*{thm-nl}{Theorem}
\newtheorem*{prop-nl}{Proposition}
\newtheorem{definition}[thm]{Definition}
\newtheorem{lem}[thm]{Lemma}
\def\PP{{\textbf P}}
\def\OO{\mathcal{O}}
\def\X{\mathcal{X}}
\def\cM{\mathcal{M}}
\def\H{\mathcal{H}}
\def\Pic0{{\rm Pic}^0(X)}
\def\mm{\overline{\mathcal{M}}}
\def\hh{\overline{\mathcal{H}}}
\def\ph{\widetilde{\mathcal{G}}_{2a-1,a}^{\mathrm{ns}}}
\newtheorem*{cor-nl}{Corollary}
\newtheorem{conjecture}[thm]{Conjecture}
\newtheorem*{conjecture-nl}{Conjecture}
\newtheorem*{quest-nl}{Question}
\newtheorem*{quests-nl}{Questions}
\newtheorem{prop}[thm]{Proposition}
\theoremstyle{remark}
\newtheorem*{eg}{Example}
\newtheorem{remark}[thm]{Remark}
\title{{Linear Syzygies of Curves with prescribed gonality}}
\author[G. Farkas]{Gavril Farkas}
\address{Humboldt-Universit\"at zu Berlin, Institut f\"ur Mathematik,  Unter den Linden 6
\hfill \newline\texttt{}
 \indent 10099 Berlin, Germany} \email{{\tt farkas@math.hu-berlin.de}}
\author[M. Kemeny]{Michael Kemeny}
\address{Stanford University, Department of Mathematics, 450 Serra Mall
\hfill \newline\texttt{}
 \indent CA 94305, USA} \email{{\tt michael.kemeny@gmail.com}}
\begin{document}

\begin{abstract}
We prove two statements concerning the linear strand of the minimal free resolution of a $k$-gonal curve $C$ of genus $g$. Firstly,
we show that a general curve $C$ of genus $g$ of non-maximal gonality $k\leq \frac{g+1}{2}$ satisfies Schreyer's Conjecture, that is, $b_{g-k,1}(C,\omega_C)=g-k$.
This statement goes beyond Green's Conjecture and predicts that all highest order linear syzygies in the canonical embedding of $C$ are determined by the
syzygies of the $(k-1)$-dimensional scroll containing $C$. Secondly, we prove an optimal effective version of the Gonality Conjecture for general $k$-gonal curves, which makes more precise the (asymptotic) Gonality Conjecture proved by Ein-Lazarsfeld and improves results of Rathmann.

\end{abstract}

\maketitle
\setcounter{section}{-1}
\section{Introduction}

\noindent {\bf{1. The effective gonality conjecture.}}
Let $C$ be a smooth complex algebraic curve and $L$ a very ample line bundle on $C$ inducing an embedding $\varphi_L:C\hookrightarrow \PP H^0(C,L)$. In order to describe the equations of this embedding, after setting $r:=r(L)$,
we consider the finitely generated graded $S:=\text{Sym}\; H^0(C,L) \cong \C [x_0, \ldots, x_r]$-module $\Gamma_C(L):=\bigoplus_{n} H^0(C,L^{\otimes n}).$
By the \emph{Hilbert Syzygy Theorem}, one has a minimal free resolution
$$ 0  \longrightarrow{} F_{r-1}\longrightarrow  \cdots \longrightarrow F_0 \longrightarrow \Gamma_C(L) \longrightarrow 0,$$
where
$$ F_p= \bigoplus_{q>0} K_{p,q}(C,L) \otimes S(-p-q),$$
with $K_{p,q}(C,L)$ being the Koszul cohomology group of $p$-th order syzygies of weight $q$. As usual,  the \emph{graded Betti numbers} of $(C,L)$ are defined by
$ b_{p,q}:=\dim K_{p,q}(C,L).$ If $L$ is non-special, then $K_{p,q}(C,L)=0$ for all $q\geq 3$. Accordingly, the graded Betti diagram of $(C,L)$ consists only
of two non-trivial rows: the linear strand ($q=1$) and the quadratic strand ($q=2$).

The quadratic strand of the resolution is the subject of the Green-Lazarsfeld Secant Conjecture \cite{green-lazarsfeld-projective} and has been studied extensively in \cite{generic-secant}, \cite{extremal-gonality}. The linear row is the subject of the Gonality Conjecture formulated in the same paper \cite{green-lazarsfeld-projective}.

\vskip 3pt

Assume $C$ is $k$-gonal and let $L$ be a line bundle on $C$ of degree $\mbox{deg}(L)\geq 2g-1+k$. By the Green-Lazarsfeld Nonvanishing Theorem \cite[Appendix]{green-koszul}, one has $K_{h^0(L)-k-1,1}(C,L)\neq 0$. In a major breakthrough, generalizing results in \cite{AV} in the case of general $k$-gonal curves, Ein and Lazarsfeld \cite{ein-lazarsfeld} proved that for an \emph{arbitrary} smooth curve $C$ of gonality $k$, if
$\mbox{deg}(L)\gg 0$, then
\begin{equation}\label{goncon1}K_{h^0(L)-k,1}(C,L)=0.
\end{equation}
This result has been significantly improved by Rathmann \cite{rathmann}, who showed that the vanishing (\ref{goncon1}) holds for every smooth curve $C$ of genus $g$,
when $\mbox{deg}(L)\geq 4g-3$.  As already indicated in the original paper \cite{green-lazarsfeld-projective} Conjecture 3.7, one can ask for an effective version of the Gonality Conjecture. We show the following:

\begin{thm} \label{arb-gon}
Let $C$ be a general $k$-gonal curve of genus $g\geq 4$. Then for each line bundle  $L$ on $C$ of degree $\mathrm{deg}(L) \geq 2g-1+k$, one has
$$K_{h^0(L)-k,1}(C,L)=0.$$
\end{thm}

While the original Gonality Conjecture has been formulated as an asymptotic statement in $\mbox{deg}(L)$, the bound appearing in Theorem \ref{arb-gon} is already raised as a possibility in
\cite[page  86]{green-lazarsfeld-projective}.
Clearly Theorem \ref{arb-gon} implies $K_{p,1}(C,L)=0$, for all $p\geq h^0(C,L)-k$. The bound on $\mbox{deg}(L)$ appearing in Theorem \ref{arb-gon} is optimal. Indeed, if $A \in W^1_k(C)$ is a pencil of minimal degree, then $$K_{g-1,1}(C,\omega_C \otimes A) \neq 0,$$ by the Green--Lazarsfeld Nonvanishing Theorem, that is, on every curve there exist line bundles of degree $2g-2+k$ which do not verify (\ref{goncon1}).

\vskip 3pt

In the interest of convenience, we say that a smooth curve $C$ of genus $g$ and gonality $k$ satisfies the \emph{Effective Gonality Conjecture} if for each line bundle $L\in \mbox{Pic}^d(C)$, where $d\geq 2g-1+k$, one has $K_{h^0(L)-k,1}(C,L)=0$. Equivalently, if  there exists a line bundle $L\in \mbox{Pic}^{2g-1+k}(C)$ such that $K_{g,1}(C,L)\neq 0$, then $\mbox{gon}(C)\leq k-1$. Theorem \ref{arb-gon} can be reformulated as stating that a general $k$-gonal curve of genus $g\geq 4$ verifies the Effective Gonality Conjecture.

\vskip 3pt

By Green's $K_{p,1}$-theorem,
see \cite[Theorem 3.c.1]{green-koszul}, an \emph{arbitrary} $3$-gonal curve of genus $g\geq 4$ satisfies the Effective Gonality Conjecture. The same conclusion holds for each $4$-gonal curve of genus $g\geq 7$, see \cite[Proposition 3.8]{Te} or
\cite{aprodu-sernesi}. Note that Theorem \ref{arb-gon} fails for $g=3$. In this case, the general curve is trigonal and it is easy to see that $K_{3,1}(C,\omega_C^{\otimes 2}) \neq 0$, using the fact
that the canonical linear system embeds $C$ in the plane.

\vskip 4pt

For curves of maximal gonality of odd genus $g\geq 5$, our results are complete:

\begin{thm} \label{odd}
Every smooth curve of odd genus $g\geq 5$  and maximal gonality satisfies the Effective Gonality Conjecture.
\end{thm}

Theorem \ref{odd}, which plays an essential role in the proof of Theorem \ref{arb-gon} turns out to be intimately related to the divisorial case of the Green-Lazarsfeld Secant Conjecture proved in full generality \cite[Theorem 1.4]{generic-secant}. We observe that using \cite{generic-secant}, if $C$ is a smooth curve of genus $g=2n+1$ and gonality $n+2$, the following equivalence holds for a line bundle $M\in \mbox{Pic}^{2g}(C)$:
\begin{equation}\label{equiv}
K_{n,1}(C,M)\neq 0\Longleftrightarrow M-K_C\in C_{n+1}-C_{n-1}.
\end{equation}

The right hand side denotes the divisorial difference variety $C_{n+1}-C_{n-1}\subseteq \mbox{Pic}^2(C)$. An argument involving the geometry of secant varieties for line bundles on $C$ then shows that (\ref{equiv}) implies the vanishing
$K_{g,1}(C,L)=0$, for \emph{every} line bundle $L\in \mbox{Pic}^{5n+3}(C)$, thus establishing Theorem \ref{odd}. In order to deduce Theorem \ref{arb-gon}, we fix a value for the gonality $k\leq \frac{g+3}{2}$ and perform induction on the genus $g$; the initial step is Theorem \ref{odd}. By induction, assume that the general smooth curve $C$ of genus $g$ and gonality $k$ satisfies the Effective Gonality Conjecture. The stable curve $X$ of genus $g+1$ obtained by adding an elliptic curve $E$ at a point of ramification of a degree $k$ pencil on $C$ lies in the limit in $\mm_{g+1}$ of the locus of smooth $k$-gonal curves of genus $g+1$. An analysis of syzygies of line bundles of bidegree $(2g+k,1)$ on $X$ allows us to deduce the Effective Gonality Conjecture for a smooth deformation of $X$ having gonality $k$.

\vskip 5pt

\noindent {\bf{2. Schreyer's Conjecture}.}
Consider a general $k$-gonal curve canonically embedded curve $C\hookrightarrow \PP^{g-1}$ of gonality $k$. Green's Conjecture,  known in this case, see
\cite{V1}, \cite{V2}, \cite{aprodu-remarks}, and asserting that
$$K_{p,1}(C,\omega_C)=0 \ \ \mbox{ if and only if } \ \ \  p\geq g-k+1,$$
determines the length of the linear (as well as that of the quadratic) strand of the resolution of $C$. Schreyer's Conjecture \cite[\S 6]{schreyer-topics} and \cite{SSW} addresses the more refined question of what actually \emph{is} the Betti diagram of $C$, that is, determine the values $b_{p,1}(C,\omega_C)$ for $k-2\leq p\leq g-k$. Note that in the case when $C$ has the same gonality as a general curve of genus $g$, that is,  $\mbox{gon}(C)=\lfloor \frac{g+3}{2}\rfloor$, and only in this case, Green's Conjecture determines the entire resolution of $C$. Indeed, in this case Green's Conjecture is equivalent to the statement  that the resolution of $C\subseteq \PP^{g-1}$ is \emph{natural}, or equivalently $$b_{p,2}(C,\omega_C)\cdot b_{p+1,1}(C,\omega_C)=0$$ for all $p$. Since the differences $b_{p+1,1}(C,\omega_C)-b_{p,2}(C,\omega_C)$ are known and independent of $C$, knowing which Betti numbers vanish amounts to knowing the entire Betti diagram.

\vskip 3pt

Assume now $\mbox{gon}(C)\leq \frac{g+1}{2}$, that is, $C$ has non-maximal gonality.  In this case, Green's Conjecture predicts the following resolution, where we observe that $b_{p,1}(C,\omega_C)\cdot b_{p,2}(C,\omega_C)\neq 0$ for $k-2\leq p\leq g-k$.
\begin{table}[htp!]
\begin{center}
\begin{tabular}{|c|c|c|c|c|c|c|c|c|c|}
\hline
$1$ & $2$ & $\ldots$ & $k-3$ & $k-2$  & $\ldots$ & $g-k$ & $g-k+1$ & $\ldots$ & $g-2$\\
\hline
$b_{1,1}$ & $b_{2,1}$ & $\ldots$ & $b_{k-3,1}$ & $b_{k-2,1}$ &  $\ldots$ & $b_{g-k,1}$ & $0$ &  $\ldots$ & $0$ \\
\hline
$0$ &  $0$ & $\ldots$ & $0$ & $b_{k-2,2}$ &  $\ldots$ & $b_{g-k,2}$ & $b_{g-k+1,2}$ & $\ldots$ & $b_{g-2,2}$\\
\hline
\end{tabular}
\end{center}

    \caption{The Betti table of a general canonical $k$-gonal curve of genus $g$.}
    \label{tab:even}
\end{table}

It is known \cite{arbarello-cornalba} that such a curve $C$ carries a unique pencil $A\in W^1_k(C)$ of minimal degree, inducing a $(k-1)$-dimensional scroll $X\subseteq \PP^{g-1}$
swept out by the fibres of $|A|$. The Betti numbers of $(X, \OO_X(1))$ are determined by the Eagon-Northcott complex, see \cite{schreyer1}. Since $C\subseteq X\subseteq \PP^{g-1}$, one has
the following inequality (see also Section \ref{extending})
\begin{equation}\label{gonericconj}
b_{p,1}(C,\omega_C)\geq b_{p,1}\bigl(X,\OO_X(1)\bigr)=p\cdot {g-k+1 \choose p+1}.
\end{equation}
It was originally expected that the inequality (\ref{gonericconj}) is always an equality for $p \geq \lceil \frac{g-1}{2} \rceil$. This, however, is now known to fail. Indeed, Bopp \cite{bopp} showed that the for a general $5$-gonal curve of sufficiently high genus, if  $m:=\lceil \frac{g-1}{2}\rceil$, then $b_{m,1}(C,\omega_C)>b_{m,1}(X, \OO_X(1))$. Schreyer's Conjecture \cite{SSW} concerns the value of the highest non-zero
Betti number in the linear strand and predicts that in this case, under suitable generality assumptions, inequality (\ref{gonericconj}) is an equality.

\vskip 3pt

\begin{conjecture}[Schreyer's Conjecture]\label{schrconj1}
Let $C$ be a curve of genus $g$ and non-maximal gonality $3\leq k \leq \frac{g+1}{2}$. Assume $W^1_k(C)=\{A\}$ is a reduced single point and $A$ is the unique line bundle of degree at
most $g-1$ achieving the Clifford index. Then
$$b_{g-k,1}(C,\omega_C)=g-k.
$$
\end{conjecture}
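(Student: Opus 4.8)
The vanishing $b_{p,1}(C,K_C)=0$ for $p>g-k$ is Green's Conjecture, which holds for curves of Clifford index $k-2$ by \cite{V1}, \cite{V2}, \cite{aprodu-remarks}; and the inequality $b_{g-k,1}(C,K_C)\ge g-k$ is (\ref{gonericconj}), coming from the injection $K_{g-k,1}(X,\OO_X(1))\hookrightarrow K_{g-k,1}(C,K_C)$ induced by the inclusion $C\subseteq X\subseteq\PP^{g-1}$ of $C$ in the $(k-1)$-dimensional scroll $X=\PP(\mathcal{E})\xrightarrow{\pi}\PP^1$ swept out by $|A|$ (see Section \ref{extending}), together with the fact that $X$, being a variety of minimal degree, has $b_{g-k,1}(X,\OO_X(1))=g-k$. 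So the whole content is the upper bound $b_{g-k,1}(C,K_C)\le g-k$, equivalently --- by the self-duality $b_{p,q}(C,K_C)=b_{g-2-p,3-q}(C,K_C)$ --- the statement $b_{k-2,2}(C,K_C)=g-k$ at the bottom of the quadratic strand.

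The plan is to phrase this through the scroll. Setting $R_X=S/I_X$, $R_C=S/I_C$ and $J:=I_C/I_X$, we have the exact sequence $0\to J\to R_X\to R_C\to 0$. Since $X$ has a $2$-linear resolution (the Eagon--Northcott complex), $K_{p,q}(X,\OO_X(1))=0$ for all $q\ge 2$; and $J_0=J_1=0$, the latter because $H^0(X,\OO_X(1))\xrightarrow{\sim}H^0(C,K_C)$, so $J$ is generated in degrees $\ge 2$. The long exact sequence in Koszul cohomology attached to the above then collapses, in internal degree $g-k+1$, to
$$0\longrightarrow K_{g-k,1}\bigl(X,\OO_X(1)\bigr)\longrightarrow K_{g-k,1}(C,K_C)\longrightarrow \mathrm{Tor}^S_{g-k-1}(J,\C)_{g-k+1}\longrightarrow 0,$$
so that $b_{g-k,1}(C,K_C)=g-k$ is equivalent to $\mathrm{Tor}^S_{g-k-1}(J,\C)_{g-k+1}=0$ --- geometrically, to the non-existence of non-scrollar top linear syzygies of $C$. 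When $k=3$ this is immediate: then $J$ is generated in degree $3$ (three points on $\PP^1$ impose a cubic, not a quadric), so $\mathrm{Tor}_{g-4}(J)_{g-2}$ vanishes for internal-degree reasons. The interesting range is therefore $k\ge 4$, where $J$ is generated in degree $2$ (as $k$ general points of $\PP^{k-2}$ are cut out by quadrics), so the target group sits exactly in the linear strand of $J$ and its vanishing is not formal.

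To prove the vanishing I would resolve $R_C$ over $R_X$. Because $C$ is general $k$-gonal with $W^1_k(C)=\{A\}$ reduced, $C$ meets each ruling $\PP^{k-2}$ of $X$ in $k$ points in general position, so $R_C$ has a minimal $R_X$-free resolution $\mathcal{G}_\bullet$ of length $k-2$, fibrewise the minimal resolution of $k$ general points of $\PP^{k-2}$, whose terms are direct sums of modules of sections of twisted scrollar line bundles $\OO_X(-a)\otimes\pi^*\OO_{\PP^1}(-b)$; here $\mathcal{G}_0=R_X$, $\mathcal{G}_1$ is generated in degree $2$, and by minimality the generators of $\mathcal{G}_i$ lie in internal degrees $\ge i+1$ for $i\ge 1$. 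Resolving each such twisted line bundle over $S$ by the relevant generalized Eagon--Northcott (``scrollar'') complex determined by the balanced splitting type of $\mathcal{E}$, and iterating the mapping cone, produces a possibly non-minimal $S$-free resolution of $R_C$. A count of internal degrees shows that the slot (homological degree $g-k$, internal degree $g-k+1$) receives contributions only from $\mathcal{G}_0$ --- which gives exactly the top of the Eagon--Northcott complex of $R_X$, the $g-k$ scrollar syzygies --- and from those $\mathcal{G}_i$, $i\ge 1$, whose generators sit in the minimal degree $i+1$ (the linear part of the resolution of $k$ general points), and there only through the last linear syzygies of the twisted scrollar line bundles occurring in them. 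The claim is then reduced to showing that all of these last linear syzygies vanish, i.e. that the relevant scrollar modules $\bigoplus_m H^0(\PP^1,\mathrm{Sym}^m\mathcal{E}\otimes\OO_{\PP^1}(c))$ have linear strands of length $<g-k-1$.

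The hard part will be exactly this length estimate. Bopp's examples \cite{bopp} show that $J$ genuinely acquires linear syzygies in the middle of its strand --- non-scrollar linear syzygies of $C$ do appear for $p\approx\lceil\tfrac{g-1}{2}\rceil$ --- so a crude bound will not do: the argument has to be sharp at the extremal position $p=g-k$. Concretely, one must determine, for the positively twisted scrollar modules that actually arise from the linear part of $\mathcal{G}_\bullet$, the precise length of their linear strand over $S$ in terms of the numerical type of the balanced bundle $\mathcal{E}$, and verify that it drops below $g-k-1$ precisely because the gonality is non-maximal, $k\le\tfrac{g+1}{2}$; for maximal gonality no such uniform scroll exists and the estimate would fail, consistent with Schreyer's Conjecture being stated only in the non-maximal range. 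One must also check that no accidental cancellation in the iterated mapping cone spoils the count at the slot $(g-k,\,g-k+1)$; it is here, and in guaranteeing the general position of the $k$ points on each ruling, that the hypotheses that $A$ is the unique reduced point of $W^1_k(C)$ and the unique minimal Clifford-index bundle should enter.
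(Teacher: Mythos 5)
The statement you are proving is stated in the paper as a \emph{conjecture}; the paper itself establishes it only for a \emph{general} $k$-gonal curve (Theorem \ref{goneric}), and by a route entirely different from yours: an induction-free degeneration argument that identifies $g-2k+1$ general pairs of points on $C$ to produce an irreducible nodal curve $D$ of arithmetic genus $2g-2k+1$ and gonality $g-k+1$, reduces to the divisorial case $g=2k-1$ via the Hirschowitz--Ramanan identity $[\mathfrak{Syz}]=(k-1)[\mathfrak{Hur}]$, extends the Eagon--Northcott divisor $\mathcal{EN}$ as a determinantal locus over a partial compactification of the Hurwitz space, and rules out the bad boundary component using curves on $K3$ surfaces. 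Your setup (lower bound from the scroll, reduction to $\mathrm{Tor}^S_{g-k-1}(I_C/I_X,\C)_{g-k+1}=0$, resolution of $R_C$ over $R_X$ by twisted scrollar modules and iterated mapping cones) is essentially Schreyer's original strategy from \cite{sch}, which the paper explicitly records as proving the conjecture for a general $k$-gonal curve only in the range $(k-1)^2<g$.

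The genuine gap is the step you yourself flag as ``the hard part'': the length estimate for the linear strands of the twisted scrollar modules arising from the linear part of $\mathcal{G}_\bullet$. This is not a technical verification left to the reader --- it is the entire content of the problem, and there is concrete evidence that it cannot be carried out as described in the full range $k\leq\frac{g+1}{2}$: if it could, \cite{sch} would not be restricted to $(k-1)^2<g$, and Bopp's examples \cite{bopp} show the non-scrollar linear strand of $J$ is genuinely nonzero in middle homological degrees, so any bound must be sharp exactly at $p=g-k$ and cannot follow from a uniform strand-length estimate on the individual scrollar summands without controlling cancellations in the mapping cone. Two further points would need repair even to get the argument off the ground. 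First, the conjecture concerns an \emph{arbitrary} curve with $W^1_k(C)$ a reduced point, whereas your description of $\mathcal{G}_\bullet$ as ``fibrewise the minimal resolution of $k$ general points of $\PP^{k-2}$'' requires genericity you are not given (and is false at ramification fibres of $|A|$ even for general $C$); the correct statement, due to Schreyer \cite{schreyer1}, is that $R_C$ admits an $R_X$-resolution of Eagon--Northcott type whose twists are governed by the scrollar invariants, which need not be balanced under your hypotheses. Second, the vanishing $b_{p,1}(C,K_C)=0$ for $p>g-k$ is Green's Conjecture for $C$, which is \emph{not} known for an arbitrary curve satisfying the stated hypotheses --- only for general curves in each gonality stratum --- so even that part of your first paragraph is not yet a proof in the generality of the conjecture.
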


\vskip 4pt

The converse statement is straightforward. Indeed, if $W^1_k(C)$ does \emph{not} consist of a reduced single point, then $b_{g-k,1}(C,\omega_C)>g-k$,  see
\cite[Proposition\ 4.10]{SSW}.
The condition $b_{g-k,1}(C,\omega_C)=g-k$ automatically implies the vanishing statements $b_{p,1}(C,\omega_C)=0$, for $p>g-k$.  Conjecture \ref{schrconj1} is known to hold for
a \emph{general} $k$-gonal curve provided $(k-1)^2<g$, see \cite{sch}. An important piece of evidence for the conjecture is the case of general $k$-gonal curves of odd genus $2k-1$.
Such curves form a divisor $\mathfrak{Hur}$ in the moduli space $\cM_{2k-1}$, much studied by Harris and Mumford in \cite{ha-mu}. Combining  results in \cite{hirsch} and those in \cite{V2}, it follows that
Conjecture \ref{schrconj1} holds in this case. Outside this divisorial range, little has been known. The main result of this paper is the following:
\begin{thm} \label{goneric0}
Schreyer's Conjecture holds for a general $k$-gonal curve $C$ of genus $g\geq 2k-1$:
$$b_{g-k,1}(C,\omega_C)=g-k.$$
\end{thm}

In fact we give a Brill-Noether theoretic sufficient condition for Schreyer's conjecture to hold for a given curve. In what follows, $G_d^{1, \mathrm{bpf}}(C)\seq G^1_d(C)$ denotes the subvariety of base point free pencils of degree $d$ on a curve $C$. We establish the following result implying Theorem \ref{goneric0}.

\begin{thm} \label{goneric}
Assume $C$ is a $k$-gonal curve $C$ of genus $g\geq 2k-1$ satisfying bpf-linear growth:
\begin{align*}
\dim G^1_{k+m}(C) &\leq m,  \;\; \text{for $0 \leq m \leq g-2k+1$} \\
\text{and \;}
\dim G^{1,\mathrm{bpf}}_{k+m}(C) &<m, \;\; \text{for $0 < m \leq g-2k+1$} .\end{align*}
Assume there is a unique pencil in $A \in G^1_k(C)$ having simple ramification and with $h^0(C, A^{\otimes 2})=3$. Then Schreyer's Conjecture holds for $C$.
\end{thm}

\vskip 4pt

Part of Theorem \ref{goneric} is that there is a canonical identification
$$K_{g-k,1}(C,\omega_C)\cong \bigwedge^{g-k+1} H^0(C,K_C\otimes A^{\vee})\otimes \mbox{Sym}^{g-k-1} H^0(C,A)\otimes \bigwedge^2 H^0(C,A),$$
where $A$ is the unique degree $k$ pencil on $C$.  \emph{All} the $(g-k)$-th syzygies linear syzygies of the canonical curve $C\seq \PP^{g-1}$ are of \emph{Eagon-Northcott type} and
can be written down explicitly. Precisely, if $(\tau_0,\ldots, \tau_{g-k})$ is a basis of $H^0(C,\omega_C\otimes A^{\vee})$ and $\sigma\in H^0(C,A)$, then the syzygy corresponding to the power
$\sigma^{g-k-1}\in
\mbox{Sym}^{g-k-1} H^0(C,A)$ has the form
$$\sum_{j=0}^{g-k}(-1)^j (\sigma \tau_1)\wedge \ldots \widehat{(\sigma\tau_j)}\wedge \ldots \wedge (\sigma\tau_{g-k})\wedge\bigl\{(\sigma\tau_0)\otimes (\sigma'\tau_j)-
(\sigma'\tau_0)\otimes (\sigma\tau_j)\bigr\}\in \bigwedge^{g-k} H^0(\omega_C)\otimes H^0(\omega_C),$$
where $\sigma'\in H^0(C,A)$ is another section such that $(\sigma,\sigma')$ form a basis of $H^0(C,A)$.

\vskip 4pt

It is tempting to interpolate between and link the two main results of this paper, namely Theorems \ref{arb-gon} and \ref{goneric}, and conjecture that a statement analogous
to Schreyer's Conjecture holds not only for the canonical bundle, but for every sufficiently positive line bundle on $C$. We fix a general $k$-gonal curve $C$ of genus $g\geq 2k-1$ and a line bundle $L$
on $C$ with $\mbox{deg}(L)\geq 2g+k$.
\begin{conjecture}\label{conj3}
If $r=r(L)$, one has $\mathrm{dim}\ K_{r-k,1}(C,L)=r-k.$
\end{conjecture}
We expect that all syzygies in $K_{r-k,1}(C,L)$ are again of Eagon-Northcott type, being induced by the $k$-dimensional scroll induced by the unique pencil $A\in W^1_k(C)$ and which contains the embedded curve $\varphi_L:C\hookrightarrow \PP^{r}$.

\vskip 5pt

The proof of Theorem \ref{goneric} begins in Section \ref{gonericity} with the already mentioned observation that via  \cite{hirsch} and \cite{V2}, a smooth curve $C$ of genus $2k-1$
and gonality $k$ satisfies $b_{k-1,1}(C,\omega_C)=k-1$, provided $W^1_k(C)$ is integral of dimension zero. Consider the Hurwitz space $\mathcal{H}_{2k-1,k}$ of smooth curves of
genus $g$ which are $k$-fold covers of $\PP^1$. We define the \emph{Eagon-Northcott} divisor $\mathcal{EN}$ on $\H_{2k-1,k}$ parametrizing moduli points
$[f:C\rightarrow \PP^1]\in \H_{2k-1,k}$ with
$b_{k-1,1}(C,\omega_C)>k-1$. In other words, points of $\mathcal{EN}$ correspond to canonical curves $C\seq \PP^{g-1}$ having a $(g-k)$-th order linear syzygy which is \emph{not} of
Eagon-Northcott type. We also consider the Brill-Noether type divisor $\mathfrak{BN}$ on $\H_{2k-1,k}$ consisting of points $[f:C\rightarrow \PP^1]$ such that $C$ has an extra pencil of
degree $k$. By the above discussion these two divisors coincide set-theoretically, that is,
$$\mathcal{EN}=\mathfrak{BN}.$$

\vskip 4pt

Now suppose we are no longer in the divisorial case and choose $k\leq \frac{g+1}{2}$. We follow a strategy reminiscent of \cite{aprodu-remarks}.
Starting with a general $k$-gonal curve $C$ of genus $g$, we form the irreducible nodal curve $[D] \in \mm_{2g-2k+1}$ obtained by identifying $g-2k+1$ general pairs of points on $C$.
Clearly $p_a(D)=2g-2k+1$ and $\mbox{gon}(D)\leq g-k+1$, that is, $[D]$ belongs to the closure $\overline{\mathfrak{Hur}}=\mm_{2g-2k+1,g-k+1}^1$ of the Hurwitz divisor, already considered
in \cite{ha-mu}, \cite{hirsch} and \cite{generic-secant}. Let
 $$\pi:\hh_{2g-2k+1,g-k+1}\rightarrow \mm_{2g-2k+1}$$
denote the forgetful map from the space of admissible covers of degree $g-k+1$ compactifying the Hurwitz space $\mathcal{H}_{2g-2k+1,g-k+1}$.
Assuming the curve $C$ we started with is sufficiently general, one checks directly that \emph{set-theoretically} $W^{1}_{g-k+1}(D)$ consists of one point
(that is, $\pi^{-1}([D])$ consists of one admissible cover $[f]$). This point corresponds to the torsion free sheaf on $D$ given by pushing forward the unique degree $k$ pencil on $C$.
In the last section of the paper we show that $[f]\notin \overline{\mathfrak{BN}}$, therefore $[f]\notin \overline{\mathcal{EN}}$.
To conclude $b_{g-k,1}(C,\omega_C)=g-k$, we extend in Section \ref{extending} the determinantal structure of the Eagon-Northcott divisor $\mathcal{EN}$ over a partial compactification of $\H_{2g-2k+1,g-k+1}$
containing the moduli point of $[f]$. In Section \ref{k3}, we then use $K3$ surfaces to show that this extended Eagon-Northcott divisor
does not contain the unique boundary component of $\hh_{2g-2k+1,g-k+1}$ containing $[f]$. Since always
$K_{g-k,1}(C,\omega_C)\hookrightarrow K_{g-k,1}(D, \omega_D)$, this completes the proof of Theorem \ref{goneric}.\footnote{It might be tempting to carry out
this argument at the level of $\mm_{2g-2k+1}$ rather than pass to Hurwitz space. However, the scheme structure of $W^1_{g-k+1}(D)$ is difficult to analyse,
so that $[D]$ may \emph{a priori} be a singular point of
$\mbox{Im}(\pi)=\overline{\mathfrak{Hur}}$. Thus a degenerate version of results in \cite{hirsch}, does not actually lead to a proof of Conjecture \ref{schrconj1}.}

\vskip 4pt

The organisation of the paper is as follows: We first review some background on syzygies of curves in Section \ref{background}. In Section \ref{gonality}, we prove Theorem
\ref{arb-gon}.  We prove Theorem \ref{goneric} in Sections \ref{gonericity}, \ref{extending} and \ref{k3}.

\vskip 3pt

\noindent {\bf Acknowledgments:} We thank Marian Aprodu, Wouter Castryck and Michael Hoff for stimulating conversations related to this circle of ideas. The second author thanks Christian Bopp for bringing Schreyer's conjecture and \cite{SSW}  to his attention, as well as for discussions on the (virtual) Koszul divisor of \cite{bopp}. Above all, we are grateful to Frank-Olaf Schreyer
for generously sharing with us his thoughts concerning his Conjecture \ref{schrconj1}. In particular, the idea of considering the Eagon-Northcott divisor $\mathcal{EN}$, important in the proof
of Theorem \ref{goneric}, is due to him. This work was supported by the DFG Priority Program 1489
\emph{Algorithmische Methoden in Algebra, Geometrie und Zahlentheorie}. The second author was also partially supported by NSF grant DMS-1701245.

\section{Background on Syzygies} \label{background}

We recall a few definitions and collect some basic results on syzygies that will be used throughout the paper.
Let $X$ be a (possibly singular) projective variety and let $L,M \in \text{Pic}(X)$ be line bundles. Consider the graded $S:=\text{Sym}\; H^0(X,L)$-module
$$\Gamma_{X}(M,L):=\bigoplus_{n \in \mathbb{Z}_{\geq 0}} H^0(X,L^{\otimes n}\otimes M). $$

One defines the Koszul cohomology groups $K_{p,q}(X,M;L)$ of $p$-th syzygies of weight $q$  by resolving the module $\Gamma_X(M,L)$ and computes them via the Koszul complex, see \cite{green-koszul}. When $M=\mathcal{O}_X$, we write $K_{p,q}(X,L):=K_{p,q}(X,\OO_X;L)$. The following fact is surely well-known:

\begin{lem}[Semicontinuity] \label{semi}
Let $\pi:\; \mathcal{X} \to S$ be a flat, projective morphism of schemes over an integral base. Let $\mathcal{L} \in \mathrm{Pic}(\mathcal{X})$ be a line bundle such that $h^0(X_s, \mathcal{L}_s)=c$, for each $s \in S$. Let $\mathcal{M} \in \mathrm{Pic}(\mathcal{X})$ be a second line bundle, and assume $$h^0(X_s, \mathcal{L}_s^{\otimes (q-1)} \otimes \mathcal{M}_s )=r_{1},\; h^0(X_s, \mathcal{L}_s^{\otimes q}\otimes \mathcal{M}_s )=r_{2},\; h^0(X_s, \mathcal{L}_s^{\otimes (q+1)}\otimes \mathcal{M}_s )=r_{3}$$ are also independent of $s \in S$. Then the function
\begin{align*}
\psi : s &\mapsto \dim K_{p,q}(X_s, \mathcal{M}_s; \mathcal{L}_s)
\end{align*}
is upper semicontinuous on $S$.
\end{lem}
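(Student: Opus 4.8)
The plan is to compute the Koszul cohomology $K_{p,q}(X_s, \mathcal{M}_s; \mathcal{L}_s)$ as the cohomology of a complex of vector bundles on $S$ (or at least of coherent sheaves that are locally free near the fibre in question), and then invoke the standard fact that the rank of the cohomology of a complex of vector bundles is upper semicontinuous. Concretely, for a fixed degree $q$, the relevant piece of the Koszul complex computing $K_{p,q}$ is
$$\wedge^{p+1} H^0(X_s,\mathcal{L}_s)\otimes H^0(X_s,\mathcal{L}_s^{\otimes(q-1)}\otimes\mathcal{M}_s)\longrightarrow \wedge^{p} H^0(X_s,\mathcal{L}_s)\otimes H^0(X_s,\mathcal{L}_s^{\otimes q}\otimes\mathcal{M}_s)\longrightarrow \wedge^{p-1} H^0(X_s,\mathcal{L}_s)\otimes H^0(X_s,\mathcal{L}_s^{\otimes(q+1)}\otimes\mathcal{M}_s),$$
and $K_{p,q}(X_s,\mathcal{M}_s;\mathcal{L}_s)$ is its cohomology at the middle term.

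The first step is to globalise this over $S$. By the hypothesis that $h^0(X_s,\mathcal{L}_s)=c$ is constant and $\pi$ is flat and projective, Grauert's theorem gives that $\pi_*\mathcal{L}$ is locally free of rank $c$ on $S$, and likewise $\pi_*(\mathcal{L}^{\otimes(q-1)}\otimes\mathcal{M})$, $\pi_*(\mathcal{L}^{\otimes q}\otimes\mathcal{M})$, $\pi_*(\mathcal{L}^{\otimes(q+1)}\otimes\mathcal{M})$ are locally free of ranks $r_1,r_2,r_3$, and in each case formation of the pushforward commutes with base change (so the fibre of $\pi_*\mathcal{L}$ at $s$ is genuinely $H^0(X_s,\mathcal{L}_s)$, etc.). There is then a globally defined complex of locally free sheaves on $S$,
$$\wedge^{p+1}(\pi_*\mathcal{L})\otimes \pi_*(\mathcal{L}^{\otimes(q-1)}\otimes\mathcal{M})\longrightarrow \wedge^{p}(\pi_*\mathcal{L})\otimes \pi_*(\mathcal{L}^{\otimes q}\otimes\mathcal{M})\longrightarrow \wedge^{p-1}(\pi_*\mathcal{L})\otimes \pi_*(\mathcal{L}^{\otimes(q+1)}\otimes\mathcal{M}),$$
whose maps are the sheafified Koszul differentials (built from the multiplication maps $\pi_*\mathcal{L}\otimes\pi_*(\mathcal{L}^{\otimes j}\otimes\mathcal{M})\to\pi_*(\mathcal{L}^{\otimes(j+1)}\otimes\mathcal{M})$) and whose restriction to the fibre over $s$ is exactly the Koszul complex above; the base-change statements guarantee this identification of the restricted complex with the fibrewise one, since restricting the global complex to $s$ is right exact and the terms restrict correctly.

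The second step is to conclude: for a complex $A\xrightarrow{\alpha} B\xrightarrow{\beta} C$ of locally free sheaves, $\dim\bigl(\ker\beta_s/\mathrm{im}\,\alpha_s\bigr)=\mathrm{rank}\,B-\mathrm{rank}\,\alpha_s-\mathrm{rank}\,\beta_s$, and since the rank of a map of vector bundles is lower semicontinuous (it drops on closed sets, being governed by vanishing of minors), the function $s\mapsto \dim K_{p,q}(X_s,\mathcal{M}_s;\mathcal{L}_s)$ is upper semicontinuous. Here one uses that the restriction of the global complex to $s$ has middle cohomology equal to $K_{p,q}(X_s,\mathcal{M}_s;\mathcal{L}_s)$ — this is precisely what the commutation of pushforward with base change secures, and it is the one point that genuinely needs the constancy hypotheses on all four $h^0$'s.

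The main obstacle, and the only subtle point, is exactly this compatibility: one must be careful that the fibre of the globally defined Koszul complex at $s$ really is the Koszul complex of the fibre, rather than merely mapping to it. Without the hypotheses ensuring that each $\pi_*(\mathcal{L}^{\otimes j}\otimes\mathcal{M})$ is locally free with fibre $H^0(X_s,\mathcal{L}_s^{\otimes j}\otimes\mathcal{M}_s)$, the restriction maps could fail to be isomorphisms and the semicontinuity conclusion could break; granting those hypotheses, everything reduces to the elementary semicontinuity of ranks of bundle maps. I would work locally on $S$ to trivialise the four bundles, which turns the whole argument into the finite-dimensional statement and makes the final invocation of lower semicontinuity of matrix rank transparent.
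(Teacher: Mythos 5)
Your proposal is correct and follows essentially the same route as the paper: Grauert's theorem makes the four pushforwards locally free with fibres the corresponding $H^0$'s, the Koszul complex is then a complex of locally free sheaves over $S$ whose fibre at $s$ computes $K_{p,q}(X_s,\mathcal{M}_s;\mathcal{L}_s)$, and the dimension of the middle cohomology equals $r_2\binom{c}{p}$ minus the ranks of the two restricted differentials, which are lower semicontinuous. The paper simply works over an affine chart $S=\mathrm{Spec}(R)$ rather than phrasing the base-change compatibility separately, but the content is identical.
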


We collect  some results on syzygies of curves which, taken together, reduce Theorem  \ref{arb-gon} to the extremal case of line bundles of degree $d=2g-1+\mbox{gon}(C)$. We quote from \cite{aprodu-nagel}, Theorem 4.27:
\begin{lem}\label{lem1}
Let $C$ be a smooth curve of genus $g$ and $L$ a line bundle of degree $d \geq g$ with $h^1(C, L)=0$. Assume $K_{p,1}(C,L)=0$. Then $K_{p+1,1}(C,L(x))=0$, for any point $x \in C$ such that $L(x)$ is base point free.
\end{lem}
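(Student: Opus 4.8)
The plan is to relate the Koszul cohomology of $L$ to that of $L(x)$ via the short exact sequence of sheaves on $C$ that compares the two line bundles, and then exploit the mapping-cone/long-exact-sequence machinery for Koszul cohomology. Concretely, twisting the structure sequence $0 \to \OO_C(-x) \to \OO_C \to \OO_x \to 0$ by $L(x)$ gives
\[
0 \longrightarrow L \longrightarrow L(x) \longrightarrow \C_x \longrightarrow 0,
\]
and the standard theory (see \cite{green-koszul}) produces a long exact sequence relating $K_{p,q}(C,L)$, $K_{p,q}(C,L(x))$, and the Koszul cohomology of the point with coefficients depending on $H^0(C,L(x))$. Since $h^1(C,L)=0$ and $\deg L \geq g$, the degree-$d$ bundle $L$ is nonspecial, so $h^0(C,L(x)) = h^0(C,L)+1$; the extra section is what accounts for the index shift from $p$ to $p+1$.

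First I would set $r := h^0(C,L)-1$, fix a basis of $H^0(C,L)$ together with a section $s \in H^0(C,L(x))$ not vanishing identically, so that $H^0(C,L(x)) = H^0(C,L) \oplus \C\cdot s$; here the hypothesis that $L(x)$ is base point free guarantees $H^0(C,L(x))$ generates enough of the section ring and lets us run the Koszul complex over the larger polynomial ring. Next I would write down the relevant piece of the long exact sequence in Koszul cohomology induced by the sheaf sequence above: it interleaves $K_{p+1,1}(C,L(x))$ with $K_{p+1,1}(C,L)$ on one side and with a term built from $K_{p,0}$ or $K_{p,1}$ of the skyscraper $\C_x$ on the other. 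The skyscraper contributions are computable directly — they are essentially exterior/symmetric powers of $H^0(L(x))$ modulo those of $H^0(L)$ — and the key point is that the connecting maps are controlled by multiplication by the section $s$. The hypothesis $K_{p,1}(C,L)=0$ kills the relevant incoming term, and $h^1(C,L)=0$ (equivalently, the nonspeciality, which also forces $K_{p,q}(C,L)=0$ for $q\geq 2$ in the range considered) kills the obstruction term coming from higher weight. One then reads off $K_{p+1,1}(C,L(x))=0$.

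The main obstacle I anticipate is bookkeeping: making the long exact sequence precise enough that the index shift $p \mapsto p+1$ and the weight are tracked correctly, and verifying that the relevant connecting homomorphism is surjective (or the adjacent term vanishes) so that the vanishing of $K_{p,1}(C,L)$ genuinely propagates. This is where the base-point-freeness of $L(x)$ and the nonspeciality of $L$ must both be used — base-point-freeness to ensure the added section behaves like an independent variable in the Koszul complex, nonspeciality to discard the weight-$2$ terms. Since this is exactly Lemma 4.1 of \cite{aprodu-higher}, I would either reproduce that argument or simply cite it; in a self-contained write-up the cleanest route is the mapping cone of the inclusion $L \hookrightarrow L(x)$, which immediately yields the exact sequence
\[
K_{p+1,1}(C,L) \longrightarrow K_{p+1,1}(C,L(x)) \longrightarrow K_{p,1}(C,L)
\]
(up to identifying the point-Koszul terms), whence the first and third terms vanish and the middle one does too.
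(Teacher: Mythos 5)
The paper does not prove this lemma at all: it is quoted verbatim as Lemma 4.1 of \cite{aprodu-higher}, so your fallback of simply citing that reference is exactly what the authors do. The problem is with the self-contained argument you sketch. The sheaf sequence $0 \to L \to L(x) \to \C_x \to 0$ does not induce the three-term exact sequence you display. Long exact sequences in Koszul cohomology come from short exact sequences of graded modules over a \emph{fixed} symmetric algebra, whereas $K_{p+1,1}(C,L)$ and $K_{p,1}(C,L)$ are computed over $\mathrm{Sym}\, H^0(C,L)$ and $K_{p+1,1}(C,L(x))$ over the strictly larger ring $\mathrm{Sym}\, H^0(C,L(x))$. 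If you instead regard $\Gamma_C(L(x))=\bigoplus_n H^0(C,L(x)^{\otimes n})$ as a $\mathrm{Sym}\, H^0(C,L)$-module, its comparison with $\Gamma_C(L)$ is not by a one-dimensional skyscraper: in degree $n$ the discrepancy between $H^0(L^{\otimes n}(nx))$ and $H^0(L^{\otimes n})$ has dimension $n$ (by Riemann--Roch and nonspeciality), so the ``point-Koszul terms'' you propose to identify away grow with the degree and do not simply produce the shift $p\mapsto p+1$. The actual argument of \cite{aprodu-higher} runs instead through the codimension-one subspace $H^0(C,L)\subset H^0(C,L(x))$ and the corresponding decomposition of the exterior powers in the Koszul complex, which is where base point freeness of $L(x)$ and $h^1(C,L)=0$ genuinely enter; your sketch does not supply the comparison of the two modules that this requires.

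There is a second, independent gap: even granting your displayed sequence, you would need $K_{p+1,1}(C,L)=0$ on the left, and this does not follow from the hypothesis $K_{p,1}(C,L)=0$. The implication $K_{p,1}=0\Rightarrow K_{p+1,1}=0$ (Corollary 2.13 of \cite{aprodu-nagel}, quoted in the paper immediately after this lemma) requires $L$ to be globally generated, and here only $L(x)$ is assumed base point free; for instance with $\deg L=g$ and $h^0(C,L)=1$ the bundle $L$ is nowhere near globally generated. Finally, nonspeciality of $L$ kills $K_{p,q}(C,L)$ only for $q\geq 3$, not the weight-two row, so the claim that it ``discards the weight-$2$ terms'' also needs justification.
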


It is standard, see e.g. \cite{aprodu-nagel}, Corollary 2.13, that if  $L\ncong \OO_C$ is a globally generated line bundle on  a smooth curve $C$, if $K_{p,1}(C,L)=0$, then $K_{p+1,1}(C,L)=0$. Accordingly, there are several natural invariants which one can read directly off the Betti table of an embedded curve $C\stackrel{|L|}\hookrightarrow \PP^{r(L)}$ and which measure the length of the linear and the quadratic strand respectively:
$$\hspace{1.3cm} \ell_1(C,L):=\mbox{max} \bigl\{p\in \mathbb N_{>0}: b_{p,1}(C,L)\neq 0 \bigr\} \ \ \ \mbox{ and }$$
$$\ell_2(C,L):=\mbox{min} \bigl \{p\in \mathbb N_{>0}: b_{p,2}(C, L)\neq 0 \bigr\}.$$

Recalling that $K_{p,q}(C,L)=0$ for $p\geq r(L)$, the invariants $\ell_1(C,L)$ are encoded in the more classical properties $(N_p)$ and $(M_q)$ defined in \cite{green-lazarsfeld-projective}.
Precisely, $\ell_2(C,L)$ is the smallest integer such that $(C,L)$ fails property $\bigl(N_{\ell_2(C,L)}\bigr)$, whereas $\ell_1(C,L)$ is the smallest integer such that $L$ fails property $\bigl(M_{r(L)-\ell_1(C,L)}\bigr)$.

\vskip 3pt

\section{The Effective Gonality Conjecture for generic curves} \label{gonality}
We start by proving Theorem \ref{odd}. It turns out that our proof of the generic Green-Lazarsfeld Secant Conjecture \cite{generic-secant} takes us a long distance towards finding a complete solution.

\begin{proof}[Proof of Theorem \ref{odd}]
Let $C$ be a curve of genus $2n+1$ and gonality $n+2$. Then using e.g. \cite[Remark 6.3]{hirsch}, we observe that $\mbox{Cliff}(C)=n$, that is, $C$ has maximal Clifford index as well. We need to prove that for any line bundle $L\in \mbox{Pic}(C)$ of degree at least $5n+3$, we have $K_{i,1}(C,L)=0$ for $i \geq h^0(C, L)-n-2$. We may assume $n \geq 2$ and as explained in the previous section, it is enough to prove that for any line bundle $L\in \mbox{Pic}^{5n+3}(C)$, we have $K_{2n+1,1}(C,L)=0$.

\vskip 3pt

Theorem 1.4 of \cite{generic-secant} establishes the following equivalence for any line bundle $M\in \mbox{Pic}^{4n+2}(C)$:
$$ K_{n-1,2}(C,M) \neq 0 \Longleftrightarrow M-K_C \in C_{n+1}-C_{n-1}.$$  For any line bundle $M \in \text{Pic}^{4n+2}(C)$ one has cf. \cite[formula (8)]{generic-secant}
$$\dim K_{n,1}(C,M)=\dim K_{n-1,2}(C,M).$$ Thus, for any $M \in \text{Pic}^{4n+2}(C)$, the equivalence
$$ K_{n,1}(C,M) \neq 0 \Longleftrightarrow M-K_C \in C_{n+1}-C_{n-1}$$
holds. Using Lemma \ref{lem1} again, it thus suffices to show that for any line bundle $L$ of degree $5n+3$, there exists an effective divisor $D \in C_{n+1}$ such that
$$ L-D-K_C \notin C_{n+1}-C_{n-1}.$$ Suppose this were not the case, that is,
$$L-K_C-C_{n+1} \seq  C_{n+1}-C_{n-1}.$$ Then for every $D \in C_{n+1}$ there exists a divisor $E \in C_{n+1}$ such that
$H^1(C,L(-D-E)) \neq 0$, that is, $D+E$ is an element of the (determinantal) secant variety $V^{2n+1}_{2n+2}(L)$ of effective divisors failing to impose independent conditions on $|L|$. In particular,
$$ \dim V^{2n+1}_{2n+2}(L) \geq n+1,$$
which is one higher than the expected dimension $n$. We observe that  the morphism
\begin{align*}
\psi : \; V^{2n+1}_{2n+2}(L) & \to C_{n-1}, \\
A &\mapsto K_C-L+A
\end{align*}
is well-defined, since $h^0(C,K_C-L+A)=1$, for $\mbox{gon}(C)>n-1$.  Let $I$ be any component of $V^{2n+1}_{2n+2}(L)$ of dimension $n+1$ and set $r:=n-1-\dim \ \psi(I)$. Then $\psi_{|_I}$ must have fibres of dimension at least $2+r$.
As all divisors in the inverse image $\psi^{-1}(B)$ are clearly linearly equivalent, we have $h^0(C, A)\geq 3+r$ for
all $A \in V^{2n+1}_{2n+2}(L)$ such that $\psi(A)=B \in \psi(I)$. By Riemann--Roch, this implies $h^1(C, A) \geq 1+r$, or $h^0(C, K_C-A)=h^0(2K_C-L-B)\geq 1+r$. The latter inequality holds for \emph{any} effective divisor $B \in \psi(I)$, so we must have
$$\dim |2K_C-L| \geq r+\dim \psi(I)=n-1.$$ This implies $h^1(C,2K_C-L) \geq 3$, or equivalently $L-K_C \in W^2_{n+3}(C)$. But then $\text{Cliff}(C) \leq n-1$ (if $n=2$, then compute the Clifford index of $2K_C-L$ rather than $L-K_C$). Since we have $\text{Cliff}(C)=n$, this is a contradiction.
\end{proof}

The proof of  Theorem \ref{odd} gives a characterisation of those line bundles $L\in \mbox{Pic}^{2g-2+\mathrm{gon}(C)}(C)$, such that  $K_{h^0(L)-\mathrm{gon}(C),1}(C,L)\neq 0$, in
the case where $C$ has odd genus and maximal gonality.
\begin{prop} \label{failure}
Let $C$ be a smooth curve of odd genus $2n+1$ and gonality $n+2$. Let $L \in \mathrm{Pic}^{5n+2}(C)$ be such that $K_{2n,1}(C,L) \neq 0$.
Then $L-K_C \in W^1_{n+2}(C).$
\end{prop}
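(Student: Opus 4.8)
The plan is to mimic the proof of Theorem \ref{odd}, but now working one degree lower, with $L \in \mathrm{Pic}^{5n+2}(C)$ instead of degree $5n+3$. First I would observe that $\mathrm{Cliff}(C)=n$, as in the proof of Theorem \ref{odd}. The nonvanishing $K_{2n,1}(C,L)\neq 0$ together with Lemma \ref{lem1} should be used contrapositively: if for some point $x\in C$ the bundle $L(-x)$ were base point free and satisfied $K_{2n,1}(C,L(-x))=0$, then we could not conclude anything directly, so instead I would peel off points going downward. More precisely, the contrapositive of Lemma \ref{lem1} says that if $K_{p+1,1}(C,L(x))\neq 0$ then $K_{p,1}(C,L)\neq 0$ (provided $L(x)$ is bpf, $\deg L \geq g$, $h^1(L)=0$). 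Applying this once with $p=2n-1$: since $\deg L = 5n+2 \geq 2g-1 = 4n+1$ forces $h^1(L)=0$ and genericity of a chosen $x$ makes things base point free, we should be able to reduce to analysing $K_{2n-1,1}(C,M)$ for $M=L(-x)$ — wait, this goes the wrong way. Instead the natural move is: for \emph{every} $x\in C$ with $L(-x)$ bpf, $K_{2n,1}(C,L)\neq 0$ does not itself give $K_{2n-1,1}(C,L(-x))\neq 0$; rather, the useful direction is to add points. So I would argue: pick a general effective divisor $D\in C_{n+1}$; then $L(D)\in \mathrm{Pic}^{6n+3}(C)$ has $K_{2n,1}(C,L)\neq 0 \Rightarrow K_{3n+1,1}(C,L(D))\neq 0$ by $n+1$ applications of Lemma \ref{lem1}. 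But $h^0(L(D))=5n+3$, so $3n+1 = h^0(L(D)) - (2n+2)$, which is \emph{above} the gonality threshold $h^0-(n+2)$; this is too strong and would contradict Theorem \ref{odd}-type vanishing — hence actually this route proves the statement by contradiction, so I must be careful to only add one point.

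Here is the cleaner approach. Add a single general point: $K_{2n,1}(C,L)\neq 0$ implies $K_{2n+1,1}(C,L(x))\neq 0$ for general $x$, where $L(x)\in \mathrm{Pic}^{5n+3}(C)$ and $L(x)$ is base point free. Now invoke the proof of Theorem \ref{odd}: there it was shown (via \cite{generic-secant}, the equivalence $K_{n,1}(C,M)\neq 0 \Leftrightarrow M-K_C\in C_{n+1}-C_{n-1}$, and Lemma \ref{lem1}) that $K_{2n+1,1}(C,L(x))\neq 0$ forces $L(x)-K_C-D' \in C_{n+1}-C_{n-1}$ for every $D'\in C_{n+1}$, i.e. $L(x)-K_C-C_{n+1}\subseteq C_{n+1}-C_{n-1}$, and the ensuing secant-variety dimension count showed $\dim V^{2n+1}_{2n+2}(L(x))\geq n+1$ and ultimately $L(x)-K_C\in W^2_{n+3}(C)$, contradicting $\mathrm{Cliff}(C)=n$. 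So such nonvanishing for $L(x)$ of degree $5n+3$ is simply impossible — which tells me that in degree $5n+2$ I should \emph{not} add a point but run the argument directly, adjusting the numerics: by \cite[formula (8)]{generic-secant}-type identities, relate $K_{2n,1}(C,L)$ in degree $5n+2$ to a Koszul group in complementary degree. Concretely, with $\deg L = 5n+2$, we have $h^0(L) = 5n+2 - 2n = 3n+2$ and $h^1(L)=0$, so $p=2n$ sits at $h^0(L)-(n+2)$, the extremal slot. The duality/relation I want is between $K_{2n,1}(C,L)$ and $K_{h^0(L)-2-2n,2}(C,L) = K_{n,2}(C,L)$, valid because $h^1(L)=0$; so $K_{2n,1}(C,L)\neq 0 \Leftrightarrow K_{n,2}(C,L)\neq 0$. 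Then I would peel off points from $L$ down to a bundle $M$ of degree $4n+2$: choosing a general $E\in C_n$, Lemma \ref{lem1} (used downward, which requires care about which direction it goes) — actually I'd instead add $E\in C_n$ to get to degree... no: to reach degree $4n+2$ from $5n+2$ I must \emph{subtract} $n$ points, and Lemma \ref{lem1} goes from $L$ to $L(x)$, i.e. upward in degree. So the correct statement to use is its logical converse for the contrapositive: if $M := L(-E)$ with $E\in C_n$ general satisfies $K_{n,1}(C,M)=0$ and $M, M(x_1),\ldots$ are all bpf, then $K_{n+i,1}(C,M(x_1+\cdots+x_i))=0$, and in particular $K_{2n,1}(C,L)=0$ — contradiction. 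So we learn: for \emph{every} general $E\in C_n$, $K_{n,1}(C,L(-E))\neq 0$.

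Now apply the degree-$4n+2$ equivalence from \cite{generic-secant} quoted inside the proof of Theorem \ref{odd}: for $M\in \mathrm{Pic}^{4n+2}(C)$, $K_{n,1}(C,M)\neq 0 \Leftrightarrow M-K_C\in C_{n+1}-C_{n-1}$. With $M = L(-E)$, this gives $L-E-K_C \in C_{n+1}-C_{n-1}$ for every general $E\in C_n$, hence $L-K_C-C_n \subseteq C_{n+1}-C_{n-1}$. Equivalently, for every $E\in C_n$ there is $F\in C_{n+1}$ with $h^1\bigl(C, L(-E-F)\bigr)\neq 0$, i.e. $E+F \in V^{2n}_{2n+1}(L)$, the secant variety of divisors of degree $2n+1$ failing to impose independent conditions on $|L|$ (note $h^0(L)=3n+2$, so $2n+1$ points imposing only $2n$ conditions is the first nontrivial secant locus; its expected dimension is $2n+1 - 2 = 2n-1$, wait — the expected codimension of $V^{s-1}_{s}$ in $C_s$ is $h^0(L) - \deg + \ldots$; I would recompute, but morally we get $\dim V^{2n}_{2n+1}(L) \geq n$, one above expected). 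Then I'd run exactly the dimension-count argument from the proof of Theorem \ref{odd}: the map $A\mapsto K_C - L + A$ from a top-dimensional component $I$ of this secant locus into $C_{n-1}$ (well-defined since $h^0(K_C-L+A)=1$ because $\mathrm{gon}(C)>n-1$), fibre-dimension analysis, Riemann--Roch, conclude $\dim|2K_C-L|\geq n-2$, hence $h^1(2K_C-L)\geq 2$, i.e. $L-K_C\in W^1_{n+?}(C)$. Chasing the degree: $\deg(L-K_C) = 5n+2 - (4n) = n+2$, and $h^1(2K_C-L)\geq 2 \Leftrightarrow h^0(K_C-(L-K_C))\geq 2 \Leftrightarrow L-K_C$ is special with index of speciality $\geq 2$, which by Riemann--Roch on the degree-$(n+2)$ bundle $L-K_C$ gives $h^0(L-K_C)\geq 2$, i.e. $L-K_C\in W^1_{n+2}(C)$, exactly the claim. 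The main obstacle I anticipate is getting every numerical index and dimension count right in this shifted-by-one situation — in particular, verifying that $L-K_C-C_n\subseteq C_{n+1}-C_{n-1}$ genuinely forces the secant locus $V^{2n}_{2n+1}(L)$ to have dimension exceeding the expected one by precisely the right amount, and that the Clifford-index contradiction in Theorem \ref{odd} is replaced here by the (weaker, non-contradictory) conclusion $L-K_C\in W^1_{n+2}(C)$ rather than something stronger; care is needed because the argument must \emph{stop} at $W^1$ and not over-conclude.
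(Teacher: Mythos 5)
Your final approach is the paper's own: the proof of Proposition \ref{failure} is literally ``run the proof of Theorem \ref{odd} with the degree lowered by one'', i.e.\ use Lemma \ref{lem1} to deduce $K_{n,1}(C,L(-E))\neq 0$ for every general $E\in C_n$, convert this via the degree-$4n+2$ equivalence from \cite{generic-secant} into $\dim V^{2n}_{2n+1}(L)\geq n$, and then run the secant-variety dimension count through the map $\psi:A\mapsto K_C-L+A$. One side remark: the assertion in your ``cleaner approach'' paragraph that $K_{2n,1}(C,L)\neq 0$ implies $K_{2n+1,1}(C,L(x))\neq 0$ is the wrong direction of Lemma \ref{lem1} (nonvanishing propagates downward in degree, not upward), but since you discard that route it does no damage.

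The genuine problem sits in the last step, exactly where you anticipated trouble. The dimension count does \emph{not} give $\dim|2K_C-L|\geq n-2$; it gives $\dim|2K_C-L|\geq n-1$, i.e.\ $h^0(C,2K_C-L)\geq n$, the \emph{same} bound as in Theorem \ref{odd}. The bound does not drop by one even though $\dim I$ drops from $n+1$ to $n$: the fibres of $\psi|_I$ now have dimension $\geq 1+r$ instead of $\geq 2+r$, so $h^0(C,A)\geq 2+r$, but $\deg A=2n+1$ gives $\chi(A)=1$ instead of $2$, so $h^1(C,A)\geq 1+r$ exactly as before and $\dim|2K_C-L|\geq r+\dim\psi(I)=n-1$. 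This matters: with your bound $h^0(2K_C-L)\geq n-1$ and $\chi(2K_C-L)=n-2$ you only obtain $h^1(2K_C-L)\geq 1$, hence $h^0(L-K_C)\geq 1$, which is vacuous. Moreover your closing Serre-duality/Riemann--Roch chain is garbled: $h^1(2K_C-L)=h^0\bigl(K_C-(2K_C-L)\bigr)=h^0(L-K_C)$ \emph{directly}, so $h^1(2K_C-L)\geq 2$ already \emph{is} the conclusion; your detour through ``index of speciality of $L-K_C$ at least $2$'' is instead the statement $h^1(L-K_C)\geq 2$, and Riemann--Roch on the degree-$(n+2)$ bundle $L-K_C$ then yields only $h^0(L-K_C)\geq 4-n$, which is empty for $n\geq 3$. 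With the corrected bound $h^0(2K_C-L)\geq n$ and the direct duality $h^0(L-K_C)=h^1(2K_C-L)\geq n-(n-2)=2$, the argument closes and coincides with the proof in the paper.
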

\begin{proof}
Following the proof of Theorem \ref{odd}, we obtain $\dim V^{2n}_{2n+1}(L) \geq n$. By studying the morphism
$$
\psi: V^{2n}_{2n+1}(L) \to C_{n-1}, \ \  \
A \mapsto K_C-L+A.
$$
and arguing as in Theorem \ref{odd}, we are again led to the statement $h^0(C, 2K_C-L) \geq n$. The Riemann--Roch theorem gives $h^0(C, L-K_C) \geq 2$, as required.
\end{proof}

We shall prove Theorem \ref{arb-gon} by induction on the genus, fixing the gonality. To perform the induction step, let $C$ be a smooth
genus $g$ curve of gonality $k$ and denote by $f:C \to \PP^1$ the induced degree $k$ cover. We assume that $C$ verifies the Effective Gonality Conjecture.
Let $p \in C$ be a branch point
of $f$, and consider the stable curve $X=C \cup_p E$ obtained by glueing a smooth, genus $1$ curve at $p$. A standard argument with admissible covers or limit linear series shows that $X$ is a limit of smooth $k$-gonal curves of genus $g+1$, see \cite[\S 3.G]{ha-mu}.
\begin{prop} \label{elliptic}
Let $X=C \cup_p E$ be the genus $g+1$ stable curve as above and $L$ a line bundle on $X$ with $\deg (L_{C})=2g+k$ and $\deg (L_{E})=1$. Then, for a general point $q \in E \setminus \{p\}$, we have $$K_{g,1}(X,L(-q))=0.$$ Further, for such a point, $h^1(X,L(-q))=h^1(X,L^{\otimes 2}(-2q))=0$.
\end{prop}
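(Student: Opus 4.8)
The plan is to reduce the statement about the reducible curve $X=C\cup_p E$ to the hypothesis that $C$ satisfies the Effective Gonality Conjecture (Conjecture \ref{secant-conj}), applied to the line bundle $L_C(-p)$ on $C$, which has exactly the extremal degree $2g-1+k$.

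First I would settle the cohomological claims. Writing $\iota_C\colon C\hookrightarrow X$, $\iota_E\colon E\hookrightarrow X$ for the two components, the "twist down at the node" sequence reads
$$0\to \iota_{C*}\bigl(L_C(-p)\bigr)\to L(-q)\to \iota_{E*}\bigl(L_E(-q)\bigr)\to 0,$$
and similarly for $L^{\otimes 2}(-2q)$. Since $q\in E\setminus\{p\}$ is general, $L_E(-q)$ and $L_E^{\otimes 2}(-2q)$ are \emph{nontrivial} line bundles of degree $0$ on the elliptic curve $E$ (each triviality imposes finitely many conditions on $q$), so all their cohomology vanishes. As $\deg L_C(-p)=2g-1+k\geq 2g-1$ and $\deg L_C^{\otimes 2}(-p)=4g+2k-1\geq 2g-1$, both have no higher cohomology on $C$. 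Taking cohomology in the two sequences yields, via restriction to $C$, the identifications
$$H^0\bigl(X,L(-q)\bigr)\cong H^0\bigl(C,L_C(-p)\bigr),\qquad H^0\bigl(X,L^{\otimes 2}(-2q)\bigr)\cong H^0\bigl(C,L_C^{\otimes 2}(-p)\bigr),$$
and simultaneously $h^1(X,L(-q))=h^1(E,L_E(-q))=0$ and $h^1(X,L^{\otimes 2}(-2q))=h^1(E,L_E^{\otimes 2}(-2q))=0$, which is the last sentence of the Proposition. In particular $h^0(X,L(-q))=g+k$, so $r(L(-q))=g+k-1$.

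Next I would compare the Koszul complex computing $K_{g,1}(X,L(-q))$ with the one computing $K_{g,1}(C,L_C(-p))$. Put $V:=H^0(X,L(-q))\cong H^0(C,L_C(-p))$. Both $K$-groups are the cohomology at the middle term of a three-term complex whose first two terms are $\bigwedge^{g+1}V$ and $\bigwedge^{g}V\otimes V$, linked by the \emph{same} Koszul differential (this differential depends only on $V$), and whose third term is $\bigwedge^{g-1}V\otimes H^0(X,L^{\otimes 2}(-2q))$ respectively $\bigwedge^{g-1}V\otimes H^0(C,L_C^{\otimes 2}(-2p))$. The key observation is that a product of two sections of $L(-q)$ on $X$ corresponds, under the identifications above, to a product of two sections of $L_C(-p)$ on $C$, hence vanishes to order at least two at $p$; thus the multiplication map $V\otimes V\to H^0(X,L^{\otimes 2}(-2q))\cong H^0(C,L_C^{\otimes 2}(-p))$ factors through the inclusion $H^0(C,L_C^{\otimes 2}(-2p))\hookrightarrow H^0(C,L_C^{\otimes 2}(-p))$ and there coincides with the multiplication $H^0(C,L_C(-p))^{\otimes 2}\to H^0(C,L_C^{\otimes 2}(-2p))$ that governs the second differential for $(C,L_C(-p))$. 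Therefore the last differential of the $X$-complex is obtained from that of the $(C,L_C(-p))$-complex by post-composing with an injection, so the two complexes have the same image of the first differential and the same kernel of the second, and hence
$$K_{g,1}\bigl(X,L(-q)\bigr)\ \cong\ K_{g,1}\bigl(C,L_C(-p)\bigr).$$

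Finally, $L_C(-p)$ has degree $2g-1+k=2g-1+\mathrm{gon}(C)$ and $h^0(C,L_C(-p))=g+k$, so the Effective Gonality Conjecture for $C$, which holds by hypothesis, gives precisely $K_{h^0(C,L_C(-p))-k,\,1}(C,L_C(-p))=K_{g,1}(C,L_C(-p))=0$; combined with the isomorphism above this yields $K_{g,1}(X,L(-q))=0$. The only genuinely delicate point is the Koszul-complex comparison: one must verify that the restriction isomorphisms on spaces of sections are compatible with the multiplication maps and with the gluing at the node, which is exactly where the vanishing of sections at $p$ — forced by the choice of $q$ making $L_E(-q)$ nontrivial — is used. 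Everything else is bookkeeping with Riemann--Roch on $C$ and on $E$.
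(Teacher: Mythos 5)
Your proposal is correct and follows essentially the same route as the paper: the Mayer--Vietoris sequences identifying $H^0(X,L(-q))$ with $H^0(C,L_C(-p))$ and $H^0(X,L^{\otimes 2}(-2q))$ with $H^0(C,L_C^{\otimes 2}(-p))$, the observation that the second Koszul differential for $X$ factors through the injection $H^0(C,L_C^{\otimes 2}(-2p))\hookrightarrow H^0(C,L_C^{\otimes 2}(-p))$, and the reduction to $K_{g,1}(C,L_C(-p))=0$ from the inductive hypothesis. The paper phrases the comparison as a commutative diagram with $\beta$ an isomorphism and $\gamma$ injective, concluding exactness of the bottom row from exactness of the top; your version upgrades this to the (equally valid) isomorphism $K_{g,1}(X,L(-q))\cong K_{g,1}(C,L_C(-p))$, which is a cosmetic rather than substantive difference.
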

\begin{proof}
We have the Mayer-Vietoris sequence on $X$
$$0 \longrightarrow L_{C}(-p) \longrightarrow L(-q) \longrightarrow L_{E}(-q) \longrightarrow 0.$$
For a general point $q \in E \setminus \{p\}$, we have $h^0(E,L_E^{\otimes j}(-jq))=h^1(E,L_E^{\otimes j}(-jq))=0$ for $j=1,2$, which implies $h^1(X,L(-q))=h^1(X,L^{\otimes 2}(-2q))=0$. Further, we have a natural isomorphism $H^0(C,L(-p) ) \cong H^0(X,L(-q))$, and we know, by the assumptions on $C$, that $$K_{g,1}(C,L(-p))=0.$$ We will use this to deduce $K_{g,1}(X,L(-q))=0$.

We have a natural commutative diagram
{\small{$$
\xymatrix@C=1em{
\bigwedge^{g+1} H^0(C,L(-p)) \ar[r]^-d \ar[d]^{\alpha}  & \bigwedge^{g} H^0(C,L(-p)) \otimes H^0(C,L(-p)) \ar[r]^-d \ar[d]^{\beta}& \bigwedge^{g-1} H^0(C, L(-p))\otimes H^0(C,L^{\otimes 2}(-2p)) \ar[d]^{\gamma} \\
\bigwedge^{g+1} H^0(X,L(-q)) \ar[r]^-d  & \bigwedge^{g} H^0(X,L(-q)) \otimes H^0(X,L(-q)) \ar[r]^-d &\bigwedge^{g-1} H^0(X, L(-q))\otimes H^0(X,L^{\otimes 2}(-2q))
},
$$}}

where $\alpha, \beta$ are isomorphisms, and $\gamma$ is induced from the natural composition
$$ H^0(C,L^{\otimes 2}(-2p)) \hookrightarrow H^0(C,L^{\otimes 2}(-p))\cong H^0(X,L^{\otimes 2}(-2q)).$$ As $K_{g,1}(C,L(-p))=0$, the top row is exact and since $\beta$ is surjective and $\gamma$ is injective, the bottom row must also be exact, as required.
\end{proof}

\vskip 4pt

From Proposition \ref{elliptic} we readily deduce Theorem \ref{arb-gon}, that is, the effective version of the Gonality Conjecture.
\begin{proof}
Fix $k \geq 4$. Assume that for the general $k$-gonal curve $C$ of genus $g$ one has $K_{g,1}(C,L)=0$, for any line bundle $L \in \text{Pic}^{2g-1+k}(C)$. We claim there exists a smooth curve $C'$ of genus $g+1$ and gonality $k$, such that $K_{g+1,1}(C',L')=0$,  for each line bundle $L' \in \text{Pic}^{2g+1+k}(C')$. By performing induction on $g$ and noting that the initial step is Theorem \ref{odd}, this suffices to prove the theorem. By Lemma \ref{lem1}, it further suffices to prove that there exists a smooth curve $C'$ of genus $g+1$ and gonality $k$ such that, for each line bundle $L' \in \text{Pic}^{2g+1+k}(C')$, there exists a point $q \in C'$ such that $K_{g,1}(C',L'(-q))=0$.

\vskip 3pt

Let $X=C \cup_p E$ be the genus $g+1$ stable curve introduced in Proposition \ref{elliptic}. Consider a flat family $\pi: \mathcal{C} \to \Delta$ of stable curves over a smooth, pointed, one dimensional base $(\Delta,0)$, such that the central fibre is $X$ and $\pi^{-1}(s)$ is a smooth curve of gonality $k$ for all $0 \neq s \in \Delta$. As $X$ is a curve of compact type, after shrinking $\Delta$ and performing a finite base change if necessary, we have a relative Picard scheme $$v:\;  \mathcal{P}ic^{2g+1+k}(\mathcal{C}/\Delta) \to \Delta,$$ with central fibre consisting of all line bundles of multidegree $(2g+k,1)$ on $X=C \cup_p E$; this scheme is flat and proper over $\Delta$, see \cite[\S 4]{deligne} and \cite{eisenbud-harris-limit}, proof of Theorem  3.3.

\vskip 3pt

Let $\mathcal{C}_0:=\mathcal{C} \setminus \{p\}$ be the open set of all points which are smooth in the fibres over $\Delta$. By Proposition \ref{elliptic} together with semicontinuity for the dimension of Koszul groups, there is an open subset $U \seq \mathcal{P}ic^{2g+1+k}(\mathcal{C}/\Delta) \times_{\Delta} \mathcal{C}_0$ such that for each pair $(L',q') \in U$, one has $K_{g,1}(C',L'(-q))=0$, where $C'=\pi^{-1}(v(L'))$, and such that $$0 \notin v\Bigl(\mathcal{P}ic^{2g+1+k}(\mathcal{C}/\Delta) \setminus \mbox{pr}_1(U)\Bigr),$$
where $\mbox{pr}_1:\mathcal{P}ic^{2g+1+k}(\mathcal{C}/\Delta) \times_{\Delta}\mathcal{C}_0 \to \mathcal{P}ic^{2g+1+k}(\mathcal{C}/\Delta)$ is the projection. As flat morphisms are open, $\mbox{pr}_1(U)$ is open, and since $v$ is proper, the image $$V:=v\bigl(\mathcal{P}ic^{2g+1+k}(\mathcal{C}/\Delta)\setminus \mbox{pr}_1(U)\bigr)$$is closed. Thus if $0 \neq t \in \Delta \setminus V$ and
$C_t:= \pi^{-1}(t)$, then, for each $L \in \text{Pic}^{2g+1+k}(C_t)$ there exists $q \in C_t$ with $K_{g,1}(C_t, L(-q))=0$, as required.

\end{proof}

\section{Schreyer's Conjecture for general curves of non-maximal gonality} \label{gonericity}
In this section, we begin discussing Schreyer's Conjecture for general $k$-gonal curves of genus $g\geq 2k-1$. We start by explaining the relevance of \cite{hirsch} for Conjecture
\ref{schrconj1}.

\vskip 3pt

For $g=2k-1$, we consider two divisors on $\mathcal{M}_g$, which already played a role in \cite{aprodu-remarks} or \cite{generic-secant}:
\begin{align*}
\mathfrak{Syz}:&= \bigl\{ [C] \in \mathcal{M}_g: K_{k-1,1}(C, \omega_C) \neq 0 \bigr\} \\
\mathfrak{Hur}:&= \bigl\{ [C] \in \mathcal{M}_g: W^1_{k}(C)\neq \emptyset\bigr\}.
\end{align*}
Recall that $\mathfrak{Syz}$ has a structure of degeneracy locus, whereas $\mathfrak{Hur}$ is the push-forward of the smooth Hurwitz space $\mathcal{H}_{2k-1,k}$ of degree $k$ covers of
$\PP^1$. We view both $\mathfrak{Syz}$ and $\mathfrak{Hur}$ as divisors on the \emph{moduli stack} of smooth curves of genus $2k-1$, rather than on the associated coarse moduli space.
It is proved in \cite{hirsch}, that one has the
following relation at stack level:
$$ [\mathfrak{Syz}] = (k-1)[\mathfrak{Hur}]\in CH^1(\cM_{2k-1}).$$
\begin{thm}\label{hirsch} (\cite{hirsch})
Let $C$ be a curve of genus $2k-1$ and gonality $k$ such that the point $W^1_k(C)$ consists of a reduced single point. Then $b_{k-1,1}(C,\omega_C)=k-1$.
\end{thm}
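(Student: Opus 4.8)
The plan is to deduce Theorem~\ref{hirsch} from the combination of three ingredients: the Eagon--Northcott \emph{lower} bound on $b_{k-1,1}(C,K_C)$, the determinantal structure of the syzygy divisor $\mathfrak{Syz}$, and the class identity $[\mathfrak{Syz}]=(k-1)[\mathfrak{Hur}]$ in $CH^1(\mathcal{M}_{2k-1})$. Since $C$ is $k$-gonal of genus $g=2k-1$ it has non-maximal gonality, so the scroll argument behind inequality (\ref{gonericconj}) applies with $p=g-k=k-1$ and $g-k+1=k$, giving $b_{k-1,1}(C,K_C)\geq (k-1)\binom{k}{k}=k-1$; equivalently $K_{k-1,1}(C,K_C)\neq 0$. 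On the other hand, following Hirschowitz--Ramanan, $\mathfrak{Syz}$ is the zero locus of $\det\phi$ for a morphism $\phi\colon \mathcal{A}\to\mathcal{B}$ of vector bundles of the \emph{same} rank on the moduli stack $\mathcal{M}_{2k-1}$, constructed so that $\mathrm{coker}(\phi_{[C']})\cong K_{k-1,1}(C',K_{C'})$ for every $[C']$, hence $\mathrm{corank}(\phi_{[C']})=b_{k-1,1}(C',K_{C'})$. By Voisin's theorem \cite{V1} (Green's Conjecture for a general curve of odd genus) the section $\det\phi$ is not identically zero, so $\mathfrak{Syz}$ is a genuine effective divisor, and since every curve with $K_{k-1,1}\neq 0$ has non-maximal Clifford index --- which in genus $2k-1$ forces it to be $k$-gonal --- the support of $\mathfrak{Syz}$ is contained in $\mathfrak{Hur}$.

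\textbf{Pinning down the multiplicity.} Next I would determine the multiplicity of $\det\phi$ along $\mathfrak{Hur}$. Since $\mathfrak{Hur}$ is the push-forward of the irreducible (smooth) Hurwitz space $\mathcal{H}_{2k-1,k}$, it is an irreducible divisor; combined with the previous paragraph this gives $\mathrm{div}(\det\phi)=m\cdot\mathfrak{Hur}$ for some integer $m\geq 1$. Comparing with $[\mathfrak{Syz}]=(k-1)[\mathfrak{Hur}]$ and using that $[\mathfrak{Hur}]\in CH^1(\mathcal{M}_{2k-1})$ is non-torsion forces $m=k-1$: the section $\det\phi$ vanishes to order exactly $k-1$ along the \emph{reduced} divisor $\mathfrak{Hur}$.

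\textbf{Local analysis at $[C]$ and conclusion.} Now I would use the hypothesis that $W^1_k(C)$ is a reduced single point. The finite map $\mathcal{H}_{2k-1,k}\to\mathcal{M}_{2k-1}$ is then injective near $[C]$, and its differential at the point over $[C]$ is injective precisely because that point of $W^1_k(C)$ is reduced of the expected dimension $0$; since $\mathcal{H}_{2k-1,k}$ is smooth of dimension $\dim\mathcal{M}_{2k-1}-1$, it follows that $\mathfrak{Hur}$ is smooth at $[C]$. Picking a local equation $t$ for $\mathfrak{Hur}$ near $[C]$ we get $\det\phi=u\cdot t^{k-1}$ with $u$ a local unit, so $\det\phi\in\mathfrak{m}_{[C]}^{k-1}\setminus\mathfrak{m}_{[C]}^{k}$, i.e. $\mathrm{ord}_{[C]}(\det\phi)=k-1$. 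Finally I would apply the elementary fact that for a morphism $\phi\colon\mathcal{A}\to\mathcal{B}$ of vector bundles of equal rank $N$ and any point $x$ with $c:=\mathrm{corank}(\phi_x)$, choosing local bases adapted to $\phi_x$ and clearing the invertible $(N-c)\times(N-c)$ block puts $\phi$ in block-diagonal form $\mathrm{diag}(I_{N-c},\psi)$ with every entry of the $c\times c$ matrix $\psi$ in $\mathfrak{m}_x$, whence $\det\phi$ equals $\det\psi\in\mathfrak{m}_x^{c}$ up to a unit and $\mathrm{ord}_x(\det\phi)\geq c$. Taking $x=[C]$ gives $b_{k-1,1}(C,K_C)=\mathrm{corank}(\phi_{[C]})\leq\mathrm{ord}_{[C]}(\det\phi)=k-1$, and together with the Eagon--Northcott bound $b_{k-1,1}(C,K_C)\geq k-1$ this yields the desired equality $b_{k-1,1}(C,K_C)=k-1$.

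\textbf{Main obstacle.} The genuinely hard content is packaged in the two structural inputs borrowed from \cite{hirsch}: producing a presentation of $K_{k-1,1}(C,K_C)$ by vector bundles of \emph{equal rank} over the moduli stack (so that ``$\mathrm{corank}$ of $\phi_{[C]}$'' literally computes the Betti number and ``$\mathfrak{Syz}$'' is literally the vanishing of a section of a line bundle), and the enumerative computation $[\mathfrak{Syz}]=(k-1)[\mathfrak{Hur}]$. Granting these, the only remaining delicate point is the translation ``$W^1_k(C)$ a reduced single point $\Rightarrow$ $\mathfrak{Hur}$ smooth at $[C]$'', which is exactly what prevents $\det\phi$ from vanishing to order \emph{higher} than $k-1$ at $[C]$; at a non-reduced or positive-dimensional $W^1_k$, $\det\phi$ does vanish to higher order and correspondingly $b_{k-1,1}(C,K_C)>k-1$, in agreement with \cite[Proposition~4.10]{SSW}.
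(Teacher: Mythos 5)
Your argument is correct and is essentially the paper's own proof: both rest on the Hirschowitz--Ramanan determinantal presentation of $\mathfrak{Syz}$ by equal-rank bundles with corank equal to $b_{k-1,1}$, the cycle identity $[\mathfrak{Syz}]=(k-1)[\mathfrak{Hur}]$, the inequality $\mathrm{ord}_{[C]}(\det\phi)\geq \mathrm{corank}(\phi_{[C]})$, and Coppens' translation of the reducedness of $W^1_k(C)$ into smoothness of $\mathfrak{Hur}$ at $[C]$ --- the paper merely runs this as a contradiction (if $b_{k-1,1}(C,K_C)\geq k$ then $\det\phi$ vanishes to order $\geq k$, so $\mathfrak{Hur}$ is singular at $[C]$), whereas you run it forward. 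The only blemishes are cosmetic: the odd-genus case of Green's conjecture needed to know that $\mathfrak{Syz}$ is a proper divisor is \cite{V2} rather than \cite{V1}, and the set-theoretic containment $\mathrm{Supp}(\mathfrak{Syz})\subseteq\mathfrak{Hur}$ should be quoted from \cite{hirsch} together with \cite{V2} (or deduced from effectivity of the residual cycle) rather than from the full statement of Green's conjecture in genus $2k-1$, which is what that combination proves.
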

\begin{proof}
For a smooth curve $C$, we denote by $\phi:X\rightarrow (S,0)$ its versal deformation space, hence the associated moduli map $m(\phi):S\rightarrow \cM_g$ is an \'etale neighbourhood  of the
point $[C]\in \cM_g$.  For $s\in S$, set $C_s:=\phi^{-1}(s)$, thus $C_0=C$. From \cite{hirsch}, there exist two vector bundles $V$ and $W$ of the same rank over $S$ together with
a morphism $\chi:V \to W$,
such that, for any $s\in S$, we may identify $K_{k-1,1}(C_s,\omega_{C_s})=\mbox{Ker}(\chi_{s})$. Then the divisor $\mathfrak{Syz}(\phi)\seq S$ is defined by $\det(\chi)$.
Suppose $b_{k-1,1}(C, \omega_C) \geq k$. Thus $\det(\chi)$ vanishes to order at least $k$, cf.\ \cite[Lemma 6.1]{hirsch}. By the equality of cycles
$\mathfrak{Syz}(\phi) = (k-1)\mathfrak{Hur}(\phi)$ on $S$, the function defining $\mathfrak{Hur}(\phi)$ must vanish to order at least two. Thus $\mathfrak{Hur}(\phi)$ is not smooth at the point
$0\in S$. On the other hand it is well-known, see \cite{coppens}, that $\mathfrak{Hur}(\phi)$ is smooth at a point $0\in S$ corresponding to a curve $C$ if and only if $W^1_k(C)$ consists of a single pencil $A$ and, moreover, $h^0(C, A^{\otimes 2})=3$.
\end{proof}
\begin{remark} \label{HR-nodal}
One can generalise Theorem \ref{hirsch} as follows. For an integral nodal curve $D$,  we define $W^1_k(D)\subseteq \overline{\mbox{Pic}}^k(D)$ to be the closed subset of the compactified Jacobian of rank one, torsion free sheaves $A$ of degree $k$ on $C$ with $h^0(D,A) \geq 2$. Suppose $D$ is integral, nodal of genus $2k-1$ and assume $W^1_k(D)=\{A\}$, where $A$ is \emph{locally-free} and base-point free. Then the proof of Theorem \ref{hirsch} shows $b_{k-1,1}(D,\omega_D)=k-1$.
\end{remark}

\vskip 3pt

We now turn our attention to curves of genus $g$ and non-maximal gonality $k\leq \frac{g+1}{2}$. Let $G_d^{1, \mathrm{bpf}}(C)\seq G^1_d(C)$ be the subvariety of base point
free pencils of degree $d$ on $C$ and further let $W^1_d(C)$ denote the Brill--Noether variety of line bundles of degree $d$ with at least two sections. Note that there is a morphism $G^1_d(C) \to W^1_d(C)$, with fibre over a point  $[L] \in W^1_d(C)$ equal to the Grassmannian of pencils $V \seq H^0(C,L)$ . The following observation is a slight modification of the linear growth condition of \cite[Theorem 2]{aprodu-remarks}:

\begin{lem}\label{bpfgr}
A general curve $C$ of genus $g$ and gonality $k\leq \frac{g+1}{2}$ satisfies \emph{bpf-linear growth}:
\begin{align*}
\dim G^1_{k+m}(C) &\leq m,  \;\; \text{for $0 \leq m \leq g-2k+1$} \\
\text{and, further, \;}
\dim G^{1,\mathrm{bpf}}_{k+m}(C) &<m, \;\; \text{for $0 < m \leq g-2k+1$} .\end{align*}
\end{lem}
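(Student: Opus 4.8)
The plan is to reduce both inequalities to the known Brill--Noether behaviour of a general $k$-gonal curve via a degeneration argument, in the spirit of Aprodu's proof of the linear growth condition. First I would observe that it suffices to prove the statement for a single, suitably chosen $k$-gonal curve of genus $g$, since the conditions $\dim G^1_{k+m}(C)\le m$ and $\dim G^{1,\mathrm{bpf}}_{k+m}(C)<m$ are closed conditions in families (by upper semicontinuity of fibre dimension of the relative $G^1_{k+m}$), so they hold for a general point of the $k$-gonal locus once they hold at one point. The natural test curve is a general $k$-gonal curve of genus $g$, or — for the inductive mechanism — the transverse union $C_0 = C' \cup_{y} E$ of a general $k$-gonal curve $C'$ of genus $g-1$ with an elliptic (or rational) tail attached at a general point, degenerating further to a chain so that one can invoke additivity of adjusted Brill--Noether numbers for limit linear series.

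The key steps, in order, are: (1) set up a one-parameter family $\mathcal{C}\to (T,0)$ of stable curves with general $k$-gonal smooth fibre and special fibre a curve of compact type built out of a general $k$-gonal curve and a tail, together with the associated relative space $\mathcal{G}^1_{k+m}(\mathcal{C}/T)$, which is proper over $T$; (2) bound $\dim G^1_{k+m}(C_0)$ on the special fibre by the theory of limit linear series: every component of $G^1_{k+m}$ of the general fibre specializes to a limit $\mathfrak{g}^1_{k+m}$, and the adjusted Brill--Noether number $\rho$ on each component of $C_0$, summed over components, must be nonnegative — since $C'$ is general of gonality exactly $k$ it carries a unique $\mathfrak{g}^1_k$ and no $\mathfrak{g}^1_{k+m}$ of dimension exceeding $m$ for $m\le g-2k+1$ (equivalently $\dim W^1_{k+m}(C')\le m$), and the tail contributes nothing; (3) this yields $\dim G^1_{k+m}(C_0)\le m$ and hence, by semicontinuity, $\dim G^1_{k+m}(C)\le m$ for general $C$, proving the first inequality. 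For the second (strict) inequality, I would argue that if $\dim G^{1,\mathrm{bpf}}_{k+m}(C)=m$ for $0<m\le g-2k+1$, then such a component of base-point-free pencils would have to dominate $G^1_k(C)$ under the subtraction-of-base-point map $G^1_{k+m}(C)\to \bigcup_j W^1_{k+m-j}(C)$; but a general $k$-gonal curve carries a \emph{unique} pencil of minimal degree $k$ (cf.\ \cite{arbarello-cornalba}), so for $m$ in this range the base locus of a general member of such a component is forced to be nonempty by a dimension count — removing it lands in $G^1_{k+m-1}$, and peeling off base points inductively contradicts the first inequality at the bottom level $m=1$, where $G^{1,\mathrm{bpf}}_{k+1}(C)$ would have to be positive-dimensional, forcing $C$ to have either a second $\mathfrak{g}^1_k$ or a $\mathfrak{g}^1_{k+1}$ computing a smaller gonality, both excluded for general $C$.

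The main obstacle I anticipate is step (2): controlling \emph{all} components of the limit linear series scheme on the reducible special fibre, including the non-refined (non-dimensionally-proper) limits and the components of $G^1_{k+m}$ whose generic member is special, so that the naive additivity-of-$\rho$ bound genuinely applies. One must either work with a special fibre chosen delicately enough (e.g. a general $k$-gonal curve glued to a general elliptic tail, where the compactified Picard/limit-linear-series space is well-behaved, as used in \cite[\S 3.G]{ha-mu}) or invoke the smoothing results that guarantee every component of the relative $\mathcal{G}^1_{k+m}$ meets the general fibre, so that no component of $G^1_{k+m}(C)$ is "lost". A clean way around this is to induct on $g$, proving the lemma for genus $g$ from genus $g-1$ exactly as in the elliptic-tail degeneration of Section \ref{gonality}: attaching an elliptic tail at a general point changes neither the gonality nor the relevant dimension estimates, and the base case $g=2k-1$ (or $g=2k$) is either vacuous or follows from the uniqueness of the minimal pencil on a general curve in the Hurwitz divisor.
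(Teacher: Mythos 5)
Your route is genuinely different from the paper's, and substantially harder than it needs to be. The paper does not redo any degeneration: it takes as input Aprodu's linear growth theorem from \cite{aprodu-remarks}, namely $\dim W^1_{k+m}(C)=m$ for $0\leq m\leq g-2k+1$ on a general $k$-gonal curve, and deduces the bound on $G^1_{k+m}(C)$ purely formally. Given an irreducible component $J\subseteq G^1_{k+m}(C)$, one considers its image under $c:G^1_{k+m}(C)\to W^1_{k+m}(C)$ and takes $j\geq 0$ maximal with $c(J)\subseteq W^{1+j}_{k+m}(C)$; since for any component $Z$ of $W^r_d(C)$ the intersection $Z\cap W^{r+1}_d(C)$ has codimension at least two in $Z$ (\cite[\S IV.1]{ACGH1}), one gets $\dim c(J)\leq m-2j$, and since the general fibre of $c_{|J}$ is a Grassmannian $G(2,2+j)$, it follows that $\dim J\leq 2j+\dim c(J)\leq m$. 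The strict inequality for the base-point-free locus is obtained by the same fibration argument combined with \cite[Theorem 2.6]{arbarello-cornalba}. Your step (2) amounts to re-proving the cited theorem of \cite{aprodu-remarks} by limit linear series, and you yourself identify the control of all components of the space of limit linear series on the special fibre as the main obstacle; that obstacle is exactly why the paper quotes the result rather than re-running the degeneration.

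More importantly, your argument for the second (strict) inequality has a genuine gap. You propose to show that an $m$-dimensional component of $G^{1,\mathrm{bpf}}_{k+m}(C)$ would have general member with nonempty base locus and then to peel off base points inductively. But a component of the base-point-free locus has, by definition, base-point-free general member, so the subtraction-of-base-points map is the identity on it and there is nothing to peel; the ``dimension count'' that is supposed to force a base point is never carried out, and it cannot be carried out by the mechanism you describe, because the statement to be proved is precisely that every excess-dimensional component of $G^1_{k+m}(C)$ arises by adding base points to pencils of lower degree. Ruling out an $m$-dimensional component whose general member is base-point-free requires a genuine additional input --- in the paper this is \cite[Theorem 2.6]{arbarello-cornalba} --- and your proposal supplies no substitute for it. The uniqueness of the minimal pencil only handles the bottom of the range and does not propagate to $m>0$ along the lines you indicate.
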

\begin{proof}

From \cite{aprodu-remarks}, we have $\mbox{dim } W^1_{k+m}(C)=m$, for  $0\leq  m \leq g-2k+1$.
We observe that if $Z \seq W^r_{d}(C)$ is an irreducible component, then $Z \cap W^{r+1}_d(C)$ has codimension at least two in $Z$, provided $g-r+d \geq 0$. This follows from the fact that no component of $C^r_d$ is entirely contained in $C^{r+1}_d$, where $C^r_d$ is the variety parametrizing divisors $D$ of degree $d$ on $C$ with $\dim |D| \geq r$, see \cite[\S IV.1]{ACGH1}.

We claim $\dim G^1_{d+m}(C) \leq m,$ for $0 \leq m \leq g-2k+1$. Take an irreducible component $J \seq G^1_{d+m}(C)$ and consider the restriction to $J$ of the surjection $c: G^1_{k+m}(C)\rightarrow W^1_{k+m}(C)$. Assume $c(J) \seq W^{1+j}_{d+m}(C)$ and choose $j\geq 0$ maximal with this property. Then by the above, $\dim \psi(J) \leq m-2j$. Since the general fibre of $c_{|J}$ is isomorphic to the Grassmannian $G(2,2+j)$, it follows $\dim J \leq 2j+\dim{c(J)}\leq m$. By an identical argument and using
\cite[Theorem 2.6]{arbarello-cornalba}, we similarly obtain that $\dim G^{1,\mathrm{bpf}}_{k+m}(C) <m$, in the range $0<m\leq g-2k+1$.
\end{proof}

The next Proposition is similar to Theorem 2 in \cite{aprodu-remarks} and we skip the details.
\begin{prop} \label{tf-limit}
Let $C$ be a smooth curve of genus $g$ and gonality $k\leq \frac{g+1}{2}$. Assume $C$ satisfies bpf-linear growth and $W^1_k(C)$ consists of a single point $A$. If $(x_i,y_i)$ are general pairs of points on $C$, where $1\leq i \leq g-2k+1$, let $D$ be the nodal curve obtained by glueing $x_i$ to $y_i$ for all $i$. Then  $W^1_{g-k+1}(D)=\{\nu_*(A)\}$, where $\nu : C \to D$ is the normalisation morphism. Furthermore, $\mathrm{gon}(D)=g-k+1$.
\end{prop}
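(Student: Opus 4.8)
The plan is to prove that $W^1_{g-k+1}(D)$ consists set-theoretically of the single sheaf $\nu_*(A)$, and that no smaller degree pencil (or equivalently smaller degree linear series of dimension $\geq 1$) exists on $D$, so $\mathrm{gon}(D) = g-k+1$. First I would establish that $\nu_*(A)$ is indeed a point of $W^1_{g-k+1}(D)$: since $A$ is a base point free pencil of degree $k$ on $C$ and $\deg \nu_*(A) = k + (g-2k+1) = g-k+1$, one has $h^0(D, \nu_*(A)) \geq h^0(C, A) = 2$ by the projection formula, so $\nu_*(A)$ is a rank one torsion free sheaf in $W^1_{g-k+1}(D)$. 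The substance of the argument is the converse: any torsion free sheaf $\mathcal{F}$ of degree $g-k+1$ on $D$ with $h^0(D,\mathcal{F}) \geq 2$ must equal $\nu_*(A)$.

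The key step is a degeneration/specialization argument of limit-linear-series type. Given $\mathcal{F} \in W^1_{g-k+1}(D)$, I would pull back along $\nu$ (or rather take the induced torsion-free sheaf analysis): if $\mathcal{F}$ is locally free at all the nodes, then $\nu^*\mathcal{F}$ is a line bundle of degree $g-k+1$ on $C$ with $h^0 \geq 2$, and every section of $\mathcal{F}$ gives a section of $\nu^*\mathcal{F}$ vanishing compatibly at the glued pairs $(x_i, y_i)$; at nodes where $\mathcal{F}$ fails to be locally free, $\mathcal{F}$ is (the pushforward of) a line bundle on a partial normalization, so one reduces to a line bundle $L$ on $C$ of degree $g-k+1 - (\text{number of separated nodes}) + (\ldots)$ with $h^0(C,L) \geq 2$. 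In either case, one extracts from $\mathcal{F}$ a base-point-free pencil $A' = |L|_{\mathrm{bpf}}$ on $C$ of degree $k + m'$ for some $0 \le m' \le g-2k+1$, whose members pass through a prescribed subset of the $g-2k+1$ glued pairs. The counting argument is then: the family of sheaves $\mathcal{F} \in W^1_{g-k+1}(D)$ giving rise (for fixed $m'$) to such a pencil is dominated by a bundle over $G^{1,\mathrm{bpf}}_{k+m'}(C)$ whose fibers record the residual combinatorial data of which glued pairs are "absorbed"; by bpf-linear growth (Lemma~\ref{bpfgr}), $\dim G^{1,\mathrm{bpf}}_{k+m'}(C) < m'$ for $m' > 0$, which forces the generic choice of the $g-2k+1$ pairs to miss this locus entirely unless $m' = 0$. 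When $m' = 0$ we recover $A' = A$ (using $W^1_k(C) = \{A\}$), and then compatibility at all the pairs forces $L = A(\sum x_i + \sum y_i)$, i.e. $\mathcal{F} = \nu_*(A)$. Finally $\mathrm{gon}(D) = g-k+1$ follows since we have just shown $W^1_{g-k}(D) = \emptyset$ (the same dimension count with target degree $g-k$ would require $G^{1,\mathrm{bpf}}_{k+m'}(C)$ nonempty with $k + m' \le g-k$, and one checks the count still fails) together with the fact that $\nu_*(A)$ realizes degree $g-k+1$.

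The main obstacle I anticipate is making the "absorbed pairs" bookkeeping precise, namely correctly relating a torsion-free sheaf on $D$ with $h^0 \geq 2$ to a base-point-free pencil on $C$ of controlled degree, keeping careful track of base points, of nodes at which the sheaf is not locally free, and of the incidence conditions imposed by the $g-2k+1$ general pairs. One must argue that after passing to a base-point-free pencil on $C$, the number of pairs $(x_i,y_i)$ that the pencil is forced to "use up" is exactly $g-k+1 - (\text{degree of the bpf pencil on } C)$ plus contributions from non-locally-free nodes, and then run the dimension estimate over $G^{1,\mathrm{bpf}}_{k+m'}(C)$ against the $(g-2k+1)$-dimensional choice of pairs. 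Once that dictionary is set up, Lemma~\ref{bpfgr} does the rest; the inequality $\dim G^{1,\mathrm{bpf}}_{k+m}(C) < m$ for $m>0$ is precisely what kills every pencil on $D$ other than $\nu_*(A)$, and the hypothesis $W^1_k(C) = \{A\}$ handles the residual $m = 0$ case.
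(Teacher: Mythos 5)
Your approach is essentially the paper's: the paper gives no details here and simply says the proof is ``very similar to the proof of Theorem 2 in \cite{aprodu-remarks}'', which is precisely the reduction you describe --- extract from a sheaf in $W^1_{g-k+1}(D)$ a base-point-free pencil on $C$ of degree $k+m'$ subject to incidence conditions at the glued pairs, and kill every case with $m'>0$ by the dimension estimate $\dim G^{1,\mathrm{bpf}}_{k+m'}(C)<m'$ of Lemma \ref{bpfgr} against the general choice of the $g-2k+1$ pairs, with $W^1_k(C)=\{A\}$ handling $m'=0$. One small correction to your $m'=0$ endgame: since a degree-$k$ pencil does not identify general pairs $(x_i,y_i)$, the sheaf must fail to be locally free at \emph{every} node, so one concludes $\mathcal{F}=\nu_*(A)$ directly rather than via the line bundle $A(\sum x_i+\sum y_i)$, whose degree $2g-3k+2$ does not equal $g-k+1$ unless $g=2k-1$.
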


\vskip 3pt

Consider the moduli space $\hh_{g,k}$ of degree $k$ admissible covers of genus $g$. Precisely, $$\hh_{g,k}=\mm_{0,2g+2k-2}(\mathcal B \mathfrak{S}_k)/\mathfrak{S}_{2g+2k-2}$$ is the space of twisted stable maps from genus zero curves into the classifying stack $\mathcal{B} \mathfrak S_k$ of the symmetric group $\mathfrak{S}_k$ and which are simply branched over $2g+2k-2$ points which we do not order. We refer to \cite{twisted} for the construction of this space. It is known that $\hh_{g,k}$ is the normalisation of the space of admissible covers constructed by Harris and Mumford in \cite{ha-mu}.  There is a morphism $\pi: \hh_{g,k} \to \overline{\mathcal{M}}_g$ given by stabilisation of the source curve of each admissible cover and then $\mbox{Im}(\pi)=\overline{\mathfrak{Hur}}$. The following result is the translation of Proposition \ref{tf-limit} to the moduli space of admissible covers.

\vskip 3pt

\begin{prop} \label{adm-limit}
Let $C$ be a smooth curve of genus $g$ and gonality $k\leq \frac{g+1}{2}$.  Assume $C$ satisfies bpf-linear growth and that $W^1_k(C)$ consists of a single point $A$, which we assume to have only simple ramification. For $1 \leq i \leq g-2k+1$, we choose general pairs of points $(x_i,y_i)$ on $C$ and let $[D]\in \mm_{2g-2k+1}$ be the nodal curve obtained by glueing $x_i$ to $y_i$. If
$$ \pi:  \hh_{2g-2k+1,g-k+1} \to \overline{\mathcal{M}}_{2g-2k+1}$$
is the forgetful map, then $\pi^{-1}([D])$ consists of a unique point $[f': B' \to T]$.
\end{prop}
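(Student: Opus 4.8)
The plan is to deduce Proposition \ref{adm-limit} from Proposition \ref{tf-limit} by analyzing how points of $\pi^{-1}([D])$ correspond to limit linear series / torsion-free sheaves on $D$. Recall that by Proposition \ref{tf-limit}, the set $W^1_{g-k+1}(D)\subseteq \overline{\mathrm{Pic}}^{g-k+1}(D)$ of rank-one torsion-free sheaves of degree $g-k+1$ with at least two sections consists of the single sheaf $\nu_*(A)$, where $\nu:C\to D$ is the normalization and $A$ is the unique degree-$k$ pencil on $C$. An admissible cover $[f]\in \pi^{-1}([D])$ of degree $g-k+1$ in particular produces, after stabilization, the nodal curve $D$ together with a degree $g-k+1$ torsion-free sheaf $\mathcal{L}_f$ on $D$ carrying a pencil of sections (the pullback of $\mathcal{O}_{\mathbb{P}^1}(1)$ along the stable reduction of the cover), so necessarily $\mathcal{L}_f\cong\nu_*(A)$. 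Hence the only possible point of $\pi^{-1}([D])$ is the admissible cover obtained from the unique pencil $A$ on $C$; what remains is to check this admissible cover is unique as a point of $\hh_{2g-2k+1,g-k+1}$.

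First I would make precise the correspondence between $\pi^{-1}([D])$ and the data on $D$. Since $D$ is irreducible with $g-2k+1$ nodes, an admissible cover over $[D]$ is determined by: (i) a partial normalization $D'\to D$ (un-gluing some subset of the nodes, where the cover is forced to separate branches) together with a base-point-free pencil of degree $g-k+1$ on the normalization $C$ of $D$ inducing the map, and (ii) the gluing/attachment data over the remaining nodes and over the $2(2g-2k+1)+2(g-k+1)-2$ branch points, plus rational tails needed to make the cover admissible. The key point is that the pencil on $C$ underlying $[f]$ must be $A$ by the first paragraph (it is $\nu^*$ of the pencil on the normalization of the torsion-free sheaf $\nu_*A$, which is $A$ since $\mathrm{gon}(C)=k$ and $A$ is unique). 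So I reduce to: given the fixed pencil $A$ on $C$, show there is exactly one admissible cover structure on $D$ extending it.

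The main step is therefore to pin down the behavior of $A$ at the $g-2k+1$ nodes $p_i$ of $D$ (images of $x_i\sim y_i$). Since the pairs $(x_i,y_i)$ are general on $C$, for each $i$ we have $A(-x_i-y_i)$ non-special and $A$ not separating $x_i$ from $y_i$ in a way compatible with a connected admissible cover: more precisely, $x_i$ and $y_i$ lie in distinct fibers of $|A|$ for generic choices, and the two preimages of a generic point cannot be identified in a degree-$(g-k+1)$ simply-branched cover without forcing extra branching. I would argue that at each node the admissible cover must be of the ``un-glued'' type — i.e. the stable reduction introduces a $\mathbb{P}^1$-bridge at $p_i$ on which the cover restricts to a fixed configuration — and that the monodromy/attachment data over each node and over the branch points is rigid because the pencil $A$ itself, its ramification (assumed simple), and the $2g+2k-2$-fold branch divisor on $\mathbb{P}^1$ are all fixed; there is no moduli for attaching the pieces since an admissible cover of $\mathbb{P}^1$ by a fixed curve with fixed branch divisor is determined by its combinatorial type, and the genericity of $(x_i,y_i)$ forces a unique combinatorial type. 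Concretely, $h^0(C,A(-x_i-y_i))=h^0(C,A)-2=0$, which combined with $h^0(C,A)=2$ shows the two branches at $p_i$ map to distinct branch-free points of $\mathbb{P}^1$, so the cover separates the node and no monodromy choice is available. This, together with the fact that $\overline{\mathfrak{Hur}}=\mathrm{Im}(\pi)$ and $[D]$ genuinely lies in it (by Proposition \ref{tf-limit}, since $\mathrm{gon}(D)=g-k+1$), gives $\pi^{-1}([D])=\{[f]\}$.

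The hard part will be the rigidity argument over the nodes: ruling out that some admissible cover could, say, glue $x_i$ to $y_i$ inside a single fiber of a degree-$(g-k+1)$ map (which would a priori require $A$ to be replaced by a different, base-point-containing linear series on a partial normalization) or introduce extra rational components with nontrivial moduli. I expect to handle this by combining the uniqueness of $A$ from bpf-linear growth (Lemma \ref{bpfgr} and Proposition \ref{tf-limit}) with the dimension count for $G^{1,\mathrm{bpf}}_{g-k+1}$ — the strict inequality $\dim G^{1,\mathrm{bpf}}_{k+m}(C)<m$ is exactly what prevents a one-parameter family of pencils adapted to the nodes — and with the standard fact that admissible covers with fixed target branch divisor and fixed source stable model form a finite set, so that genericity of the $g-2k+1$ pairs selects a single one. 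I would cite \cite{ha-mu} and \cite{twisted} for the structure of $\hh_{2g-2k+1,g-k+1}$ and its fibers over $\overline{\mathcal{M}}_{2g-2k+1}$, and otherwise the proof is a matter of bookkeeping parallel to the torsion-free-sheaf analysis already carried out in Proposition \ref{tf-limit}.
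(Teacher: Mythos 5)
Your overall strategy --- reduce to Proposition \ref{tf-limit} and then argue rigidity of the attachment data --- is not the route the paper takes, and as written it has two genuine gaps. The first is the opening reduction: you claim that an admissible cover $[f:B\to T]\in\pi^{-1}([D])$ induces a degree $g-k+1$ torsion-free sheaf on $D$ with two sections, hence equal to $\nu_*(A)$, hence that ``the pencil on $C$ underlying $[f]$ must be $A$.'' But the restriction $f_0$ of $f$ to the unique positive-genus component $C_0\cong C$ of $B$ can a priori have any degree $d$ with $k\leq d\leq g-k+1$, and the passage from the admissible cover to a limit torsion-free sheaf on the irreducible (non-compact-type) curve $D$ involves twists along the rational chains of $B$; it does not simply hand you $\nu_*(f_0^*\OO(1))$ (whose degree would be wrong for $d>k$). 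Pinning down $d=k$ is exactly where the work is, and the paper does it directly: if $j$ is the number of pairs with $f_0(x_i)=f_0(y_i)$, then for each of the remaining $g-2k+1-j$ pairs the chain in $B$ joining $x_i$ to $y_i$ must contain a component dominating $\PP^1_0$ (else $T$ acquires a loop), which forces $d\leq k+j$; since $j$ general pairs satisfying $f_0(x_i)=f_0(y_i)$ give $\dim G^{1,\mathrm{bpf}}_d(C)\geq j$, the bpf-linear growth inequality $\dim G^{1,\mathrm{bpf}}_{k+m}(C)<m$ kills the case $d>k$, and genericity then forces $j=0$. You correctly sense that bpf-linear growth and the genericity of the pairs are the decisive inputs, but your proposal never produces the inequality $d\leq k+j$ that makes them bite.

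The second gap is the ``rigidity'' of the rational tails. Finiteness of $\pi^{-1}([D])$ is automatic and is not the content of the proposition; what must be shown is that there is a \emph{unique} configuration. Your assertion that an admissible cover with fixed branch divisor and fixed stable model ``is determined by its combinatorial type'' and that genericity ``forces a unique combinatorial type'' is precisely the statement to be proved, not an argument for it. The paper closes this by a ramification count: once $d=k$ and $j=0$, each of the $g-2k+1$ extra components $\widetilde{R}_i$ over $\PP^1_0$ must map isomorphically (degrees sum to $g-k+1-k$), the components $R_{x_i},R_{y_i}$ attached at $x_i,y_i$ each have degree at least $2$ and hence carry at least two ramification points, and comparing with the total number of branch points forces $\deg(f_{|R_{x_i}})=\deg(f_{|R_{y_i}})=2$, after which the whole cover is determined. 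Without some version of this count, your proof does not exclude, e.g., tails of higher degree or additional components over $\PP^1_0$.
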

\begin{proof}
We show that the construction described in \cite[Theorem\ 5]{ha-mu} is \emph{unique} in our case.
Let $[f': B' \to T]\in \hh_{2g-2k+1, g-k+1}$ be an admissible cover, where $p_a(T)=0$ and $B'$ is a nodal curve whose stable model is isomorphic to $D$. There exists a unique component $C_0$ of $B'$ having positive genus. The restriction $f_0:=f'_{|_{C_0}}$ gives a morphism $f_0:C_0 \to \PP^1_0$ onto a smooth rational component $\PP^1_0$ of $T$. By admissibility, $C_0 \cong C$ and $\deg(f_0) \geq k$.

Assume that $f_0(x_i)=f_0(y_i)$ if and only if $1 \leq i \leq j$. For $i=j+1,\ldots, g-2k+1$, we denote by $R_{x_i}$ and $R_{y_i}$ the irreducible components of $B'$ meeting $C$ at $x_i$ and $y_i$ respectively. As the stabilisation of $B'$ is $D$ and $f'(R_{x_i})\cap f'(R_{y_i})=\emptyset$, for each such $i$ there must be a component $\widetilde{R}_i$ of the subcurve $\overline{B'-C_0}$ of $B'$, such that $f'(\widetilde{R}_i)=\PP_0^1$, or else $T$ contains a loop. As $\deg(f')=g-k+1$, this implies that $d:=\deg(f_0) \leq k+j$.

\vskip 3pt

Since the pairs $(x_1, y_1), \ldots, (x_{j},y_{j})$ are general and $f_0$ gives rise to an element of $G^{1,\mathrm{bpf}}_d(C)$, it follows $\mbox{dim } G_d^{1,\mathrm{bpf}}(C)\geq j$. If $d>k$, this contradicts the bpf-linear condition on $C$, which implies that $\mbox{deg}(f_0)=k$ and $f_0$ is the map induced by the pencil of minimal degree $A \in W^1_k(C)$. Each $\widetilde{R}_i$ maps isomorphically onto $\PP^1_0$. Clearly $\mbox{deg}(f'_{R_{x_i}})\geq 2$ and $\mbox{deg}(f'_{R_{y_i}})\geq 2$, in particular $f'_{R_{x_i}}$ and $f'_{R_{y_i}}$ will both contain at least two ramification points of $f'$, for each $i=1, \ldots, g-2k+1$ (Note that being general points,  $x_i, y_i$ are not among the ramification points of $f_0$). Counting the total number of ramification points of
the cover $f'$, it follows that $\mbox{deg}(f'_{R_{x_i}})=\mbox{deg}(f'_{R_{y_i}})=2$. The morphism $f'$ is now uniquely determined, for $f'^{-1}(\PP^1_0)=C\cup \widetilde{R}_1\cup \ldots \cup \widetilde{R}_{g-2k+1}$ and all the components of $f'^{-1}(f(R_{x_i}))$ and $f'^{-1}(f(R_{y_i}))$ other than $R_{x_i}$ and $R_{y_i}$ respectively  are mapped isomorphically onto their images.
\end{proof}

\vskip 4pt
\begin{definition}
Let $\overline{\mathfrak{BN}}'\seq \hh_{2g-2k+1,g-k+1} \times_{\overline{\mathcal{M}}_{2g-2k+1}} \hh_{2g-2k+1,g-k+1} $
be the closure of the locus of pairs of covers $\bigl([g_1: C \to \PP^1], [g_2: C \to \PP^1]\bigr)$, where $C$ is a smooth curve of genus $2g-2k+1$  and $g_1\ncong g_2$.
We introduce the Brill-Noether divisor of curves possessing an extra pencil
$\overline{\mathfrak{BN}}:= \mathrm{pr}_1(\overline{\mathfrak{BN}}') \seq \hh_{2g-2k+1,g-k+1}$,
where $\mathrm{pr}_1$ denotes the first projection. We set $\mathfrak{BN}:=\overline{\mathfrak{BN}}\cap \mathcal{H}_{2g-2k+1,g-k+1}$.
\end{definition}

Applying \cite[Proposition 2.4]{arbarello-cornalba}, we know that
$\dim \overline{\mathfrak{BN}}' =\dim \mathcal{H}_{2g-2k+1,g-k+1}-1$. Since $\overline{\mathfrak{BN}}'$ is birational to the Severi variety of nodal curves of type $(g-k+1,g-k+1)$ on $\PP^1\times \PP^1$ having geometric genus $2g-2k+1$, using \cite{Ty},
we conclude that $\overline{\mathfrak{BN}}$ is an irreducible divisor. We also recall Coppens' result \cite{coppens} saying that if a  curve $[C]\in \cM_{2g-2k+1}$  has a pencil
$A\in W^1_{g-k+1}(C)$ with $h^0(C, A^{\otimes 2})\geq 4$, then $[C,A] \in \overline{\mathfrak{BN}}$. The locus of such pairs $[C,A]\in \mathcal{H}_{2g-2k+1,g-k+1}$ is of pure
codimension one in $\overline{\mathfrak{BN}}$.

\vskip 3pt

Our goal is to show that, in the notation of Proposition \ref{adm-limit}, the unique point of $\pi^{-1}([D])$ does not lie in
$\overline{\mathfrak{BN}}$ provided the normalisation $C$ is sufficiently general. To ease the notation, set $a:=g-k+1$ and assume $a \geq 3$. To carry out the argument,  it is convenient to work with stable maps rather than admissible covers. Let
$$  \widetilde{\mathcal{G}}_{2a-1,a}^{\mathrm{ns}}:= \widetilde{\mathcal{M}}_{2a-1}^{\mathrm{ns}}(\PP^1,a)$$
denote the moduli space of \emph{finite} stable maps $f: C \to \PP^1$ of degree $a$ such that $C$ has genus $2a-1$, has only \emph{non-separating} nodes and with
$h^0(C, f^*\mathcal{O}_{\PP^1}(1))=2$.
Then $\widetilde{\mathcal{G}}_{2a-1,a}^{\mathrm{ns}}$ is an open subset of the projective moduli space  $\mm_{2a-1}(\PP^1,a)$
of stable maps $f: C \to \PP^1$ with $f_*[C]=a[\PP^1]$.
Let $$\widetilde{\pi}: \; \widetilde{\mathcal{G}}_{2a-1,a}^{\mathrm{ns}}\to \mm_{2a-1}$$
denote the natural projection. The Hurwitz space $\mathcal{H}_{2a-1,a}$ can be realized as the quotient of an open set
of $\widetilde{\mathcal{G}}_{2a-1,a}^{\mathrm{ns}}$ by $PGL(2)$. We associate a stable map $[f: B \to \PP^1] \in \widetilde{\mathcal{G}}_{2a-1,a}^{\mathrm{ns}}$ to the unique point $[f' : B' \to T] \in \pi^{-1}([D])$ by letting $B$ be the curve obtained from $B'$ by contracting all components of $B'$ whose image is different from $f'(C)$, and then letting $[f: B\to \PP^1]$ be the map which, on each component of $B$, agrees with $f'$ on the corresponding component of $B'$ (this is only determined up to the $PGL(2)$ action). Then $$B=C \cup R_1\cup \ldots \cup R_{a-k},$$ where $R_i \cong \PP^1$ meets $C$ at two general points $(x_i,y_i)$ and $\deg(f_{R_i})=1$ for $i=1, \ldots, a-k$.

\vskip 4pt

Let $\mathfrak{B}^{\mathrm{ns}}_a \seq \widetilde{\mathcal{G}}_{2a-1,a}^{\mathrm{ns}} \times_{\mm_{2a-1}} \widetilde{\mathcal{G}}_{2a-1,a}^{\mathrm{ns}} $
be the closure of the locus of pairs $$\Bigl([g_1: X \to \PP^1], [g_2: X \to \PP^1]\Bigr),$$ where $X$ is a smooth curve of genus $2a-1$ and there is no automorphism $\sigma \in PGL(2)$ such that $[g_1] \cong \sigma \cdot [g_2]$. In order to prove that the unique point of $\pi^{-1}([D])$ does not lie in $\overline{\mathfrak{BN}}\subseteq \hh_{2a-1,a}$ it is sufficient to prove that $$([f],[f]) \notin \mathfrak{B}^{\mathrm{ns}}_a.$$

\vskip 3pt

For $0 \leq n \leq 2a-5$, let $\widetilde{\mathcal{M}}_{2a-1-n}^{\mathrm{ns}}(\PP^1,a-n;2n)$ denote the moduli space of finite stable maps $f: C \to \PP^1$ of degree $a-n$ with $2n$ markings and such that $C$ has genus $2a-1-n$, non-separating nodes and $h^0(C, f^*\mathcal{O}_{\PP^1}(1))=2$. Then $ \widetilde{\mathcal{M}}_{2a-1-n}^{\mathrm{ns}}(\PP^1,a-n;2n)$ is smooth of dimension $ \dim \widetilde{\mathcal{G}}_{2a-1,a}^{\mathrm{ns}}-2n.$
Define
$$q_n: \widetilde{\mathcal{M}}_{2a-1-n}^{\mathrm{ns}}(\PP^1,a-n;2n) \to \widetilde{\mathcal{G}}_{2a-1,a}^{\mathrm{ns}}$$
by sending a map $f:C\to \PP^1$ with markings $x_1, \ldots, x_n, y_1, \ldots, y_n$ to the stable map $$\widetilde{f}:X \to \PP^1,$$
where $X:=C \cup R_1\cup \ldots \cup R_n$, with $R_i \cong \PP^1$, $R_i \cap C=\{x_i,y_i\}$, and with $\tilde{f}_{C}=f$ and $\deg(\tilde{f}_{R_i})=1$ for $i=1, \ldots, n$. We let $\widetilde{X}$ be the stabilization of $X$. Set
\begin{equation}\label{def-zn}
Z_n:=q_n^{-1}(\mbox{pr}_1(\mathfrak{B}^{\mathrm{ns}}_a)).
\end{equation}
Each component of $Z_n$ has dimension at least $\dim \widetilde{\mathcal{G}}_{2a-1,a}^{\mathrm{ns}}-2n-1$.

\vskip 3pt

The following theorem, giving a classification of points in $Z_n$, will be proved in the last section of the paper and is crucial in completing the proof of Schreyer's conjecture.

\begin{thm} \label{hard-thm}
Fix $a\geq 3$ and $0 \leq n \leq 2a-5$, and let $C$ be an integral nodal curve of genus $2a-1-n$  with a unique pencil $f: C \to \PP^1$ of degree $a-n$. Choose pairs  $(x_i,y_i)$ of
smooth distinct points of $C$ for $i=1, \ldots, n$. Assume $f(x_i) \neq f(y_i)$ and that for any subset $S \seq \{x_1,y_1, \ldots, x_n,y_n\}$ of cardinality at most $n$,
one has $h^0\bigl(C,f^*(\mathcal{O}_{\PP^1}(2))(\sum_{s \in S}s)\bigr) =3.$
Then if $\pi^{-1}\bigl([\widetilde{X}]\bigr)$ consists of a unique point, then $[f,x_1,y_1, \ldots, x_n,y_n]$ does not lie in $Z_n$.
\end{thm}
The hypothesis that $C$ has a unique pencil of degree $a-n$ amounts to $W^1_{a-n}(C)=\{A \}$, with $A$ being a base-point free line bundle with $h^0(A)=2$, which implies $C$ 
has gonality $a-n$. If $C$ is an integral, nodal curve of genus $2a-1-n$ having a  pencil $f$ of degree $a-n$ satisfying $h^0(C, f^*\mathcal{O}_{\PP^1}(2))=3$ and if 
$\{x_1,y_1, \ldots, x_n, y_n\}$ is a general set of points on $C$, then $$h^0\Bigl(C,f^*(\mathcal{O}_{\PP^1}(2))(\sum_{s \in S}s)\Bigr) =3$$ for a subset $S \seq \{x_1,y_1, \ldots, 
x_n, y_n\}$  with $|S|\leq n+1$. Indeed, $h^0\bigl(C,\omega_C\otimes f^*\mathcal{O}_{\PP^1}(-2)\bigr)=n+1$, and since $(x_i,y_i)$ are general, we find
$h^0\bigl(C, \omega_C\otimes f^*\mathcal{O}_{\PP^1}(-2)(-\sum_{s\in S}s)\bigr)=n+1-|S|$. Thus
$h^0\bigl(C, f^*\mathcal{O}_{\PP^1}(2)(\sum_{s \in S} s)\bigr)=3$.

\vskip 2pt

In particular,  the following is an immediate corollary of Theorem \ref{hard-thm}.
\begin{thm} \label{outside-BN-divisor}
Let $C$ be a curve of genus $g$ and gonality $k\leq \frac{g+1}{2}$ satisfying bpf-linear growth. Assume that there is a unique $A \in G^1_k(C)$ and that we have $h^0(C, A^{\otimes 2})=3$. Choose general pairs of points $(x_i,y_i)$ on $C$ for $i=1, \ldots, g-2k+1$ and
let $D$ be the nodal curve obtained by identifying  $x_i$ and $y_i$ for all $i$. Then $\pi^{-1}([D])\cap \overline{\mathfrak{BN}}=\emptyset$.
\end{thm}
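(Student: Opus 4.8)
The plan is to argue by contradiction. Suppose $[f]\in\overline{\mathfrak{BN}}$, where $[f]$ is the unique point of $\pi^{-1}([D])$ produced in Proposition \ref{adm-limit}; recall that $f$ has a distinguished component isomorphic to $C$, over which it restricts to the degree $k$ pencil $|A|$. Since $\overline{\mathfrak{BN}}$ is closed in $\hh_{2g-2k+1,g-k+1}$ and the point $[f]$ is cut out by the data $\bigl(C,(x_i,y_i)_i\bigr)$ in families, it suffices to exhibit one degeneration of this data for which the corresponding admissible cover lies outside $\overline{\mathfrak{BN}}$. Following Proposition \ref{hyp-deg}, I would let $C$ degenerate, inside the closure of the locus of smooth $k$-gonal curves in $\mm_g$, to a $(k-2)$-nodal curve with hyperelliptic normalisation $C'$ of genus $g-k+2$, and let the $g-2k+1$ marked pairs degenerate so that the limit of $D$ is the stable model $D^{\flat}$ of the curve appearing in Proposition \ref{hyp-deg} and Remark \ref{gen-hyp-deg}; writing $[f^{\flat}]$ for the limiting admissible cover, $[f^{\flat}]\in\pi^{-1}([D^{\flat}])$ and $f^{\flat}$ restricts over $C'$ to the hyperelliptic pencil. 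It then suffices to show $[f^{\flat}]\notin\overline{\mathfrak{BN}}$.

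So suppose $[f^{\flat}]\in\overline{\mathfrak{BN}}$. Since $\mathfrak{BN}=\mathrm{pr}_1(\mathfrak{BN}')$ and $\mathfrak{BN}'$ is the closure of the locus of pairs of \emph{distinct} pencils on smooth curves, there is a one-parameter family $\bigl([f_t],[g_t]\bigr)$, $t\in\Delta^{*}$, of such pairs, with $C_t$ smooth of genus $2g-2k+1$, with $A_t:=f_t^{*}\mathcal{O}_{\PP^1}(1)$ and $B_t:=g_t^{*}\mathcal{O}_{\PP^1}(1)$ distinct base point free pencils of degree $g-k+1$, and with $[f_t]\to[f^{\flat}]$ as $t\to0$. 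Put $M_t:=A_t\otimes B_t\in\mathrm{Pic}^{2g-2k+2}(C_t)$. As $A_t\not\cong B_t$ gives $H^0(C_t,A_t^{\vee}\otimes B_t)=0$, the base point free pencil trick produces an injection $H^0(A_t)\otimes H^0(B_t)\hookrightarrow H^0(C_t,M_t)$, whence $h^0(C_t,M_t)\geq4$ for every $t\neq0$.

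After a finite base change and semistable reduction, this family extends to $\mathcal{C}\to\Delta$ whose central fibre $\widetilde{D}$ is a semistable curve with stable model $D^{\flat}$ --- that is, $\widetilde{D}$ is $C'$ with chains of smooth rational curves inserted at its $g-k-1$ nodes --- and $M_t$ extends to a line bundle $\mathcal{M}$ on the smooth total space. For \emph{any} such extension, upper semicontinuity of $h^0$ gives $h^0(\widetilde{D},\mathcal{M}|_{\widetilde{D}})\geq h^0(C_t,M_t)\geq4$, and two extensions differ by twisting by an integral combination of the components of $\widetilde{D}$. The key step is to recognise $\mathcal{M}|_{\widetilde{D}}$: because both $A_t$ and $B_t$ degenerate, over the hyperelliptic component $C'$, to the pencil $A$ itself --- for $A_t$ this is built into the definition of $[f^{\flat}]$, while for $B_t$ it follows, exactly as in the proof of Proposition \ref{adm-limit}, from the admissible cover combinatorics together with bpf-linear growth for the hyperelliptic curve $C'$, which forces the aspect over $C'$ to be the minimal pencil --- the restriction of the flat limit of $M_t$ to $C'$ is $A^{\otimes2}$ twisted by an effective divisor supported on the attaching points $\{x_i,y_i\}$; matching total degrees, $\deg M_t=2g-2k+2=\deg\bigl(A^{\otimes2}(\sum_{i}(x_i+y_i))\bigr)$, one may choose the extension $\mathcal{M}$ so that $\mathcal{M}|_{\widetilde{D}}$ restricts to $A^{\otimes2}\bigl(\sum_{i=1}^{g-k-1}(x_i+y_i)\bigr)$ on $C'$ and is trivial on every rational component. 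This is precisely the line bundle $L$ of Proposition \ref{hyp-deg} (in the generality of Remark \ref{gen-hyp-deg}), so $4\leq h^0(\widetilde{D},\mathcal{M}|_{\widetilde{D}})=h^0(\widetilde{D},L)=3$, a contradiction. Hence $[f^{\flat}]\notin\overline{\mathfrak{BN}}$, and therefore $\pi^{-1}([D])\cap\overline{\mathfrak{BN}}=\emptyset$ for $C$ general.

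The main obstacle is the identification of the flat limit of $M_t=A_t\otimes B_t$ carried out in the last paragraph: $A_t$ and $B_t$ are distinct pencils colliding only at $t=0$, so $M_t$ does not simply degenerate to the square of the limiting pencil, and one has to control how the extra degree can leak onto the inserted rational chains, showing it is accounted for entirely by the twists $\mathcal{O}\bigl(\sum m_j\widetilde{D}_j\bigr)$ and that, after an appropriate twist, one lands exactly on the bundle $L$ governed by Proposition \ref{hyp-deg}; this is an admissible-cover (equivalently, limit-linear-series) computation, and the configuration of marked points in Proposition \ref{hyp-deg} is tailored so that it goes through. A secondary point requiring care is checking that the degeneration of $\bigl(C,(x_i,y_i)\bigr)$ chosen in the first paragraph is a genuine limit of the configurations in the theorem and reproduces exactly the set-up of Proposition \ref{hyp-deg} and Remark \ref{gen-hyp-deg}, so that those results apply to $\widetilde{D}$.
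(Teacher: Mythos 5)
Your proposal is correct and follows essentially the same route as the paper: specialise to a $(k-2)$-nodal curve with hyperelliptic normalisation in the configuration of Proposition \ref{hyp-deg}, observe that the product of the two distinct pencils has $h^0\geq 4$ by the base point free pencil trick, and contradict semicontinuity against the $h^0=3$ computation of Proposition \ref{hyp-deg} and Remark \ref{gen-hyp-deg} after twisting by components of the central fibre. The only (cosmetic) difference is that the paper realises the limit of $M_t$ concretely as $\mathcal{L}_1\otimes\mathcal{L}_2$ for the two limiting stable maps obtained by contracting the admissible-cover targets, which makes the identification of the central fibre, up to twists by the inserted rational chains, automatic rather than an extension to be chosen.
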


The proof of Theorem \ref{hard-thm} (and thus that of Theorem \ref{outside-BN-divisor}) is surprisingly involved and takes up the last section of the paper. To avoid disrupting the logical flaw of the paper, we assume Theorem \ref{hard-thm} and proceed towards the proof of Theorems \ref{goneric0} and \ref{goneric}.

\section{Extending the Eagon-Northcott Divisor} \label{extending}

We begin by recalling the definition of the Eagon-Northcott from the Introduction.
\begin{definition}
The Eagon-Northcott divisor $\mathcal{EN} \seq \mathcal{H}_{2g-2k+1,g-k+1}$ is defined as the locus of covers $[f: C \to \PP^1]$ such that $\dim K_{g-k,1}(C,\omega_C)>g-k$.
\end{definition}

We extend $\mathcal{EN}$ as a determinantal locus over a partial compactification of  $\mathcal{H}_{2g-2k+1,g-k+1}$. From Theorem \ref{hirsch} and
\cite[Proposition 4.10]{SSW}, observe that we have the following equality of subsets of $\H_{2g-2k+1,g-k+1}$:
$$\mathfrak{BN}=\mathcal{EN}.$$

\vskip 3pt

We construct an extension of  $\mathcal{EN}$ on the moduli space $  \widetilde{\mathcal{G}}_{2a-1,a}^{\mathrm{ns}}:= \widetilde{\mathcal{M}}_{2a-1}^{\mathrm{ns}}(\PP^1,a)$
of stable maps from the previous section. Precisely, we construct the extended Eagon-Northcott divisor
$$\widetilde{\mathcal{EN}} \seq  \widetilde{\mathcal{G}}_{2a-1,a}^{\mathrm{ns}}$$ by studying the minimal free resolutions of the scrolls attached to a cover
$[f: C \to \PP^1] \in \widetilde{\mathcal{G}}_{2a-1,a}^{\mathrm{ns}}$. Set $A:=f^*(\OO_{\PP^1}(1))\in W^1_{a}(C)$. Since $f$ is finite and flat, $f_*\OO_C$ is locally free
and we write $f_* \mathcal{O}_C\cong \OO_{\PP^1}\oplus \mathcal{E}_f^{\vee}$, where $\mathcal{E}_f$ is the so-called \emph{Tschirnhausen
bundle} of $f$, admitting a splitting
$$\mathcal{E}_f=\OO_{\PP^1}(e_1)\oplus \cdots \oplus \OO_{\PP^1}(e_{a-1}),$$
where $e_1\leq \ldots \leq e_{a-1}$ are the \emph{scrollar invariants} of $f$ and satisfy $e_1+\cdots+e_{a-1}=3a-2$.  Dualising the morphism $\mathcal{O}_{\PP^1} \to f_* \mathcal{O}_C$ leads
to an exact sequence
$$0 \longrightarrow \mathcal{E}_f \longrightarrow f_*\omega_f \longrightarrow \mathcal{O}_{\PP^1} \longrightarrow 0.$$

We tensor the morphism $f^*(\mathcal{E}_f) \to \omega_f$ by $f^*\omega_{\PP^1}$ and  produce a morphism
$\displaystyle f^*(\mathcal{E}_f(-2)) \twoheadrightarrow \omega_C$, inducing a closed immersion, see \cite{schreyer1}, or \cite{casnati-ekedahl}
$$j: C \to  \PP\bigl(\mathcal{E}_f(-2)\bigr).$$

Note that  $\mathcal{E}_f(-2)$ is a globally generated vector bundle on $\PP^1$ with  $\deg (\mathcal{E}_f(-2))=a$. Denoting by
$\varphi:X:=\PP\bigl(\mathcal{E}_f(-2)\bigr)\rightarrow \PP^1$ the
associated $(a-1)$-dimensional scroll, we  have a morphism
$$\iota:X \to \PP\bigl(H^0(\PP^1,\mathcal{E}_f(-2))\bigr)\cong \PP^{2a-2},$$
such that $\displaystyle \iota \circ j: C \to \PP^{2a-2}$ is the canonical morphism of $C$, cf.\ \cite{schreyer1}. Observe that since $C$ has no disconnected nodes, $\omega_C$
is globally generated. Also observe
that if $h^0(C,A^{\otimes 2})=3$, then $e_1\geq 3$ and  $\iota$ is a closed immersion.

\vskip 4pt

The  Picard group of the scroll $X$ is generated by the class of a ruling $R:=\varphi^{*}(\mathcal{O}_{\PP^1}(1))$ together with $H:=\OO_X(1)$. Note that
$H^0(X,H)\cong H^0(C,\omega_C)$, whereas $H^0(X,R)\cong H^0(C,A)$ and  $H^0(X,
\OO_X(H-R))\cong H^0(C,\omega_C\otimes A^{\vee})$. As already mentioned in the Introduction, the Eagon-Northcott complex,
explicitly describes the minimal free resolution of
$$\Gamma_X(H):=\bigoplus_{q \in \mathbb{Z}}H^0(X, H^{\otimes q}),$$
as a $\text{Sym}\ H^0(X,H)$-module, see \cite{schreyer1}. This gives that
$$K_{p,0}(X, H)=0 \mbox{ for  } p>0, \mbox{  whereas  } \ K_{p,q}(X, H)=0,  \mbox{ for  } q\geq 2 \mbox{ and any } p,$$
as well as the canonical identifications
$$
K_{p,1}(X,H)\cong  \bigwedge^{p+1} H^0(X,H-R)\otimes \mbox{Sym}^{p-1} H^0(X,R) \otimes \bigwedge^2 H^0(X, R)$$
$$\cong \bigwedge^{p+1} H^0(C,\omega_C\otimes A^{\vee})\otimes
\mbox{Sym}^{p-1} H^0(C,A) \otimes \bigwedge^2 H^0(C,A).\\
$$
In particular, $b_{p,1}(X,H)=p{a\choose p+1}$.

\vskip 4pt

We record the following lemma, while skipping the proof:
\begin{lem} \label{coh-van}
We have the vanishing $H^i(X,H^{\otimes q})=0$, for $i \geq 1$ and  $q \geq 0$. Furthermore, $H^i(X,\mathcal{O}_{X}(-H))=0$, for $i \geq 2$.
\end{lem}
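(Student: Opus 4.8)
The plan is to exploit the fibration $\varphi : X = \PP(\mathcal{E}_f(-2)) \to \PP^1$ and compute cohomology by pushing forward along $\varphi$, using the projection formula together with the standard description of $\varphi_* \OO_X(qH)$ in terms of symmetric powers of $\mathcal{E}_f(-2)$. Recall that $\OO_X(H)$ is the relative hyperplane bundle, so for $q \geq 0$ one has $R^i\varphi_* \OO_X(qH) = 0$ for $i > 0$ and $\varphi_* \OO_X(qH) \cong \mathrm{Sym}^q\bigl(\mathcal{E}_f(-2)\bigr)$. Since $\mathcal{E}_f(-2)$ is globally generated of rank $a-1$ and degree $a$ on $\PP^1$ (as recorded just before the lemma), every summand $\OO_{\PP^1}(e_i - 2)$ has $e_i - 2 \geq 0$; more importantly each summand is a direct sum of line bundles of nonnegative degree, hence so is every symmetric power $\mathrm{Sym}^q(\mathcal{E}_f(-2))$, which is therefore a globally generated bundle on $\PP^1$ of nonnegative degree on each summand.

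From here the first assertion follows by the Leray spectral sequence: $H^i(X, qH) = H^i(\PP^1, \mathrm{Sym}^q(\mathcal{E}_f(-2)))$ for all $i$ and all $q \geq 0$, and this vanishes for $i \geq 1$ because $\mathrm{Sym}^q(\mathcal{E}_f(-2))$ is a sum of line bundles $\OO_{\PP^1}(d)$ with $d \geq 0$, all of which have vanishing $H^1$ on $\PP^1$ (and there is no higher cohomology on a curve). For the second assertion, I would use the relative Euler sequence, or directly the formula for pushing forward $\OO_X(-H)$: one has $\varphi_* \OO_X(-H) = 0$ and, dualizing the relative Euler sequence, $R^{a-2}\varphi_* \OO_X(-H) \cong \det(\mathcal{E}_f(-2))^{\vee} = \OO_{\PP^1}(-a)$, with all other $R^i\varphi_*\OO_X(-H) = 0$ (here $a - 2 = \dim X - 1$ is the relative dimension, and $-H$ is too positive in the fiber direction to have $R^0$ but not positive enough to kill the top pushforward). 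The Leray spectral sequence then gives $H^i(X, -H) = H^{i - (a-2)}(\PP^1, \OO_{\PP^1}(-a))$. For $i = a-2$ this is $H^0(\PP^1, \OO(-a)) = 0$ since $a \geq 3$; for $i = a - 1$ it is $H^1(\PP^1, \OO(-a))$, which is nonzero; and for all other $i$, in particular for $2 \leq i \leq a - 3$, it vanishes because the shifted index lands outside $\{0,1\}$. Thus $H^i(X, \OO_X(-H)) = 0$ for $i \geq 2$, as claimed — the possibly nonzero group sits in degree $a - 1 = \dim X$, consistent with Serre duality.

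The main technical point, and the step I would be most careful about, is getting the higher direct images $R^\bullet\varphi_* \OO_X(-H)$ exactly right: one must track the relative dimension $a - 2$ and the twist by the relative canonical/determinant bundle correctly, so as to be sure the only surviving cohomology of $\OO_X(-H)$ lives in top degree $\dim X = a-1$ and nowhere in the intermediate range $[2, a-3]$. This is where the hypothesis $a \geq 3$ enters (for $a = 2$ the scroll is a Hirzebruch surface and the statement would need separate bookkeeping, but that case is excluded). Everything else is a routine application of the Leray spectral sequence over a base curve, where degeneration is automatic since only two rows ($i = 0$ and the top $R^{a-2}$) are nonzero and the base has cohomological dimension one. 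This is precisely the sort of computation the authors chose to omit, so reconstructing it cleanly is the whole content of the proof.
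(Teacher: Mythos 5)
The paper explicitly records this lemma ``while skipping the proof,'' so there is no argument of the authors to compare against; your route through the Leray spectral sequence for $\varphi\colon X=\PP(\mathcal{E}_f(-2))\to\PP^1$ is certainly the intended one. Your treatment of the first assertion is correct: $R^{>0}\varphi_*\OO_X(qH)=0$ for $q\geq 0$, $\varphi_*\OO_X(qH)\cong \mathrm{Sym}^q(\mathcal{E}_f(-2))$ is a direct sum of line bundles of nonnegative degree because $\mathcal{E}_f(-2)$ is globally generated, and such bundles have no higher cohomology on $\PP^1$.

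The second half, however, contains a genuine error that makes the argument internally inconsistent. You assert $R^{a-2}\varphi_*\OO_X(-H)\cong\det(\mathcal{E}_f(-2))^{\vee}=\OO_{\PP^1}(-a)$ and deduce that $H^{a-1}(X,\OO_X(-H))\cong H^1(\PP^1,\OO_{\PP^1}(-a))$ is \emph{nonzero}. Since $a\geq 3$, the degree $a-1$ lies in the range $i\geq 2$, so if your computation were right the lemma would be \emph{false}; your closing sentence (``the possibly nonzero group sits in degree $a-1=\dim X$,'' hence vanishing for $i\geq 2$) contradicts your own displayed formula. What is wrong is the computation of the direct images: on each fibre $\PP^{a-2}$ the restriction of $\OO_X(-H)$ is $\OO_{\PP^{a-2}}(-1)$, and $H^j(\PP^{a-2},\OO(-1))=0$ for \emph{all} $j$ once $a-2\geq 1$, because nonvanishing of the top cohomology of $\OO_{\PP^n}(-m)$ requires $m\geq n+1$, i.e.\ here $1\geq a-1$. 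Hence by cohomology and base change $R^j\varphi_*\OO_X(-H)=0$ for every $j$, and therefore $H^i(X,\OO_X(-H))=0$ for all $i$, a fortiori for $i\geq 2$. The identification of the top direct image with $\det(\mathcal{E}_f(-2))^{\vee}$ is the relative Serre dual of $\varphi_*\OO_X\cong\OO_{\PP^1}$ and applies to $\OO_X(-(a-1)H)$, the twist by minus the rank of $\mathcal{E}_f(-2)$, not to $\OO_X(-H)$. With this correction the argument closes and in fact proves more than the lemma states.
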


Define the kernel bundles $M_{H}$ and $M_{\omega_C}$ on $X$ and $C$ respectively by the exact sequences
\begin{align*}
&0 \longrightarrow M_H \longrightarrow H^0(X,H)\otimes \mathcal{O}_{X} \longrightarrow H \longrightarrow  0 \\
&0 \longrightarrow M_{\omega_C} \longrightarrow H^0(C, \omega_C) \otimes \mathcal{O}_C \longrightarrow \omega_C \longrightarrow 0.
\end{align*}
As $C\seq X$ is linearly normal, $j^*M_H \cong M_{\omega_C}$. Note that $H^0\bigl(X, \bigwedge^p M_H\bigr)=H^0\bigl(C, \bigwedge^p M_{\omega_C}\bigr)=0$, for $p\geq 1$.
Further, we record the following short exact
sequences:
\begin{align}
& 0 \longrightarrow \bigwedge^{p+1} M_H \otimes \OO_X\bigl((q-1)H\bigr) \longrightarrow  \bigwedge^{p+1} H^0(X,H) \otimes \OO_X\bigl((q-1)\bigr) \longrightarrow
\bigwedge^{p} M_H \otimes \OO_X\bigl(qH\bigr)
\longrightarrow 0, \label{ses-kernel-H}\\
& 0 \longrightarrow \bigwedge^{p+1} M_{\omega_C} \otimes \omega_C^{\otimes (q-1)} \longrightarrow \bigwedge^{p+1} H^0(C,\omega_C) \otimes \omega_C^{\otimes (q-1)} \longrightarrow
\bigwedge^{p} M_{\omega_C} \otimes \omega_C^{\otimes q} \longrightarrow 0 \label{ses-kernel-C}.
\end{align}

We shall make use of the following vanishing statement.
\begin{lem} \label{coh-van-kb}
We have $H^i\bigl(X,\bigwedge^p M_H \otimes H^{\otimes q})\bigr)=0$ for $i \geq 2$ and arbitrary $p, q \geq 0$.
\end{lem}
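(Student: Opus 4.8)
The plan is to prove the vanishing $H^i\bigl(X,\bigwedge^p M_H \otimes H^{\otimes q}\bigr)=0$ for $i\geq 2$ and $p,q\geq 0$ by a double induction: on $p$ (for fixed considerations), using the kernel bundle sequence (\ref{ses-kernel-H}), with the cohomology of the scroll itself (Lemma \ref{coh-van}) as the base case. The key point is that the scroll $X$ is a $\PP^{a-2}$-bundle over $\PP^1$, so $X$ has cohomological dimension $2$ in a strong sense: any coherent sheaf has vanishing $H^i$ for $i\geq 3$, and $H^2$ is controlled by pushforward to $\PP^1$.

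First I would record the base case $p=0$: here $\bigwedge^0 M_H\otimes H^{\otimes q}=\OO_X(qH)$, and Lemma \ref{coh-van} already gives $H^i(X,\OO_X(qH))=0$ for $i\geq 1$ when $q\geq 0$, so in particular for $i\geq 2$. For the inductive step, fix $p\geq 1$ and assume the statement holds for $p-1$ (with all $q\geq 0$). Twisting the defining sequence of $M_H$, or rather using the Koszul-type sequence (\ref{ses-kernel-H}) in the form
$$0 \longrightarrow \bigwedge^{p+1} M_H \otimes \OO_X\bigl((q-1)H\bigr) \longrightarrow \bigwedge^{p+1} H^0(X,H) \otimes \OO_X\bigl((q-1)H\bigr) \longrightarrow \bigwedge^{p} M_H \otimes \OO_X(qH) \longrightarrow 0,$$
I would take the long exact sequence in cohomology. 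For $i\geq 2$, the middle term $\bigwedge^{p+1}H^0(X,H)\otimes H^i\bigl(X,\OO_X((q-1)H)\bigr)$ vanishes by Lemma \ref{coh-van} provided $q-1\geq 0$, and the term $H^{i+1}\bigl(X,\bigwedge^{p+1}M_H\otimes\OO_X((q-1)H)\bigr)$ vanishes because $X$ has dimension $a-1$ but more relevantly because for $i\geq 2$ we get $i+1\geq 3$ and we can also bootstrap — so we obtain $H^i\bigl(X,\bigwedge^p M_H\otimes\OO_X(qH)\bigr)=0$ for all $q\geq 1$. The remaining case is $q=0$: here one uses the sequence $0\to\bigwedge^p M_H\to\bigwedge^p H^0(X,H)\otimes\OO_X\to\bigwedge^{p-1}M_H\otimes H\to 0$; for $i\geq 2$ the middle term vanishes (constant sheaf on $X$, and $H^i(X,\OO_X)=0$ for $i\geq 1$ since $X$ is a rational scroll) and $H^{i-1}\bigl(X,\bigwedge^{p-1}M_H\otimes H\bigr)$ — if $i\geq 3$ this is handled by induction on $p$, and the genuinely delicate case $i=2$ needs $H^1\bigl(X,\bigwedge^{p-1}M_H\otimes H\bigr)$, which is \emph{not} covered by the inductive hypothesis.

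The main obstacle is therefore controlling the $H^1$ term that appears when chasing down to $i=2,q=0$. I would handle this by instead running the induction so as to prove the slightly stronger statement that for $i\geq 2$ the groups vanish \emph{together with} enough vanishing of $H^1\bigl(X,\bigwedge^p M_H\otimes H^{\otimes q}\bigr)$ in the relevant range (e.g. for $q\geq 2$, or for $q=1$ when $p$ is small relative to $a$), which is exactly the sort of input that the Eagon-Northcott resolution of $\Gamma_X(H)$ furnishes: the vanishing $K_{p,q}(X,H)=0$ for $q\geq 2$ translates, via the kernel-bundle sequences, into vanishing of $H^1\bigl(X,\bigwedge^p M_H\otimes H^{\otimes(q-1)}\bigr)$ for $q\geq 2$. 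Alternatively — and this is probably cleanest — one pushes forward to $\PP^1$: writing $\varphi:X\to\PP^1$, one has $R^{j}\varphi_*\bigl(\bigwedge^p M_H\otimes H^{\otimes q}\bigr)=0$ for $j\geq 2$ (fibres are $\PP^{a-2}$ and on each fibre $\bigwedge^p M_H\otimes H^{\otimes q}$ is globally generated with vanishing higher cohomology for $q\geq 0$, by the analogue of the statement for $\PP^{a-2}$), so the Leray spectral sequence degenerates enough to reduce the claim to $H^i\bigl(\PP^1, R^j\varphi_*(\cdots)\bigr)=0$ for $i+j\geq 2$ with $i\leq 1$, which is automatic since $H^i(\PP^1,-)=0$ for $i\geq 2$ and $R^j\varphi_*=0$ for $j\geq 2$. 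This reduces everything to the vanishing of ordinary cohomology on $\PP^1$ of the sheaves $\varphi_*\bigl(\bigwedge^p M_H\otimes H^{\otimes q}\bigr)$ and $R^1\varphi_*\bigl(\bigwedge^p M_H\otimes H^{\otimes q}\bigr)$ — but since all higher cohomology on $\PP^1$ vanishes above degree $1$, the total cohomology $H^i(X,-)$ for $i\geq 2$ is forced to be zero. I would present the push-forward argument as the main line and keep the kernel-sequence induction as a fallback, flagging that the essential geometric input is simply that $X$ is a projective bundle over a curve, hence of cohomological dimension $\leq 2$, combined with the positivity of $M_H\otimes H$ on the fibres coming from linear normality of $C\subseteq X$.
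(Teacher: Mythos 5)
Your proposal does contain the paper's key idea --- namely that the residual $H^1$ groups are identified with Koszul groups $K_{*,q'}(X,H)$ of weight $q'\geq 2$, which vanish by the Eagon--Northcott resolution --- but the route you designate as the main line has a genuine gap, and the inductive step of your fallback is run in a direction where it does not close up. For the Leray argument: you establish (or assert) only that $H^i(\PP^1,-)=0$ for $i\geq 2$ and that $R^j\varphi_*\bigl(\bigwedge^p M_H\otimes H^{\otimes q}\bigr)=0$ for $j\geq 2$, and then declare the vanishing of $H^i(X,-)$ for $i\geq 2$ to be ``automatic''. It is not: the term $E_2^{1,1}=H^1\bigl(\PP^1, R^1\varphi_*(\bigwedge^p M_H\otimes H^{\otimes q})\bigr)$ contributes to $H^2\bigl(X,\bigwedge^p M_H\otimes H^{\otimes q}\bigr)$ and is killed by nothing you prove. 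To close this you must show $R^1\varphi_*=0$ as well, i.e.\ vanishing of $H^1$ on the fibres; since $M_H$ restricted to a fibre $F\cong\PP^{a-2}$ is an extension of $M_{\OO_F(1)}\cong\Omega_F(1)$ by a trivial bundle, this reduces to Bott vanishing for $\Omega^j_F(j+q)$ with $q\geq 0$ --- true, but it is precisely the computation you skipped. Relatedly, your framing claim that \emph{every} coherent sheaf on $X$ has $H^i=0$ for $i\geq 3$ is false: $X$ has dimension $a-1$ and $H^{a-1}(X,\omega_X)\neq 0$.

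In the fallback, for $q\geq 1$ you use sequence (\ref{ses-kernel-H}) with $\bigwedge^p M_H\otimes H^{\otimes q}$ as the quotient term, so the connecting map lands in $H^{i+1}\bigl(\bigwedge^{p+1}M_H\otimes H^{\otimes(q-1)}\bigr)$: both the exterior power and the cohomological degree go \emph{up}, the hypothesis on $p-1$ is never usable, and ``we can also bootstrap'' is circular (the chain only terminates when $i+q$ is large relative to $a$). The paper runs the same sequence the other way: since $H^i(X,H^{\otimes q})=0$ for $i\geq 1$, $q\geq 0$ by Lemma \ref{coh-van}, the group $H^i\bigl(\bigwedge^p M_H\otimes H^{\otimes q}\bigr)$ is a quotient of $H^{i-1}\bigl(\bigwedge^{p-1}M_H\otimes H^{\otimes(q+1)}\bigr)$, and after $i-1$ steps one lands on $H^1\bigl(\bigwedge^{p-i+1}M_H\otimes H^{\otimes(q+i-1)}\bigr)\cong K_{p-i,q+i}(X,H)$, which vanishes because $q+i\geq 2$. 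You articulate exactly this identification when you treat the case $q=0$, so the correct proof is in fact present in your write-up; make the descending chase the uniform argument for all $q$ and discard the other two routes.
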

\begin{proof}
By the sequence (\ref{ses-kernel-H}) and Lemma \ref{coh-van}, it suffices to show $H^{i-1}\bigl(\bigwedge^{p-1}M_H\otimes H^{\otimes (q+1)})\bigr)=0$. Continuing in this
fashion, it suffices to show $H^1\bigl(X, \bigwedge^{p-i+1}M_H\otimes H^{\otimes (q+i-1)})\bigr)=0$. Since $H^1\bigl(X,H^{\otimes (q+i-1)}\bigr)=0$, this amounts to $K_{p,q+i}(X, H)=0$,
which holds as $q+i \geq 2$.
\end{proof}

\begin{lem} \label{betaf}
There is an injective restriction map of linear syzygies
$$\alpha_f: K_{a-1,1}(X,H)\rightarrow K_{a-1,1}(C, \omega_C).$$
The map $\alpha_f$ is surjective if and only if the restriction map
$$ \beta_f : \; H^0\Bigl(X, \bigwedge^{a-2}M_H\otimes H^{\otimes 2}\Bigr) \to H^0\Bigl(C, \bigwedge^{a-2}M_{\omega_C} \otimes \omega_C^{\otimes 2}\Bigr)$$
is injective.
\end{lem}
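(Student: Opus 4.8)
The strategy is the standard one for relating syzygies of a subvariety to those of an ambient scroll via the kernel-bundle description of Koszul cohomology. Recall that $K_{a-1,1}(X,H)$ is computed, after identifying the relevant graded pieces, as a cokernel/kernel in the Koszul complex; concretely $K_{a-1,1}(X,H)\cong H^0\bigl(X,\bigwedge^{a-1}M_H\otimes H\bigr)$ modulo the image of $\bigwedge^{a}H^0(X,H)$ — or more precisely, using the exact sequence (\ref{ses-kernel-H}) with $p=a-1$, $q=1$ together with $H^0\bigl(X,\bigwedge^{a-1}M_H\bigr)=0$, one gets a short exact sequence expressing $K_{a-1,1}(X,H)$ as a subspace of $H^0\bigl(X,\bigwedge^{a-2}M_H\otimes H^{\otimes 2}\bigr)$; and similarly for $C$ with $M_{\omega_C}$ and $\omega_C$. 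The identity $j^*M_H\cong M_{\omega_C}$ together with linear normality of $C\seq X$ is what makes the two pictures compatible.

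\textbf{Key steps.} First I would write down, for both $X$ and $C$, the long exact sequence in cohomology obtained from (\ref{ses-kernel-H}) and (\ref{ses-kernel-C}) in the case $p=a-1$, $q=1$. Using $H^0\bigl(X,\bigwedge^{a-1}M_H\bigr)=0=H^0\bigl(C,\bigwedge^{a-1}M_{\omega_C}\bigr)$ and $H^i\bigl(X,\bigwedge^{\bullet}M_H\otimes H^{\otimes\bullet}\bigr)=0$ for $i\geq 2$ (Lemma \ref{coh-van-kb}), the relevant portion reads
$$0\to K_{a-1,1}(X,H)\to H^0\Bigl(X,\bigwedge^{a-2}M_H\otimes H^{\otimes 2}\Bigr)\to \bigwedge^{a-2}H^0(X,H)\otimes H^0(X,H^{\otimes 2})$$
and analogously over $C$. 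The natural restriction maps (pull-back along $j$) give a commutative ladder between these two sequences; the middle vertical arrow is precisely $\beta_f$, and the left vertical arrow is $\alpha_f$. Second, I would establish injectivity of $\alpha_f$: this follows because $C\subseteq X$ is projectively normal (linearly normal of degree $\geq$ twice the genus etc., so that all the maps $\mathrm{Sym}^q H^0(X,H)\to H^0(C,\omega_C^{\otimes q})$ are surjective and the resolution of $C$ is a quotient/subcomplex argument), hence a highest-order linear syzygy on $X$ cannot die on $C$ — alternatively, one compares with the fact that the scroll $X$ is cut out by quadrics and its linear strand injects into that of $C$ by a diagram chase using that $H^1(X,\mathcal{I}_{C/X}\otimes H^{\otimes q})$ vanishes in the relevant range (which can be read off from Lemma \ref{coh-van} applied to $\OO_X(-H)$ and the fact that $\mathcal{I}_{C/X}$ is resolved by a short Eagon-Northcott-type complex). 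Third, with both rows exact and $\alpha_f$ injective, a routine snake-lemma/diagram chase shows that $\alpha_f$ is \emph{surjective} if and only if the map on the quotients $\mathrm{coker}(\beta_f)\to (\text{the cokernel over }C)$ is injective in the appropriate degree, and one reduces this — again using the commutativity of the ladder and injectivity of the rightmost vertical map (which is injective because $\mathrm{Sym}$-to-$H^0$ restriction maps are compatible and the relevant map $\bigwedge^{a-2}H^0(X,H)\otimes H^0(X,H^{\otimes 2})\to \bigwedge^{a-2}H^0(C,\omega_C)\otimes H^0(C,\omega_C^{\otimes 2})$ is visibly injective since $H^0(X,H)\ISOM H^0(C,\omega_C)$ and $H^0(X,H^{\otimes 2})\hookrightarrow H^0(C,\omega_C^{\otimes 2})$) — to the statement that $\alpha_f$ surjects iff $\beta_f$ is injective.

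\textbf{Main obstacle.} The routine parts are the cohomology vanishings, which are already packaged in Lemmas \ref{coh-van} and \ref{coh-van-kb}, and the identification of the Koszul cohomology with kernels of the displayed maps. The genuinely delicate point is verifying that $C\subseteq X$ is sufficiently positively embedded for the ladder to have \emph{exact} rows and injective outer vertical maps — in particular the surjectivity of $H^0(X,H^{\otimes q})\to H^0(C,\omega_C^{\otimes q})$ is false in general (it fails exactly in degree $1$ unless we are careful, since $H^0(X,H)\cong H^0(C,\omega_C)$ is an isomorphism, not just a surjection, and in degree $2$ it is only an injection), so one must track precisely where injectivity versus surjectivity of these restriction maps is used, and confirm that the only place an obstruction can appear is the injectivity of $\beta_f$ in the stated degree. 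This bookkeeping — isolating $\beta_f$ as the \emph{sole} obstruction — is the crux, and it rests on the vanishing $H^1\bigl(X,\mathcal{I}_{C/X}\otimes H^{\otimes q}\bigr)=0$ for $q=1,2$, which one extracts from the short resolution of $\mathcal{I}_{C/X}$ on the scroll together with Lemma \ref{coh-van}.
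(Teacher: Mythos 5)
Your overall strategy (kernel bundles, restriction along $j$, a diagram chase using linear normality) is in the right spirit, but the presentation of $K_{a-1,1}$ that you build everything on is wrong, and the error is fatal to the logic. You claim a left-exact sequence
$$0\to K_{a-1,1}(X,H)\to H^0\Bigl(X,\bigwedge^{a-2}M_H\otimes H^{\otimes 2}\Bigr)\to \bigwedge^{a-2}H^0(X,H)\otimes H^0(X,H^{\otimes 2}),$$
but the second arrow here is already injective: it is induced on global sections by the inclusion of sheaves $\bigwedge^{a-2}M_H\otimes H^{\otimes 2}\hookrightarrow \bigwedge^{a-2}H^0(X,H)\otimes H^{\otimes 2}$ coming from $M_H\seq H^0(X,H)\otimes\mathcal{O}_X$. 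So your sequence would force $K_{a-1,1}(X,H)=0$, contradicting $b_{a-1,1}(X,H)=a-1$. The correct kernel-bundle presentation realizes $K_{a-1,1}(X,H)$ as a \emph{quotient}, namely $H^0\bigl(X,\bigwedge^{a-1}M_H\otimes H\bigr)/\bigwedge^{a}H^0(X,H)$, and similarly over $C$; since both $H^0\bigl(X,\bigwedge^{a-1}M_H\otimes H\bigr)$ and $H^0\bigl(C,\bigwedge^{a-1}M_{\omega_C}\otimes\omega_C\bigr)$ sit inside $\bigwedge^{a-1}H^0\otimes H^0$ of the respective line bundles, which linear normality identifies, the middle restriction map is injective and one reads off at once that $\alpha_f$ is injective (no projective normality needed) and that $\alpha_f$ is surjective iff $\mathrm{res}_C: H^0\bigl(X,\bigwedge^{a-1}M_H\otimes H\bigr)\to H^0\bigl(C,\bigwedge^{a-1}M_{\omega_C}\otimes\omega_C\bigr)$ is surjective.

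A second, structural problem: your ladder places $\alpha_f$ and $\beta_f$ as parallel vertical arrows between two left-exact rows, so any snake-lemma argument from it can only compare kernels with kernels and cokernels with cokernels; it can never produce the stated equivalence that $\alpha_f$ is \emph{surjective} iff $\beta_f$ is \emph{injective}, which requires the two maps to sit on opposite sides of a sub/quotient presentation. The ingredient that effects this flip, and which is absent from your sketch, is the vanishing $K_{a-2,2}(X,H)=0$ for the scroll (from the Eagon--Northcott resolution), equivalently $H^1\bigl(X,\bigwedge^{a-1}M_H\otimes H\bigr)=0$: it makes the Koszul map $\bigwedge^{a-1}H^0(X,H)\otimes H^0(X,H)\to H^0\bigl(X,\bigwedge^{a-2}M_H\otimes H^{\otimes 2}\bigr)$ \emph{surjective} with kernel exactly $H^0\bigl(X,\bigwedge^{a-1}M_H\otimes H\bigr)$. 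Comparing this presentation with its analogue over $C$, whose middle terms are again identified by linear normality, the snake lemma gives $\mathrm{coker}(\mathrm{res}_C)\cong\ker(\beta_f)$, which is the assertion. Finally, your worry about the surjectivity of $H^0(X,H^{\otimes 2})\to H^0(C,\omega_C^{\otimes 2})$ and the vanishing of $H^1\bigl(X,I_{C/X}\otimes H^{\otimes 2}\bigr)$ is a red herring: the row over $C$ is never required to be exact on the right.
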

\begin{proof}
The map $\alpha_f$ fits into a commutative diagram with exact rows:
$$
\xymatrix{
0 \ar[r]  &\bigwedge^{a} H^0(X,H)  \ar[r] \ar[d]^{\cong} &H^0\bigl(X,\bigwedge^{a-1}M_H \otimes H\bigr)
\ar[r]  \ar[d]^{\mathrm{res}_C} &K_{a-1,1}(X,H) \ar[r] \ar[d]^{\alpha_f} &0 \\
0 \ar[r] &\bigwedge^{a} H^0(C,\omega_C)  \ar[r] &H^0\bigl(C,\bigwedge^{a-1}M_{\omega_C} \otimes \omega_C\bigr) \ar[r] &K_{a-1,1}(C, \omega_C) \ar[r] &0
}.$$
Since $C\seq X$ is linearly normal, it follows that $\mbox{res}_C$ is injective, therefore $\alpha_f$ is injective as well. On the other hand, by the snake lemma
the surjectivity of $\alpha_f$ is equivalent to the surjectivity of $\mathrm{res}_C$.
From the kernel bundle description of Koszul cohomology, we write
$$K_{a-2,2}(X,H)=\mbox{Ker}\Bigl\{H^1\bigl(X,\bigwedge^{a-1} M_H\otimes H\bigr)\rightarrow \bigwedge^{a-1}H^0(X,H)\otimes H^1(X,H)\Bigr\}.$$
Since $H^1(X,H)=0$ and $K_{a-2,2}(X,H)=0$, it follows $H^1\bigl(X,\bigwedge^{a-1}M_H \otimes H\bigr)=0$. We write the following diagram with exact rows:
{\small{$$\xymatrix@C=1em{
0 \ar[r]  & H^0\bigl(X,\bigwedge^{a-1}M_H \otimes H) \ar[r] \ar[d]^{\mbox{res}_C} & \bigwedge^{a-1} H^0(X,H) \otimes H^0(X,H)
\ar[r]  \ar[d]^{\cong} &H^0\bigl(X, \bigwedge^{a-2}M_H \otimes H^{\otimes 2}\bigr) \ar[r] \ar[d]^{\beta_f} &0 \\
0 \ar[r]  & H^0\bigl(C, \bigwedge^{a-1}M_{\omega_C} \otimes \omega_C\bigr) \ar[r] & \bigwedge^{a-1} H^0(C,\omega_C) \otimes H^0(C,\omega_C)
\ar[r] &H^0\bigl(C, \bigwedge^{a-2}M_{\omega_C} \otimes \omega_C^{\otimes 2}\bigr)
},$$}}

By the snake lemma, the surjectivity of $\mbox{res}_C$ is equivalent to the injectivity of $\beta_f$.
\end{proof}

Koszul duality gives an isomorphism $K_{a-2,2}(C,\omega_C)\cong K_{a-1,1}(C,\omega_C)^{\vee}$, therefore we have a surjection
$$H^0\bigl(C,\bigwedge^{a-2} M_{\omega_C}\otimes \omega_C^{\otimes 2}\bigr)\longrightarrow H^0\bigl(C,\bigwedge^{a-2} M_{\omega_C}\otimes \omega_C^{\otimes 2}\bigr)/
\bigwedge^{a-1} H^0(C,\omega_C)\otimes H^0(C,\omega_C) \cong K_{a-1,1}(C,\omega_C)^{\vee}.$$
The composition of this map with $\alpha_f^{\vee}$ gives rise to a surjection
$$\psi_f:H^0\bigl(C,\bigwedge^{a-2}M_{\omega_C}\otimes \omega_C^{\otimes 2}\bigr)\rightarrow K_{a-1,1}(X,H)^{\vee}.$$
Because $K_{a-2,2}(X,H)=0$, from the second diagram in the proof of Lemma \ref{betaf}, it follows $\psi_f\circ \beta_f=0$.

\begin{lem} \label{ident-kern}
We have a natural isomorphism $\mathrm{Ker}(\psi_f) \cong H^2\bigl(X, \bigwedge^{a}M_H \otimes I_{C/X}\bigr)^{\vee}$.
\end{lem}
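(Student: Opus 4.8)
The plan is to reinterpret the surjection $\psi_f$ as a pairing and to read off its kernel as a Serre-orthogonal complement. The starting point is the observation that, since $M_{\omega_C}$ has rank $g-1 = 2a-2$ and $\det M_{\omega_C} \cong \omega_C^\vee$, one has $\bigl(\bigwedge^a M_{\omega_C}\bigr)^\vee \otimes \omega_C \cong \bigwedge^{a-2} M_{\omega_C} \otimes \omega_C^{\otimes 2}$, so that by Serre duality on $C$ the source $H^0\bigl(C, \bigwedge^{a-2} M_{\omega_C} \otimes \omega_C^{\otimes 2}\bigr)$ of $\psi_f$ is \emph{perfectly paired} with $H^1\bigl(C, \bigwedge^a M_{\omega_C}\bigr)$. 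I then need three compatibility statements. (i) From the kernel-bundle sequence $0 \to \bigwedge^a M_H \to \bigwedge^a H^0(X,H)\otimes \OO_X \to \bigwedge^{a-1} M_H \otimes H \to 0$ together with $H^0(X, \bigwedge^a M_H) = 0$ and $H^1(X, \OO_X) = 0$, the cokernel presentation of $K_{a-1,1}(X,H)$ given by the top row of the first diagram in the proof of Lemma~\ref{betaf} identifies $K_{a-1,1}(X,H)$ with the image of the connecting map, which here is all of $H^1(X, \bigwedge^a M_H)$. (ii) By naturality of that sequence under restriction to $C$ (using $j^* M_H \cong M_{\omega_C}$), the map $\mathrm{res}_C$ of Lemma~\ref{betaf} induces on cokernels exactly the restriction map $\rho \colon H^1(X, \bigwedge^a M_H) \to H^1(C, \bigwedge^a M_{\omega_C})$ coming from $\OO_X \twoheadrightarrow \OO_C$; thus $\alpha_f$ becomes $\rho$, whose image lies in $K_{a-1,1}(C, \omega_C) \subseteq H^1(C, \bigwedge^a M_{\omega_C})$. (iii) The kernel-bundle proof of the duality theorem for the Koszul cohomology of curves (see \cite{green-koszul}, or \cite{aprodu-nagel}) shows that this same Serre pairing induces the Koszul-duality pairing $K_{a-1,1}(C,\omega_C) \times K_{a-2,2}(C,\omega_C) \to \C$ through the inclusion $K_{a-1,1}(C,\omega_C) \hookrightarrow H^1(C, \bigwedge^a M_{\omega_C})$ and the surjection $H^0(C, \bigwedge^{a-2} M_{\omega_C}\otimes \omega_C^{\otimes 2}) \twoheadrightarrow K_{a-2,2}(C,\omega_C)$.

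Granting (i)--(iii), unwinding the definition $\psi_f = \alpha_f^\vee \circ q$ — where $q$ is the surjection $H^0(C, \bigwedge^{a-2}M_{\omega_C}\otimes\omega_C^{\otimes 2}) \twoheadrightarrow K_{a-2,2}(C,\omega_C) \cong K_{a-1,1}(C,\omega_C)^\vee$ — shows that, for $s \in K_{a-1,1}(X,H) \cong H^1(X, \bigwedge^a M_H)$ and $v$ in the source of $\psi_f$, the scalar $\psi_f(v)(s)$ equals, up to a sign that is irrelevant here, the Serre pairing on $C$ of $\rho(s) \in H^1(C, \bigwedge^a M_{\omega_C})$ with $v$. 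Hence $\mathrm{Ker}(\psi_f)$ is precisely the annihilator of $\mathrm{im}(\rho)$ for this perfect pairing, and therefore
$$\mathrm{Ker}(\psi_f) \;\cong\; \Bigl(H^1\bigl(C, \bigwedge^a M_{\omega_C}\bigr)/\mathrm{im}(\rho)\Bigr)^{\vee} \;=\; \mathrm{coker}(\rho)^\vee,$$
canonically.

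It remains to compute $\mathrm{coker}(\rho)$. The exact sequence $0 \to \bigwedge^a M_H \otimes I_{C/X} \to \bigwedge^a M_H \to \bigwedge^a M_{\omega_C} \to 0$ on $X$ (using that $\bigwedge^a M_H$ restricts to $\bigwedge^a M_{\omega_C}$ on $C$) yields the cohomology segment $H^1(X, \bigwedge^a M_H) \stackrel{\rho}{\longrightarrow} H^1(C, \bigwedge^a M_{\omega_C}) \to H^2(X, \bigwedge^a M_H \otimes I_{C/X}) \to H^2(X, \bigwedge^a M_H)$, and $H^2(X, \bigwedge^a M_H) = 0$ by Lemma~\ref{coh-van-kb} (the case $q = 0$), whence $\mathrm{coker}(\rho) \cong H^2(X, \bigwedge^a M_H \otimes I_{C/X})$. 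Dualising the displayed isomorphism gives the asserted natural isomorphism $\mathrm{Ker}(\psi_f) \cong H^2(X, \bigwedge^a M_H \otimes I_{C/X})^\vee$. The single genuinely delicate step is (iii): tracking the Koszul-duality isomorphism $K_{a-2,2}(C,\omega_C) \cong K_{a-1,1}(C,\omega_C)^\vee$ through the kernel-bundle presentation and matching it with Serre duality on $\bigwedge^a M_{\omega_C}$, compatibly with the restriction maps from $X$; everything else is a routine diagram chase with the exact sequences already recorded in Section~\ref{extending}.
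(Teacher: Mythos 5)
Your argument is correct and follows essentially the same route as the paper: identify $K_{a-1,1}(X,H)$ with $H^1(X,\bigwedge^a M_H)$, use $\bigwedge^a M_{\omega_C}^{\vee}\cong \bigwedge^{a-2}M_{\omega_C}\otimes \omega_C$ and Serre duality to recognize $\psi_f^{\vee}$ as the restriction map $H^1(X,\bigwedge^a M_H)\to H^1(C,\bigwedge^a M_{\omega_C})$, and compute its cokernel from the ideal-sheaf sequence together with Lemma \ref{coh-van-kb}. You merely spell out in steps (i)--(iii) the compatibility of the Koszul-duality pairing with Serre duality, which the paper asserts without elaboration.
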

\begin{proof}
Since $H^1(X,\OO_X)=0$, the description of Koszul cohomology via kernel bundles yields the identification $K_{a-1,1}(X,H)^{\vee} \cong H^1\bigl(X, \bigwedge^{a}{M_H}\bigr)^{\vee}$.
Using that $\bigwedge^{a-2}M_{\omega_C}\otimes \omega_C\cong \bigwedge^a
M_{\omega_C}^{\vee}$, Serre-Duality gives the isomorphism
\begin{align*}
H^0\bigl(C, \bigwedge^{a-2}M_{\omega_C} \otimes \omega_C^{\otimes 2}\bigr)^{\vee} \cong H^1\bigl(C, \bigwedge^{a }M_{\omega_C}\bigr),
\end{align*}
which enables us to identify the dual map $\psi_f^{\vee}$ with the restriction
$$H^1\bigl(X,\bigwedge^{a}{M_H}\bigr) \rightarrow H^1\bigl(C, \bigwedge^{a}M_{\omega_C}\bigr).$$
Then $\mbox{Ker}(\psi_f)\cong \mbox{Coker}\bigl(\psi_f^{\vee})\bigr)^{\vee} \cong H^2\bigl(X, \bigwedge^{a}M_H \otimes I_{C/X}\bigr)^{\vee}$, using also Lemma \ref{coh-van-kb}.
\end{proof}

\vskip 4pt

Putting the above pieces together, we have constructed a natural map
$$\beta_f  : \;  H^0\bigl(X,\bigwedge^{a-2}M_H \otimes H^{\otimes 2}\bigr) \to H^2\bigl(X,\bigwedge^{a}M_H \otimes I_{C/X}\bigr)^{\vee}$$
such that  $b_{a-1,1}(C, \omega_C) > a-1$ if and only if $\beta_f$ fails to be injective. We shall see that both sides of this map have the same dimension.
This allows us to construct $\widetilde{\mathcal{EN}}$ as the degeneracy locus of a morphism between vector bundles of the same rank on the space of stable maps.

\begin{lem} \label{constant-dim-for-grauert}
We have:
\begin{align*}h^0\bigl(X,\bigwedge^{a-2}M_H \otimes H^{\otimes 2}\bigr)&=h^2\bigl(X, \bigwedge^{a}M_H \otimes I_{C/X}\bigr)=(2a-2) {2a-1 \choose a}-a+1. \\
\end{align*}
\end{lem}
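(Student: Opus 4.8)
The plan is to compute the two displayed dimensions independently, in each case collapsing the relevant long exact sequence to an Euler characteristic by feeding in the vanishing statements already established. The only structural point is an injectivity of a restriction map that is forced by the vanishing $H^0\bigl(X,\bigwedge^p M_H\bigr)=H^0\bigl(C,\bigwedge^p M_{\omega_C}\bigr)=0$ for $p\geq 1$; everything else is bookkeeping with binomial coefficients.

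For the left--hand side I would specialise the sequence (\ref{ses-kernel-H}) to $p=a-2$, $q=2$:
$$0 \longrightarrow \bigwedge^{a-1} M_H \otimes H \longrightarrow \bigwedge^{a-1} H^0(X,H) \otimes H \longrightarrow \bigwedge^{a-2} M_H \otimes H^{\otimes 2} \longrightarrow 0.$$
In the proof of Lemma \ref{betaf} it is shown that $H^1\bigl(X,\bigwedge^{a-1}M_H\otimes H\bigr)=0$ (from $H^1(X,H)=0$ and $K_{a-2,2}(X,H)=0$), and the first diagram there gives the exact sequence $0\to\bigwedge^a H^0(X,H)\to H^0\bigl(X,\bigwedge^{a-1}M_H\otimes H\bigr)\to K_{a-1,1}(X,H)\to 0$, whence $h^0\bigl(X,\bigwedge^{a-1}M_H\otimes H\bigr)={2a-1\choose a}+b_{a-1,1}(X,H)={2a-1\choose a}+(a-1)$ using $b_{p,1}(X,H)=p{a\choose p+1}$. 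Since $h^0(X,H)=h^0(C,\omega_C)=2a-1$, taking cohomology in the displayed sequence gives at once
$$h^0\bigl(X,\bigwedge^{a-2}M_H \otimes H^{\otimes 2}\bigr)=(2a-1){2a-1\choose a}-{2a-1\choose a}-(a-1)=(2a-2){2a-1\choose a}-a+1.$$

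For the right--hand side I would use the identification $M_H|_C\cong M_{\omega_C}$ and tensor the ideal sheaf sequence of $C\seq X$ by the locally free sheaf $\bigwedge^a M_H$, obtaining
$$0\longrightarrow \bigwedge^a M_H\otimes I_{C/X}\longrightarrow \bigwedge^a M_H\longrightarrow \bigwedge^a M_{\omega_C}\longrightarrow 0.$$
Lemma \ref{coh-van-kb} (case $p=a$, $q=0$) gives $H^2\bigl(X,\bigwedge^a M_H\bigr)=0$, and since $H^0\bigl(C,\bigwedge^a M_{\omega_C}\bigr)=0$ the restriction map $H^1\bigl(X,\bigwedge^a M_H\bigr)\to H^1\bigl(C,\bigwedge^a M_{\omega_C}\bigr)$ is injective, so the long exact sequence collapses to $h^2\bigl(X,\bigwedge^a M_H\otimes I_{C/X}\bigr)=h^1\bigl(C,\bigwedge^a M_{\omega_C}\bigr)-h^1\bigl(X,\bigwedge^a M_H\bigr)$. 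As in the proof of Lemma \ref{ident-kern}, the kernel--bundle description of Koszul cohomology identifies $H^1\bigl(X,\bigwedge^a M_H\bigr)$ with $K_{a-1,1}(X,H)$, so $h^1\bigl(X,\bigwedge^a M_H\bigr)=b_{a-1,1}(X,H)=a-1$. It then remains to check $h^1\bigl(C,\bigwedge^a M_{\omega_C}\bigr)=(2a-2){2a-1\choose a}$. Since $C$ is a nodal curve of arithmetic genus $2a-1$ with $h^0\bigl(C,\bigwedge^a M_{\omega_C}\bigr)=0$, this follows directly from Riemann--Roch for $\bigwedge^a M_{\omega_C}$, which has rank ${2a-2\choose a}$ and degree $-(4a-4){2a-3\choose a-1}$; a cleaner variant is to use Serre duality to rewrite the number as $h^0\bigl(C,\bigwedge^{a-2}M_{\omega_C}\otimes\omega_C^{\otimes 2}\bigr)$ and read it off (\ref{ses-kernel-C}) with $p=a-2$, $q=2$, after observing that $\bigwedge^{a-1}M_{\omega_C}\otimes\omega_C$ is isomorphic to its own Serre dual and hence has Euler characteristic $0$. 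Either route produces the asserted value, matching the left--hand side.

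I do not expect a serious obstacle: the argument is routine once the sequences are in place. The points requiring care are the collapsing of the two long exact sequences (which rests on the vanishings above together with $K_{a-2,2}(X,H)=0$) and the terminal numerical identity $(2a-2)\bigl(2{2a-3\choose a-1}+{2a-2\choose a}\bigr)=(2a-2){2a-1\choose a}$, which is just Pascal's rule applied twice; the standing hypothesis $a\geq 3$ is what keeps all the exterior--power indices $a-2,a-1,a$ meaningful.
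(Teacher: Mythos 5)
Your computation of $h^0\bigl(X,\bigwedge^{a-2}M_H\otimes H^{\otimes 2}\bigr)$ is exactly the paper's argument, and your arithmetic throughout (including the Pascal-rule identities) is correct. The problem is in the second half. You run the long exact sequence of
$$0\longrightarrow \bigwedge^a M_H\otimes I_{C/X}\longrightarrow \bigwedge^a M_H\longrightarrow \bigwedge^a M_{\omega_C}\longrightarrow 0$$
and claim that the restriction map $r: H^1\bigl(X,\bigwedge^a M_H\bigr)\to H^1\bigl(C,\bigwedge^a M_{\omega_C}\bigr)$ is injective ``since $H^0\bigl(C,\bigwedge^a M_{\omega_C}\bigr)=0$''. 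That is a non sequitur: the vanishing of $H^0\bigl(C,\bigwedge^a M_{\omega_C}\bigr)$ gives injectivity of the \emph{previous} map $H^1\bigl(X,\bigwedge^a M_H\otimes I_{C/X}\bigr)\to H^1\bigl(X,\bigwedge^a M_H\bigr)$, whose image is precisely $\mathrm{Ker}(r)$; so what you actually get is $h^2\bigl(X,\bigwedge^a M_H\otimes I_{C/X}\bigr)=h^1\bigl(C,\bigwedge^a M_{\omega_C}\bigr)-h^1\bigl(X,\bigwedge^a M_H\bigr)+\dim\mathrm{Ker}(r)$, and the vanishing of $\mathrm{Ker}(r)$ is exactly what still has to be proved. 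It is true, but not for the reason you give: $r$ is the dual of the surjection $\psi_f$, and its injectivity ultimately rests on the injectivity of $\alpha_f: K_{a-1,1}(X,H)\to K_{a-1,1}(C,\omega_C)$, which comes from the linear normality of $C\seq X$ (Lemma \ref{betaf}). This is precisely the content of Lemma \ref{ident-kern}, which the paper's proof simply invokes: it identifies $h^2\bigl(X,\bigwedge^a M_H\otimes I_{C/X}\bigr)$ with $\dim\mathrm{Ker}(\psi_f)=h^0\bigl(C,\bigwedge^{a-2}M_{\omega_C}\otimes\omega_C^{\otimes 2}\bigr)-b_{a-1,1}(X,H)$ and then evaluates the first term by Riemann--Roch as $(4a-2){2a-2\choose a}$. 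Your route is otherwise equivalent (your $h^1\bigl(C,\bigwedge^a M_{\omega_C}\bigr)$ is Serre-dual to the paper's $h^0\bigl(C,\bigwedge^{a-2}M_{\omega_C}\otimes\omega_C^{\otimes 2}\bigr)$, and your $h^1\bigl(X,\bigwedge^a M_H\bigr)=a-1$ is the paper's $b_{a-1,1}(X,H)$), so the fix is simply to replace your one-line justification of the injectivity of $r$ by an appeal to Lemma \ref{ident-kern}, or to the surjectivity of $\psi_f$.
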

\begin{proof}
As already pointed out  $H^1\bigl(X,\bigwedge^{a-1}M_H \otimes H\bigr)=0$. Therefore
$$ h^0\bigl(X,\bigwedge^{a-2}M_H \otimes H^{\otimes 2}\bigr)=(2a-1) {2a-1 \choose a}-h^0\bigl(X,\bigwedge^{a-1}M_H \otimes H\bigr),$$
by the short exact sequence (\ref{ses-kernel-H}). We further have a short exact sequence
$$0 \longrightarrow \bigwedge^{a}H^0(X,H) \longrightarrow H^0\bigl(X,\bigwedge^{a-1}M_H \otimes H\bigr) \longrightarrow K_{a-1,1} (X,H) \longrightarrow 0,$$
thus using that $b_{a-1,1}(X,H)=a-1$, we find $h^0\bigl(X,\bigwedge^{a-1}M_H \otimes H\bigr)=a-1+{2a-1 \choose a}$, which leads to the claimed formula for $h^0\bigl(X,\bigwedge^{a-2}M_H\otimes H^{\otimes 2}\bigr)$.

\vskip 3pt

Using Lemma \ref{ident-kern}, we compute:
$$h^2\bigl(X,\bigwedge^{a}M_H\otimes I_{C/X}\bigr)=\mbox{dim}(\mbox{Ker } \psi_f)=h^0\bigl(C,\bigwedge^{a-2}M_{\omega_C}\otimes \omega_C^{\otimes 2}\bigr)-b_{a-1,1}(X,H).$$
Recall that $b_{a-1,1}(X,H)=a-1$.  The Riemann-Roch theorem (still valid for a nodal curve $C$ with no disconnecting nodes)
gives $$ h^0\bigl(C,\bigwedge^{a-2}M_{\omega_C}\otimes \omega_C^{\otimes 2}\bigr)=\chi\bigl(C,\bigwedge^{a-2} M_{\omega_C} \otimes \omega_C^{\otimes 2}\bigr)=(4a-2){2a-2 \choose a},$$
which finishes the proof.
\end{proof}

\vskip 4pt

We now explain how the above considerations can be carried out in a relative setting. Let $$\xymatrix{
\mathcal{C} \ar[r]^-f \ar[rd]_{\nu} &\mathcal{P} \ar[d]^{\mu}\\
& \widetilde{\mathcal{G}}_{2a-1,a}^{\mathrm{ns}}
}$$
be the universal degree $a$ cover, where $\mathcal{P} = \widetilde{\mathcal{G}}_{2a-1,a}^{ns} \times \PP^1$.
The universal Tschirnhausen bundle $\mathcal{E}_f$ on $\mathcal{P}$ fits into an exact sequence:
$$ 0 \longrightarrow \mathcal{E}_{f} \longrightarrow f_{*} \omega_{f} \longrightarrow \mathcal{O}_{\mathcal{P}} \longrightarrow 0.$$

We further have the projective bundle
$\varphi: \; \mathcal{X}:=\PP(\mathcal{E}_f \otimes \omega_{\mu}) \to \mathcal{P}$
and a closed immersion
$j: \mathcal{C} \hookrightarrow \mathcal{X}.$ Set $h:= \mu\circ \varphi: \;  \PP(\mathcal{E}_f \otimes \omega_{\mu}) \to \widetilde{\mathcal{G}}_{2a-1,a}^{\mathrm{ns}}.$
By Grauert's Theorem, $h_{*}\bigl(\OO_{\mathcal{X}}(1)\bigr)$ is a vector bundle of rank $2a-1$.
Define the determinant $ \xi:=\det h_{*}\bigl(\mathcal{O}_{\mathcal{X}}(1)\bigr).$
The evaluation map $h^*h_*\OO_{\mathcal{X}}(1)\rightarrow \OO_{\mathcal{X}}(1)$ is furthermore surjective, thus we can define the kernel bundle $\mathcal{M}$ by
$$0 \longrightarrow \mathcal{M} \longrightarrow h^*h_*(\OO_{\mathcal{X}}(1)\rightarrow \mathcal{O}_{\mathcal{X}}(1)\longrightarrow 0.$$ Then $\mathcal{M}$ restricts to the kernel bundle
$M_H$ for each scroll induced by an element $[C\rightarrow \PP^1]$.
Note that $j$ is defined by the surjection
$f^*(\mathcal{E}_f \otimes \omega_{\mu}) \rightarrow \omega_f \otimes f^*\omega_{\mu} \cong \omega_{\nu} ,$
hence  $\mathcal{O}_{{\mathcal{C}}}(1) \cong \omega_{\nu}$.
Set $$ \mathcal{F}_1:=h_*\Bigl(\bigwedge^{a-2}\mathcal{M} \otimes \OO_{\mathcal{X}}(2)\Bigr) \otimes \xi^{\vee},$$
which is a vector bundle of rank $(2a-2) {2a-1 \choose a}-a+1$, by Lemma \ref{constant-dim-for-grauert}.
Set $$ \mathcal{F}_2:=h_*\Bigl(\bigwedge^{a-2}\mathcal{M} \otimes \OO_{\mathcal{C}}(2)\Bigr) \otimes \xi^{\vee},$$ which is a vector bundle of rank
$(2a-2) {2a-1 \choose a}$. Restriction to $\mathcal{C}$ induces a morphism
$$\beta: \; \mathcal{F}_1 \to \mathcal{F}_2. $$
Relative duality gives the isomorphism
$$R^1\nu_*\bigl(\bigwedge^a \cM_{|\mathcal{C}}\bigr) \cong \Bigl(\nu_*\bigl(\bigwedge^{a}\mathcal{M}^{\vee}_{|\mathcal{C}}\otimes \omega_{\nu}\bigr)\Bigr)^{\vee} \cong \mathcal{F}_2^{\vee},
$$
using $\det (\mathcal{M})\cong h^*\xi\otimes \OO_{\X}(-1)$. Define the rank $a-1$ vector bundle by
$\mathcal{F}_3:=R^1h_*\Bigl(\bigwedge^{a}\mathcal{M}\Bigr)^{\vee},$
The dual of the restriction morphism $\psi^{\vee }: \; R^1h_*\bigl(\bigwedge^{a}\mathcal{M}\bigr) \to R^1\nu_*\bigl(\bigwedge^{a}\mathcal{M}_{|\mathcal{C}}\bigr)$  gives a morphism
$$\psi: \; \mathcal{F}_2 \to \mathcal{F}_3$$
with fibre over a moduli point $[f:C\rightarrow \PP^1]$ equal to $\psi_f$. As already explained, $\displaystyle \psi \circ \beta=0.$

\vskip 3pt

We get a short exact sequence of vector bundles over $\ph$:
$$0 \longrightarrow R^1h_*\bigl(\bigwedge^{a}\mathcal{M}\bigr) \longrightarrow R^1\nu_*\bigl(\bigwedge^{a}\mathcal{M}\otimes \OO_{\mathcal{C}}\bigr) \longrightarrow
R^2h_*\bigl(\bigwedge^{a}\mathcal{M}\otimes I_{\mathcal{C}/\X}\bigr)\longrightarrow 0, $$
where $\mathcal{F}_4:=R^2h_*\bigl(\bigwedge^{a}\mathcal{M} \otimes I_{\mathcal{C}/\X}\bigr)$ is a vector bundle of rank $(2a-2) {2a-1 \choose a}-a+1$ by Lemma \ref{constant-dim-for-grauert}.
Thus we may canonically identify
$$\text{Ker}(\psi) \cong \mathcal{F}^{\vee}_4$$ and we have an induced morphism between vector bundles
$ \beta: \mathcal{F}_1 \to \mathcal{F}^{\vee}_4$
globalizing the morphisms $\beta_f$ as the moduli point $[f]\in \ph$ varies. Since $\mbox{rk}(\mathcal{F}_1)=\mbox{rk}(\mathcal{F}_4)$, we define the extended Eagon-Northcott divisor
$$\widetilde{\mathcal{EN}} \seq  \ph$$ as the degeneracy locus of $\beta$. By the results of the previous chapter, this is a genuine divisor.

\vskip 4pt

Define $\mathcal{EN}^{\mathrm{sm}}$ as the union of all components of $\widetilde{\mathcal{EN}}$ containing an element $[f: C \to \PP^1]$, with $C$ being a smooth curve and all ramification simple.
The following lemma is a direct consequence of Theorem \ref{outside-BN-divisor}.
\begin{lem} \label{almost-done}
Let $C$ be a curve of genus $g$ and gonality $k\leq \frac{g+1}{2}$ satisfying bpf-linear growth. For $i=1, \ldots, g-2k+1$, choose pairs $(x_i,y_i)$ of general points on $C$ and let
$B$ be the semistable curve given as the union of $C$ with $g-2k+1$ smooth rational curves $R_i$  meeting the rest of $B$ precisely at $x_i, y_i$.
Let  $$[f: B \to \PP^1] \in \widetilde{\mathcal{G}}_{2g-2k+1,g-k+1}^{\mathrm{ns}}$$ be a morphism with $\mathrm{deg}(f_{C})=k$ and $f_{R_i}$ an isomorphism. Assume that $f_{C}$ is the unique minimal pencil on $C$ and, further, $h^0(C,f^*\mathcal{O}_{\PP^1}(2))=3$. Then $[f] \notin \mathcal{EN}^{\mathrm{sm}}$.
\end{lem}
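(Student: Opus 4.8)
The plan is to deduce the statement directly from Theorem \ref{outside-BN-divisor} by a degeneration argument, so I argue by contradiction and suppose $[f]\in\mathcal{EN}^{\mathrm{sm}}$. Then $[f]$ lies on an irreducible component $Z$ of $\widetilde{\mathcal{EN}}$ which meets the open locus $\mathcal{H}_{2g-2k+1,g-k+1}\seq\ph$ of covers with smooth source. Since $Z$ is irreducible and that locus is open, $Z\cap\mathcal{H}_{2g-2k+1,g-k+1}$ is a dense open subset of $Z$, hence $Z=\overline{Z\cap\mathcal{H}_{2g-2k+1,g-k+1}}$. By construction the fibre of the defining morphism $\beta$ over a cover with smooth source is the map $\beta_f$ of Lemma \ref{betaf}, so $\widetilde{\mathcal{EN}}$ restricts over $\mathcal{H}_{2g-2k+1,g-k+1}$ to the Eagon--Northcott divisor $\mathcal{EN}$, which we have already identified set-theoretically with the Brill--Noether divisor $\mathfrak{BN}$. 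Therefore $Z\cap\mathcal{H}_{2g-2k+1,g-k+1}\seq\mathfrak{BN}$, and taking closures in $\ph$ gives $[f]\in Z\seq\overline{\mathfrak{BN}}$.

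Next I would apply curve selection to obtain a smooth pointed curve $(\Delta,0)$ together with a morphism $\Delta\to\ph$ sending $0$ to $[f]$ and $\Delta\setminus\{0\}$ into $\mathfrak{BN}$; concretely, a family $g_t\colon C_t\to\PP^1$ of simply branched degree $g-k+1$ covers with each $C_t$ a smooth curve of genus $2g-2k+1$ satisfying $[g_t]\in\mathfrak{BN}$ for $t\neq 0$, and whose limiting stable map at $t=0$ is $f\colon B\to\PP^1$. Viewing $[g_t]$ for $t\neq 0$ as a point of the Hurwitz space inside the proper space of admissible covers $\hh_{2g-2k+1,g-k+1}$, after a finite base change $\Delta'\to\Delta$ the family extends to a morphism $\Delta'\to\hh_{2g-2k+1,g-k+1}$ with a well-defined limit $[g_0]$, and since the family lies in $\mathfrak{BN}$ away from the origin we obtain $[g_0]\in\overline{\mathfrak{BN}}\seq\hh_{2g-2k+1,g-k+1}$.

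It then remains to compute $\pi([g_0])$. The stabilisation morphisms $\ph\to\mm_{2g-2k+1}$ and $\hh_{2g-2k+1,g-k+1}\to\mm_{2g-2k+1}$ coincide on the common dense open locus $\mathcal{H}_{2g-2k+1,g-k+1}$, and $\mm_{2g-2k+1}$ is separated, so the two families have the same limit in $\mm_{2g-2k+1}$, namely the stable model of $B$. Contracting each of the $g-2k+1$ rational curves $R_i$, which carries exactly the two special points $x_i,y_i$, identifies $x_i$ with $y_i$; hence the stable model of $B$ is precisely the nodal curve $[D]\in\mm_{2g-2k+1}$ of Theorem \ref{outside-BN-divisor}. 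Thus $[g_0]\in\pi^{-1}([D])\cap\overline{\mathfrak{BN}}$, contradicting that theorem, and we conclude $[f]\notin\mathcal{EN}^{\mathrm{sm}}$.

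The step I expect to be the main obstacle is the comparison of the two compactifications in the last paragraph: one must check carefully that a stable-map degeneration to $[f]$ corresponds, after stabilising the source, to an admissible-cover degeneration lying over $[D]$, so that the limit $[g_0]$ genuinely lands in the fibre $\pi^{-1}([D])$ to which Theorem \ref{outside-BN-divisor} applies. The remaining ingredients --- density of $Z\cap\mathcal{H}_{2g-2k+1,g-k+1}$ in $Z$, the identification of $\widetilde{\mathcal{EN}}$ over the smooth locus with $\mathcal{EN}=\mathfrak{BN}$, properness of the space of admissible covers, and curve selection --- are routine.
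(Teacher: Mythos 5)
Your proof is correct and takes essentially the same route as the paper: both arguments reduce to Theorem \ref{outside-BN-divisor} by observing that the components of $\mathcal{EN}^{\mathrm{sm}}$ are closures of their intersection with the locus of smooth-source covers, where $\widetilde{\mathcal{EN}}$ restricts to $\mathcal{EN}=\mathfrak{BN}$, and then transferring the degeneration to the admissible-covers compactification over the stable model $[D]$. The paper phrases this as an equality of images of closed sets in $\overline{\mathcal{M}}_{2g-2k+1}$ under the two proper stabilisation maps, whereas you spell out the same comparison via curve selection and separatedness of $\overline{\mathcal{M}}_{2g-2k+1}$; the content is identical.
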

\begin{proof}
Consider the closure $\overline{\mathcal{EN}}^{\mathrm{sm}} \seq \mm_{2g-2k+1}(\PP^1,g-k+1)$ in the moduli space of stable maps. We have the projections $\pi':\mm_{2g-2k+1}(\PP^1,g-k+1)\to
\overline{\mathcal{M}}_{2g-2k+1}$, as well as the projection $\pi$ from the space of admissible covers. There is an equality of closed sets
$\pi'(\overline{\mathcal{EN}}^{\mathrm{sm}})=\pi(\overline{\mathcal{EN}})$, since $\mm_{2g-2k+1}(\PP^1,g-k+1)$ is a $PGL(2)$-cover of $\hh_{2g-2k+1,g-k+1}$ over the open set of morphisms with
smooth source and simple ramification. By Theorem \ref{outside-BN-divisor}, the point $[D] \in \overline{\mathcal{M}}_{2g-2k+1}$ defined by the stabilization of $B$ does not lie in
$\pi'(\overline{\mathcal{EN}}^{\mathrm{sm}})$, therefore, \ $[f] \notin \overline{\mathcal{EN}}^{\mathrm{sm}}$.
\end{proof}

To complete the proof of Theorem \ref{goneric} (assuming Theorem \ref{hard-thm}), we need to show that, in the situation of Lemma \ref{almost-done}, the point $[f]$ does not lie in the extended Eagon-Northcott divisor
$\widetilde{\mathcal{EN}}$. Note that $[f]$ lies in precisely one boundary divisor of $\widetilde{\mathcal{G}}_{2g-2k+1,g-k+1}^{\mathrm{ns}}$, namely the divisor $\Delta$ whose general point corresponds to maps $h:C\rightarrow \PP^1$, where $C$ is a union of two curves $C_1$ and $C_2$ of genera $g-1$ and $0$ respectively, meeting at two points, and such that $\mbox{deg}(h_{C_1})=g-k$ and $\mbox{deg}(h_{C_2})=1$.
Since $\widetilde{\mathcal{EN}}$ is pure of codimension one,  we need to show that $\widetilde{\mathcal{EN}}$ does not contain $\Delta$. We carry this out in the next section, using $K3$ surfaces.

\section{K3 Surfaces and Schreyer's Conjecture} \label{k3}

We start by considering a $K3$ surface $X=X_d$ with Picard group
generated by two classes $L$ and $E$ with self intersections given by $(L)^2=4d-4$, $(E)^2=0$ and $(L \cdot E)=d$, for $d \geq 3$.
By performing Picard-Lefschetz transformations and a reflection if necessary, we may assume that  $L$ is big and nef.

\begin{lem}
For $X$ as above, the class $L$ is base point free and $E$ is the class of a smooth elliptic curve.
\end{lem}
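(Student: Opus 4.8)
The plan is to reduce the statement to the structure theory of linear systems on K3 surfaces (Saint-Donat's theorems, as presented e.g.\ in Huybrechts' \emph{Lectures on K3 surfaces}). The key preliminary observation is that $\mathrm{Pic}(X)=\ZZ L\oplus\ZZ E$ contains \emph{no} class of self-intersection $-2$: writing $x=aL+bE$ one computes $x^{2}=2a\bigl(2a(d-1)+bd\bigr)$, so $x^{2}=-2$ forces $a=\pm1$ and hence $d\mid 1$, which is impossible for $d\geq 3$. Consequently $X$ carries no smooth rational curve, so $\mathrm{Nef}(X)$ is the closure of the positive cone and its interior is the ample cone. In particular $L$, being nef and big with $L^{2}=4d-4>0$, is in fact ample.

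First I would treat $E$. Riemann--Roch gives $\chi(\sheafO_{X}(E))=\tfrac12 E^{2}+2=2$, hence $h^{0}(E)+h^{0}(-E)\geq 2$; since $L$ is nef and $L\cdot(-E)=-d<0$, the class $-E$ is not effective, so $E$ is effective. The same sign forces $E$ to be nef: $-E\notin\mathrm{Nef}(X)$ because $L\cdot(-E)<0$, and as $E^{2}=0$ exactly one of $\pm E$ lies in $\mathrm{Nef}(X)=\overline{(\text{positive cone})}$. Now $E$ is a primitive isotropic nef class (a basis vector of $\mathrm{Pic}(X)$), so by Saint-Donat's analysis of such classes $|E|$ has no fixed component — a one-line Hodge-index argument does it: in any decomposition $E=F+M$ with $F$ effective and $M$ the moving part one has $E\cdot F=E\cdot M=0$, whence $M^{2}\leq 0$ with equality only if $M\in\QQ E$, and primitivity then forces $F=0$ — is base point free, and has $h^{0}(E)=2$. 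The morphism $\varphi_{|E|}\colon X\to\PP^{1}$ has connected fibres (otherwise $E$ would be a nontrivial multiple of a fibre class, contradicting primitivity), so by Bertini its general fibre is a smooth connected curve of self-intersection $0$, i.e.\ a smooth elliptic curve by adjunction. Thus $E$ is the class of a smooth elliptic curve.

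Then I would treat $L$. By Saint-Donat's base-point-freeness theorem, a nef line bundle $L$ with $L^{2}>0$ on a K3 surface fails to be base point free only when $L\cong\sheafO_{X}(aE_{0}+\Gamma)$ for an elliptic curve $E_{0}$, a smooth rational curve $\Gamma$ with $E_{0}\cdot\Gamma=1$, and an integer $a\geq 2$; since $X$ contains no $(-2)$-curve, this exceptional configuration cannot occur, so $|L|$ is base point free. Alternatively one may verify directly that no irreducible curve $E'$ with $(E')^{2}=0$ satisfies $L\cdot E'=1$: any such class is a positive multiple of one of the primitive isotropic classes of $\mathrm{Pic}(X)$, namely $\pm E$ together with $\pm\bigl(\tfrac d2 L-(d-1)E\bigr)$ for $d$ even, resp.\ $\pm\bigl(dL-2(d-1)E\bigr)$ for $d$ odd, on which $L$ has degree $d$, $d(d-1)$ or $2d(d-1)$, all at least $3$.

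The only point requiring any genuine care is the lattice bookkeeping: checking that $\mathrm{Pic}(X)$ has no $(-2)$-class (which simultaneously makes $L$ ample and excludes the exceptional case in Saint-Donat's theorem) and, if one follows the direct route, enumerating the primitive isotropic classes. Neither computation is difficult, and everything else is a direct appeal to the classical theory of linear systems on K3 surfaces, so I do not expect a serious obstacle here.
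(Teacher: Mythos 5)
Your proof is correct and follows essentially the same route as the paper: both reduce base-point-freeness of $L$ to the Saint-Donat criterion (no curve $F$ with $(F)^2=0$ and $(L\cdot F)=1$) and reduce the claim about $E$ to nefness of a primitive isotropic effective class, verifying everything by elementary arithmetic in the rank-two lattice. The only cosmetic difference is that you rule out all $(-2)$-classes at once (which also yields ampleness of $L$), whereas the paper runs two targeted contradiction computations, one for a hypothetical elliptic $F$ with $(L\cdot F)=1$ and one for a hypothetical $(-2)$-curve meeting $E$ negatively.
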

\begin{proof}
We firstly show that $L$ is base point free. As $L$ is big and nef, it suffices to show there is no smooth elliptic curve $F$ with $(L \cdot F) =1$, see \cite[Proposition 8]{mayer}.
Assume such $F$ exists, and
write $F=aL+bE$ for $a,b \in \mathbb{Z}$. As $F$ is smooth and elliptic, $(F)^2=0$, implying $0=(aL+bE) \cdot F=a+b(E \cdot F)=a(1+db)$. If $a=0$, then $(L \cdot F)=bd \neq 1$,
since $d \geq 2$, so $db=-1$, which is again impossible. Thus $L$ is base point free.

We next show that $E$ is the class of a smooth elliptic curve. As $(E)^2=0$ and $E$ is primitive, it suffices to show that $E$ is nef. Since $(E \cdot L)>0$, and $L$ is big and nef,
$E$ is effective.  Suppose $E$ is not nef.  Then there exists a smooth, rational curve $R$ with $(R \cdot E)<0$. Write $R=aL+bE$ for $a,b \in \mathbb{Z}$.
Then $(R \cdot E)<0$ implies $a<0$. As $(R)^2=-2$ and $R$ is effective, we must have $b>0$. We have $-2=(R)^2=R \cdot (aL+bE)=a(R \cdot L)+b(R \cdot E)=a\bigl((R \cdot L)+bd\bigr)$, which is
impossible for $d \geq 3$.
\end{proof}

We now discuss the Brill-Noether theory of a smooth curve $C \in |L|$. To that end, we follow  \cite[\S 2]{kemeny-singular} which works in the situation of a higher rank
Picard lattice containing the lattice $\text{Pic}(X_d)$.
\begin{lem} \label{lem2.5}
Let $D \in \mathrm{Pic}(X_d)$ be effective with $(D)^2 \geq 0$. Assume in addition $L-D$ is effective and $(L-D)^2 > 0$. Then $D=cE$, for some integer $c$.
\end{lem}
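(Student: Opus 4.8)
The plan is to write $D=aL+bE$ with $a,b\in\mathbb{Z}$ and prove that $a=0$, which gives $D=cE$ with $c=b$. The only geometric input required is that $E$ is nef: by the preceding lemma $E$ is represented by a smooth irreducible curve, and since $(E)^2=0\geq 0$ it meets every irreducible curve non-negatively, so $E\cdot Z\geq 0$ for every effective class $Z$ on $X_d$.

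First I would intersect $D$ and $L-D$ with $E$. Since $D$ is effective, $0\leq E\cdot D=a(L\cdot E)+b(E)^2=ad$, and because $d\geq 3>0$ this forces $a\geq 0$. Since $L-D$ is also effective, $0\leq E\cdot(L-D)=(1-a)(L\cdot E)-b(E)^2=(1-a)d$, hence $a\leq 1$. Therefore $a\in\{0,1\}$.

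It then remains only to exclude $a=1$. If $a=1$ then $L-D=-bE$, so $(L-D)^2=b^2(E)^2=0$, contradicting the hypothesis $(L-D)^2>0$. Hence $a=0$ and $D=bE$, as claimed.

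I do not anticipate a genuine obstacle: once one notices that the class to test against is the nef elliptic class $E$, the rest is elementary lattice arithmetic. Note in particular that the hypotheses $(D)^2\geq 0$ and the bigness and nefness of $L$ are not actually used in this deduction; they are presumably recorded for consistency with the higher-rank Picard lattice setting of \cite{kemeny-singular}, where the analogous statement is subtler because the test class need not a priori be nef.
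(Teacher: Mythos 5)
Your proof is correct and is essentially identical to the paper's: both write $D=aL+bE$, use the nefness of $E$ against the effective classes $D$ and $L-D$ to pin down $a\in\{0,1\}$, and rule out $a=1$ via $(L-D)^2=0$. Your closing observation that $(D)^2\geq 0$ is not actually needed in this rank-two setting is also accurate.
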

\begin{proof}
This is a slight modification of \cite[Lemma 2.5]{kemeny-singular}.
Write $D=aL+bE$. As $L-D$ is effective and $E$ nef, $(L-D) \cdot E=(1-a)(L\cdot E) \geq 0$, so $a \leq 1$. From $(D \cdot E) \geq 0$, we obtain $a \geq 0$. If $a=1$, then
$(L-D)^2=b^2(E)^2=0$, so we must have $a=0$ as required.
\end{proof}
The next lemma describes the Brill-Noether behaviour of  curves in the linear system $|L|$.
\begin{lem}\label{bnbeh}
Let $C \in |L|$ be a smooth curve. Then $\mathrm{Cliff}(C)=d-2$ and  $W^1_d(C)$ is reduced and consists of the single point
$\mathcal{O}_C(E)$.
\end{lem}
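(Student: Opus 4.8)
The plan is to combine the standard Lazarsfeld--Mukai bundle technology with the lattice-theoretic results already available. First I would compute $\mathrm{Cliff}(C)$. Since $C \in |L|$ with $(L)^2 = 4d-4$, the genus is $g(C) = 2d-1$, so the generic Clifford index would be $\lfloor (g-1)/2 \rfloor = d-1$. The class $E$ restricts to a $g^1_d$ on $C$ (as $(L \cdot E) = d$ and $h^0(X, E) = 2$, with $h^0(X, L-E) $ forcing the restriction sequence to behave), and this pencil has Clifford index $d - 2$. So $\mathrm{Cliff}(C) \leq d-2$. For the reverse inequality I would invoke the Green--Lazarsfeld result on constancy of Clifford index in a linear system on a $K3$ surface together with Lemma \ref{lem2.5}: any line bundle $M$ on $C$ computing a Clifford index $< d-2$ would, via the Lazarsfeld--Mukai bundle construction, produce a decomposition $L = D + D'$ with $D, D'$ effective, $(D)^2 \geq 0$, $(D')^2 \geq 0$, and $\mathrm{Cliff}(M) = (D \cdot D') - 2$; Lemma \ref{lem2.5} (and a symmetric application) then forces $D$ or $D'$ to be a multiple of $E$, which pins down $(D \cdot D') \geq d$, contradicting $\mathrm{Cliff}(M) < d - 2$. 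Hence $\mathrm{Cliff}(C) = d-2$, computed uniquely (up to the usual ambiguity) by $\mathcal{O}_C(E)$.

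Next I would identify $W^1_d(C)$. Any $A \in W^1_d(C)$ has Clifford index $\mathrm{Cliff}(A) = d - 2 = \mathrm{Cliff}(C)$, so it is a line bundle computing the Clifford index. Running the Lazarsfeld--Mukai argument again: the associated bundle $E_{C,A}$ on $X$ is either stable or decomposes, and in either case one extracts from $c_1(E_{C,A})$ or its factors a class in $\mathrm{Pic}(X_d)$ satisfying the hypotheses of Lemma \ref{lem2.5}. This forces the relevant divisor class to be $cE$, and matching degrees ($\deg A = d$) forces $c = 1$, so $A = \mathcal{O}_C(E)$ as the unique point of $W^1_d(C)$.

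Finally, for reducedness of $W^1_d(C)$ at the point $\mathcal{O}_C(E)$, the tangent space to $W^1_d(C) = G^1_d(C)$ at a base-point-free pencil $(A, V)$ is the kernel of the Petri map $\mu_0: H^0(C, A) \otimes H^0(C, \omega_C \otimes A^\vee) \to H^0(C, \omega_C)$; reducedness at a zero-dimensional point is equivalent to $\mu_0$ being injective. I would prove injectivity of the Petri map for $A = \mathcal{O}_C(E)$ by the base-point-free pencil trick combined with the cohomology of $X$: the kernel of $\mu_0$ is computed by $H^0$ of a bundle pulled back from $X$, and using that $E$ moves in a base-point-free pencil on $X$ whose general member is a smooth elliptic curve disjoint from generic translates, one shows this $H^0$ vanishes (this is the standard argument that pencils on $K3$ sections satisfy the Petri condition, cf. the original Lazarsfeld proof).

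The main obstacle I expect is the bookkeeping in the Lazarsfeld--Mukai step: one must be careful that the bundle $E_{C,A}$ could in principle be non-simple or that $c_1$ could fail to be the restriction of a class in $\mathrm{Pic}(X_d)$ rather than in a larger lattice — which is exactly why the lemma is phrased following \cite[\S 2]{kemeny-singular}, working in a higher-rank lattice containing $\mathrm{Pic}(X_d)$. Handling these degenerate possibilities cleanly, and ensuring the numerical inequalities coming out of Lemma \ref{lem2.5} genuinely exclude every competitor to $\mathcal{O}_C(E)$, is where the real care is needed; the Clifford index computation and the Petri injectivity are comparatively routine given the structure of $\mathrm{Pic}(X_d)$.
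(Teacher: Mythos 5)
The first two parts of your plan (the Clifford index computation and the set-theoretic identification of $W^1_d(C)$ via Lazarsfeld--Mukai bundles and Lemma \ref{lem2.5}) are essentially what the paper does; it simply defers these to \cite[Lemmas 2.6--2.8]{kemeny-singular}. The genuine gap is in your reducedness step, and it is not a matter of bookkeeping: your criterion is wrong, and the statement you propose to prove is false. For a curve $C$ of genus $g=2d-1$ and a pencil $A\in W^1_d(C)$ the Brill--Noether number is $\rho=g-(r+1)(g-d+r)=-1$, so the Petri map
$$\mu_0: H^0(C,A)\otimes H^0(C,\omega_C\otimes A^{\vee})\longrightarrow H^0(C,\omega_C)$$
has source of dimension $2d$ and target of dimension $2d-1$; it can never be injective. (Indeed, injectivity of $\mu_0$ at $(A,V)$ would force $G^1_d(C)$ to be smooth of dimension $\rho=-1$ there, i.e.\ the point would not exist.) Reducedness of the zero-dimensional scheme $W^1_d(C)$ at $A$ is instead equivalent to \emph{surjectivity} of $\mu_0$, equivalently, via the base-point-free pencil trick, to $\dim\ker\mu_0=h^0(C,\omega_C\otimes A^{\otimes(-2)})=1$, equivalently by Riemann--Roch to $h^0(C,A^{\otimes 2})=3$. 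Your plan to show that the relevant $H^0$ \emph{vanishes} by the ``standard Lazarsfeld argument'' cannot work: that argument establishes the Petri condition for Brill--Noether general sections of $K3$ surfaces of Picard number one, whereas here $C$ carries a pencil with $\rho<0$ and is deliberately not Brill--Noether general.

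What the paper actually proves at this step is precisely $h^0(\mathcal{O}_C(2E))=3$. It first computes $h^0(X,2E)=3$ (using $h^1(X,2E)=1$ from the sequence $0\to\OO_X(E)\to\OO_X(2E)\to\OO_E\to 0$ and Riemann--Roch on $X$), and then shows $h^0(X,2E-C)=h^1(X,2E-C)=0$ by a lattice argument ruling out effectivity of $\pm(C-2E)$, so that restriction $H^0(X,2E)\to H^0(C,2E)$ is an isomorphism. If you replace your injectivity claim by this surjectivity criterion and supply the computation $h^0(C,\mathcal{O}_C(2E))=3$ along these lines, the rest of your outline goes through.
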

\begin{proof}
The proof that $\mbox{Cliff}(C)=d-2$ is essentially the same as \cite[Lemma 2.6]{kemeny-singular}. Arguing as in \cite[Lemmas 2.7, 2.8]{kemeny-singular},
we see that $W^1_d(C)$ is set-theoretically a single point, namely $\OO_C(E)$.

It remains to establish that $W^1_d(C)$ is reduced, which amounts to showing that  $h^0(\mathcal{O}_C(2E))=3$.
From the exact sequence
$$0 \longrightarrow \OO_X(E) \longrightarrow \OO_X(2E) \longrightarrow  \OO_E(2E)\cong \mathcal{O}_E \longrightarrow 0,$$
we deduce $h^1(X, 2E)=1$ and then $h^0(X,2E)=3$ by Riemann--Roch.
By the exact sequence
$$0 \longrightarrow \OO_X(2E-C) \longrightarrow \OO_X(2E) \longrightarrow \OO_C(2E) \longrightarrow 0,$$
it suffices to show $h^0(X, 2E-C)=h^1(X, 2E-C)=0$. As $(C-2E)^2=-4$, by Riemann-Roch, it suffices to show that neither $2E-C$ nor $C-2E$ are effective.
As $(E \cdot 2E-C)<0$ and $E$ is nef, $2E-C$ is not effective. Now suppose $C-2E$ is effective with integral components $R_1, \ldots, R_{\ell}$, for $\ell \geq 1$.
We write $R_i=a_iL+b_iE$, for integers $a_i, b_i$, with $\sum_{i=1}^{\ell} a_i=1$ and $\sum_{i=1}^{\ell} b_i=-2$. As $(E \cdot R_i) \geq 0$, we find $a_i \geq 0$ for all $i$.
Without loss of generality, we may assume $a_1=1$ and $a_i=0$ for $2\leq i\leq \ell$. As $R_i$ is integral, we must then have $b_i=1$ for $i>1$. Thus $R_1=L-(\ell+1)E$, which implies
$(R_1)^2=4d-4-2d(\ell+1) \leq -4$, contradicting that $R_1$ is integral.
\end{proof}
We can now prove Theorem \ref{goneric}, that is, establish the Schreyer Conjecture.
\begin{proof}[Proof of Theorem \ref{goneric}]
Let $[f: B \to \PP^1]$ be as in the statement of Lemma \ref{almost-done}. By an argument along the lines of  \cite[Corollary 1]{V1}, we have an injection
$K_{g-k,1}(C, \omega_C) \hookrightarrow K_{g-k,1}(B, \omega_B)$. For the sake of completeness we recall the proof.

\vskip 3pt

The Mayer-Vietoris sequence induces an injection $H^0(C,\omega_C) \hookrightarrow H^0(B,\omega_B)$, as well as the composition of injections
$H^0(C,\omega_C^{\otimes 2}) \hookrightarrow
H^0\bigl(C,\omega_C^{\otimes 2}(\sum_{i=1}^{g-2k+1}(x_i+y_i))\bigr)\hookrightarrow H^0(B,\omega_B^{\otimes 2})$. We then get a commutative diagram
$$\xymatrix{
\bigwedge^{g-k+1} H^0(\omega_C) \ar[r]^{\delta_0 \; \; \;} \ar[d] &\bigwedge^{g-k}H^0(\omega_C)\otimes H^0(\omega_C)
\ar[r]  \ar[d] &\bigwedge^{g-k-1}H^0(\omega_C) \otimes H^0(\omega_C^{\otimes 2})  \ar[d] \\
\bigwedge^{g-k+1} H^0(\omega_B)  \ar[r]^{\delta'_0 \; \; \;} &\bigwedge^{g-k}H^0(\omega_B)\otimes H^0(\omega_B) \ar[r] &\bigwedge^{g-k-1}H^0(\omega_C) \otimes H^0(\omega_C^{\otimes 2})
}.$$
The conclusion now follows from the existence of  maps, see also \cite[Lemma 7.1]{aprodu-nagel}
$$\wedge: \; \wedge^{g-k}H^0(\omega_C)\otimes H^0(\omega_C) \to \wedge^{g-k+1} H^0(\omega_C)  \ \mbox{ and }
\wedge': \wedge^{g-k}H^0(\omega_B)\otimes H^0(\omega_B) \to \wedge^{g-k+1} H^0(\omega_B), $$
with $\wedge \circ \delta_0= \pm (g-k)\mbox{Id}$ and $\wedge' \circ \delta'_0= \pm (g-k)\mbox{Id}$.

\vskip 4pt

We secondly claim that $[f]$ does not lie in the extended Koszul divisor $\widetilde{\mathcal{EN}}$. In light of the injective map above, this will complete the proof.
As $[f]$ lies in exactly one boundary divisor, namely $\Delta$, all that remains is to show that the divisor $\widetilde{\mathcal{EN}}$ does not contain $\Delta$.
By  Lemma \ref{bnbeh}, we know that any smooth curve $C \in |L|$ on the K3 surface $X=X_{g-k+1}$ satisfies
$b_{g-k,1}(C,\omega_C)=g-k$. By the Lefschetz Theorem for Koszul cohomology  \cite{green-koszul}, the same holds for any integral nodal curve $C_0 \in |L|$. As any integral, nodal curve $C_0$
(with at least one node) defines a point in $\Delta$, it suffices to show that such curves exist for the general $X_{g-k+1}$.

\vskip 3pt

In order to do this, it suffices to take $2g-2k+1 \geq 8$, as the conclusion of the Theorem is well-known for $g \leq 8$ by \cite{schreyer1}. Indeed, if $g-k \leq 3$, then, since we are assuming $g \geq 2k-1$, we must have $k \leq 4$ and $g \leq 7$. The class $L-E$ is very ample for a general $K3$ surface $X_{g-k+1}$ general with the given Picard lattice, by degenerating to the $K3$ surface $Y_{\Omega_{2g-2k+1}}$ from
\cite[Lemma 2.3]{kemeny-singular}.
Choose a curve $C_1 \in |L-E|$ meeting a smooth elliptic curve $E_0 \in |E|$ transversally, and consider the nodal curve $C_1 \cup E_0$. Pick any node
$p_1 \in C_1 \cup E_0$. Then, by \cite[Theorem \ 3.8]{tannenbaum}, the moduli space $\bar{\mathcal{V}}_1(X_d)$ parametrising deformation of $C_1 \cup E_0$ preserving the assigned node $p_1$
is smooth near $(C_1 \cup E_0, p_1)$ of dimension $2g-2k$. As $\dim |L-E|+\dim |E|=g-k+1<g-1$ for $k \geq 3$, there exist integral $1$-nodal curves $C_0 \in |L|$,
completing the proof.
\end{proof}

\section{Twistings and pencils on singular curves}

In this section we prove Theorem \ref{hard-thm}. We need to construct twistings of line bundles on reducible curves. Suppose we have a family $\mathcal{B} \to \Delta$ of nodal curves over a smooth base $\Delta$, with smooth general fibre and with special fibre $C \cup R$, where $R \cong \PP^1$ and $C$ is smooth meeting $R$ transversally in two points. If the total family $\mathcal{B}$ is smooth, then $\mathcal{O}_{\mathcal{B}}(R)$ is a line bundle which is trivial outside of the special fibre. We will generalize the definition of this twist to the case where the general fibre is only integral and $\mathcal{B}$ is not necessarily smooth, so $R$ may not be a Cartier divisor on $\mathcal{B}$.

We first introduce some convenient notation.
\begin{definition} \label{conv-notation}
Let $X$ be a connected, nodal curve and $p \in X$. The ``blow-up" $c_p: \widetilde{X} \to X$ of $X$ at $p$ is defined as such:
\begin{enumerate}
\item If $p \in X$ is a node, let $\nu: X' \to X$ denote the partial normalisation of $X$ at $p$ and let $\nu^{-1}(p)=\{a,b\}$. Then $\widetilde{X}$ is defined to be $X' \cup E$, where $E \cong \PP^1$ and $E \cap X'=\{ a,b\}$.
\item If $p$ is not a node, then define $\widetilde{X}=X \cup E$, where $E \cong \PP^1$ with $X \cap E=\{p\}$. We further define the ``strict transform" $X'$ of $X$ to be the closure $\overline{\widetilde{X}\setminus E}$.
\end{enumerate}
In both cases, $c_p: \widetilde{X} \to X$ is given by contracting the unstable component $E$ to the point $p$.
\end{definition}

An \emph{abelian differential} of type $(1)^{2g-2}$ on a smooth curve $C$ of genus $g$ is an ordered marking $\alpha$ of degree $2g-2$ such that there exists a nonzero section $s \in H^0(C,\omega_C)$ with $s(\alpha)=0$. Such $(C,\alpha)$ define elements of $\mm_{g,2g-2}$.  Let $\mm_{g}\bigl((1)^{2g-2}\bigr) \seq \mm_{g,2g-2}$ be the space of twisted abelian differentials of type $(1)^{2g-2}$, i.e.\ the closure of the space of abelian differentials
of this type, \cite{far-pand}.

The first twisting construction we will use is described below and is rather well-known. We attach a proof due to lack of a suitable reference.
\begin{prop} \label{twisting-prop}
Let $(\Delta,0)$ be an irreducible, pointed, variety and
$\mathcal{B} \to \Delta$
be a flat family of nodal curves of genus $g \geq 2$ such that the fibre $B_{t}$ is irreducible for a general $t \in \Delta$ and
$$B_0 \cong C \cup R, \; \; R \cong \PP^1, \; \; R \cap C=\{u,v\},$$
and $C$ is irreducible. After a base change, there is a birational morphism $\nu: \widetilde{\mathcal{B}} \to \mathcal{B}$ of families of nodal curves over $\Delta$ and a line bundle $\tau \in \mathrm{Pic}(\widetilde{\mathcal{B}})$, such that one of the following cases occur:
\begin{enumerate}
\item $\nu$ is an isomorphism (and hence $\widetilde{B}_0 \cong B_0$). Furthermore $\tau_C \cong \mathcal{O}_C(u+v)$ and $\deg({\tau_R})=-2$.
\item $\widetilde{B}_0$ is a blow-up of $B_0$ at a node $p \in \{u,v\}$ with exceptional component $E$. Identifying $R$ and $C$ with their strict transforms, ${\tau_C} \cong \mathcal{O}_C(u+v)$ and  $\deg(\tau_R)=\deg(\tau_E)=-1$.
\end{enumerate}
\end{prop}
Informally, case $(1)$ corresponds to twisting by a line bundle, whereas case $(2)$ corresponds to twisting by a torsion-free sheaf.
\begin{proof}
Performing a base change if necessary, we choose markings $p_i: \Delta \to  \mathcal{B}$, for  $i=1, \ldots, g-1$, with $p_{g-1}(0) \in R \seq B_0 $ and  $p_{1}(0), \ldots, p_{g-2}(0)$ being general points of $C$. For all $t \in \Delta$, up to scaling, there exists a unique form $0\neq s_t \in H^0(B_t, \omega_{B_t})$  vanishing along $p_1(t)+\cdots +p_{g-1}(t)$. Note that $s_0$ vanishes identically on $R$. By the generality of the points $p_{i}(0)$ for $i=1,\ldots, g-2$, the restriction $s_{0|C}$ vanishes on an abelian differential of type $(1)^{2g-4}$. Hence, after possibly shrinking $\Delta$ and a further finite base change, those components of the vanishing set of $s_t$ which limit to points on $C$ provide additional sections $p_i: \Delta \to  \mathcal{B}$,  for  $i=g, \ldots, 2g-3,$
with $\sum _{i \neq g-1} p_i(0)$ defining an abelian differential on $C$, such that $[B_t, p_1(t), \ldots, p_{2g-3}(t)]$ lies in the image of the morphism
$\mm_{g}\bigl((1)^{2g-2}\bigr) \to \mm_{g,2g-3}$ obtained by forgetting the last marking. We obtain a map $\nu: \widetilde{\mathcal{B}} \to \mathcal{B}$ between families of nodal curves,
together with sections $q_i : \Delta \to \widetilde{\mathcal{B}}$ for    $i=1, \ldots, 2g-2,$
such that $[\widetilde{B}_t, q_1(t), \ldots, q_{2g-2}(t)] \in \mm_{g}\bigl((1)^{2g-2}\bigr)$ and, further, after forgetting the last marking $q_{2g-2}(t)$, these  curves stabilize to $[B_t,
p_1(t), \ldots, p_{2g-3}(t)]$.

\vskip 3pt

Notice that $\nu\bigl(q_{2g-2}(0)\bigr) \in R$. We now distinguish three cases. If $\nu\bigl(q_{2g-2}(0)\bigr) \notin \{u, v, p_{g-1}(0) \}$, then $\nu$ is an isomorphism, and we may take
$$\tau  :=\omega_{\mathcal{B}/ \Delta}\Bigl(-\sum_{i=1}^{2g-2} (\nu\circ q_i)(\Delta) \Bigr).$$
If $\nu(q_{2g-2}(0))=p_{g-1}(0)$,  then $\nu\circ q_{2g-2}$ defines a section in the smooth locus of each fibre $B_t$, and we define $\tau$ by the same formula. Lastly, suppose
$\nu(q_{2g-2}(0)) \in R \cap C$. We first observe that $\nu(q_{2g-2}(t))$ is not a node of $B_t$ for a general $t$. Indeed, otherwise $\widetilde{B}_t$ would be a blow-up of $B_t$ at a node. Further, there must be a nonzero section of $\omega_{\widetilde{B}_t}$ vanishing at the exceptional component $E$, as well as on smooth points $q_1(t), \ldots, q_{2g-3}(t)$, which  is impossible. Hence, in the last case, $\widetilde{\mathcal{B}}$ is a blow-up of $\mathcal{B}$ at a node in the intersection $C \cap R$, and we may now take $\tau  :=\omega_{\widetilde{\mathcal{B}}/ \Delta}\Bigl(-\sum_{i=1}^{2g-2} q_i(\Delta) \Bigr)$.
\end{proof}

The next twisting result is more sophisticated. Under further hypotheses, it ensures that we can put ourselves in the more degenerate case $(2)$ of Proposition \ref{twisting-prop}.

\begin{prop} \label{construction-ind-step}
Keeping the notation of Proposition \ref{twisting-prop}, assume additionally  we have a family
$$f: \mathcal{B} \to \PP^1\times \Delta$$
of stable maps of degree $k$ with $\mathrm{deg}(f_{0|R})=1$, \ $h^0(B_0, f^*_0\mathcal{O}_{\PP^1}(1))=2$,
and  $\omega_C \otimes f^*_{C} \mathcal{O}_{\PP^1}(-1)$  base point free. Assume the locus of points $t \in \Delta$ for which $B_t$ is reducible has codimension two. Then, after a base change, we have $\nu: \widetilde{\mathcal{B}} \to \mathcal{B}$ and $\tau \in \mathrm{Pic}(\widetilde{\mathcal{B}})$ as in case $(2)$ of Proposition \ref{twisting-prop}.
\end{prop}
\begin{proof}
Let $\mm^{\dagger} \seq \mm_{g}(\PP^1,k;2g-2-k)$ denote the substack of the moduli space of degree $k$ stable maps $h: B \to \PP^1$ with markings $p_1, \ldots, p_{2g-2-k}$ defined by the following conditions (i) $h^0(B,h^*\mathcal{O}_{\PP^1}(1))=2$ and  (ii) $H^0\bigl(B,\omega_B \otimes h^*\mathcal{O}_{\PP^1}(-1)(-p_1-\cdots-p_{2g-2-k})\bigr)\neq 0$.

The moduli space $\mm^{\dagger}$ may be constructed from an incidence variety in the obvious fashion, see also  \cite[\S 2]{BCGGM}.
We have a forgetful morphism $\mm^{\dagger} \to \mm_{g}(\PP^1,k)$. Let $$r: \Delta^{\dagger}:=\Delta \times_{\mm_{g}(\PP^1,k)} \mm^{\dagger} \to \Delta,$$
where $\Delta \to \mm_{g}(\PP^1,k)$ is induced by the family of stable maps $f:\mathcal{B} \to \PP^1\times \Delta$.

\vskip 3pt

We denote by $\widetilde{B}_0$ the blow-up of $B_0=C\cup_{\{u,v\}} R$ at $u$ and set
$\widetilde{f}_0:=f_0\circ c_u: \widetilde{B}_0 \to \PP^1$. Choose distinct points $p_1, \ldots, p_{2g-3-k}$ in the smooth locus of $B_0$, with
$\sum_{i=1}^{2g-3-k} p_i \in |\omega_C \otimes f_{C}^*\mathcal{O}_{\PP^1}(-1)|$ and pick a general point $p_{2g-2-k} \in E$. Then $$t:=[\widetilde{f}_0, p_1, \ldots, p_{2g-2-k}] \in \Delta^{\dagger}.$$

We claim that any component $I \seq \Delta^{\dagger}$ containing the point $t$ dominates $\Delta$ under the forgetful morphism $r$. We are then done, by replacing $\Delta$ with $I$, setting $\widetilde{f}:=f\circ \nu:\widetilde{\mathcal{B}} \to \PP^1\times I$ to be the universal family and choosing
$$\tau:=\widetilde{f}^*\mathcal{O}_{\PP^1\times I}(-1) \otimes \omega_{\widetilde{\mathcal{B}}/I}\Bigl(-\sum_{i=1}^{2g-2-k}\widetilde{p}_i\Bigr),$$
where $ \widetilde{p}_i: I \to  \widetilde{\mathcal{B}} $ are the markings.

\vskip 3pt

From the construction of $\Delta^{\dagger}$ as an incidence variety it follows that
 $$\dim I \geq \dim \Delta+\dim |\omega_{B_0} \otimes f_0^*\mathcal{O}_{\PP^1}(-1)|. $$
 Since $\dim|\omega_C \otimes f_C^* \mathcal{O}_{\PP^1}(-1)|=\dim |\omega_{B_0} \otimes f_0^*\mathcal{O}_{\PP^1}(-1)|$, it follows that $r(I)$ has codimension at most one in $\Delta$. By assumption, the general point of $r(I)$ corresponds to a stable map with \emph{irreducible} source $B$. But, in an open subset about $t \in I$, the fibre of $r$ over such a stable map with irreducible base has expected dimension $\dim |\omega_{B_0} \otimes f_0^*\mathcal{O}_{\PP^1}(-1)|$. Thus $I$ dominates $\Delta$.
\end{proof}

In fact, by the exact same argument, we could equally well ensure that we are in $(1)$ of Proposition \ref{twisting-prop}, under the assumptions of Proposition \ref{construction-ind-step}.
\begin{prop} \label{construction-ind-step-II}
Keeping the notation of Proposition \ref{twisting-prop}, assume additionally  we have a family
$$f: \mathcal{B} \to \PP^1\times \Delta$$
of stable maps of degree $k$ with $\mathrm{deg}(f_{0|R})=1$, \ $h^0(B_0, f^*_0\mathcal{O}_{\PP^1}(1))=2$,
and  $\omega_C \otimes f^*_{C} \mathcal{O}_{\PP^1}(-1)$  base point free. Assume the locus of points $t \in \Delta$ for which $B_t$ is reducible has codimension two. Then, after a base change, we have $\tau \in \mathrm{Pic}(\mathcal{B})$ as in case $(1)$ of Proposition \ref{twisting-prop}.
\end{prop}
\begin{proof}
Identical to the proof of Proposition \ref{construction-ind-step}, except that we choose $p_{2g-2-k} \in R \seq B_0$.
\end{proof}

It is worth remarking that the construction of the twisting bundle $\tau$ in Propositions \ref{construction-ind-step} and \ref{construction-ind-step-II} commutes with base change, in the obvious sense.

\subsection{Induction Step}
Our task is now to prove the induction step of Theorem \ref{hard-thm}.
We first prove a weakening of the induction step, using the more basic Proposition \ref{twisting-prop}.
\begin{prop} \label{1-node-special-case}
Assume Theorem \ref{hard-thm} holds for $n=m$. Let $C$ be a general curve of genus $2a-2-m$ and gonality $a-m-1$ with pencil $f: C \to \PP^1$ of degree $a-m-1$. Let $(x_i,y_i)$ be general pairs of points in $C$ for $i=1, \ldots, m+1$. Then $[f, x_1,y_1, \ldots, x_{m+1},y_{m+1}]\notin Z_{m+1}$. In particular, $Z_{m+1}$ has codimension one in $\widetilde{\mathcal{M}}_{2a-1-(m+1)}^{\mathrm{ns}}\bigl(\PP^1,a-(m+1);2(m+1)\bigr)$.
\end{prop}

The conclusion of Proposition \ref{1-node-special-case}  make it possible to later apply  Proposition  \ref{construction-ind-step} and thus finish the proof of Theorem \ref{hard-thm}.
\begin{proof}
Suppose $[f,x_1,y_1, \ldots, x_{m+1},y_{m+1}] \in Z_{m+1}$. Let $B_{m+1}:=C \cup R_{m+1}$ with $R_{m+1} \cong \PP^1$ and $R_{m+1} \cap C=\{x_{m+1},y_{m+1}\}$ and let
$f_{m+1}: B_{m+1} \to \PP^1$ be the stable map with $f_{{m+1}|C}=f$ and  $\deg(f_{{m+1}|R_{m+1}})=1$. Our hypothesis implies
$$t=[f_{m+1},x_1,y_1, \ldots, x_m,y_m] \in Z_{m}.$$
Let $J \seq Z_m$ be any irreducible component containing $t$. As $\dim J \geq \dim \widetilde{\mathcal{G}}_{2a-1,a}^{\mathrm{ns}}-2m-1$ but $\dim Z_{m+1} \leq \dim \widetilde{\mathcal{M}}_{2a-2-m}^{\mathrm{ns}}(\PP^1,a-m-1;2(m+1))=\dim \widetilde{\mathcal{G}}_{2a-1,a}^{\mathrm{ns}}-2m-2$, the general point
$$[g: T\to \PP^1, x'_1,y'_1, \ldots, x'_m,y'_m]$$ of $J$ is a marked stable map with \emph{irreducible} source curve $T$. Let $\widetilde{B}_{m+1}$ be the curve obtained from $B_{m+1}$ by glueing $x_i$ to $y_i$ for $i=1, \ldots, m$ and contracting the unstable component $R_{m+1}$. Then $\widetilde{B}_{m+1}$ has gonality $a$ by Proposition \ref{tf-limit}. It follows that the curve $\widetilde{T}$ obtained from $T$ by glueing $x'_i$ to $y'_i$ for $i=1, \ldots, m$ has gonality at least $a$, in particular $\text{gon}(T)=a-m$.

\vskip 4pt

Let $S \seq  \{x'_1,y'_1, \ldots, x_m',y_m'\}$ be a set of cardinality at most $m$. As explained, a Riemann--Roch calculation gives $h^0\bigl(C,f^*\mathcal{O}_{\PP^1}(2)(\sum_{s \in S'} s)\bigr)=3$ for any $S' \seq \{x_1,y_1, \ldots, x_{m+1}, y_{m+1}\}$ of cardinality $|S'| \leq m+2$. Further, there is a unique admissible cover in $\pi^{-1}([\widetilde{B}_{m+1}])$, by Proposition  \ref{adm-limit}. In Lemma \ref{lemma-first-part-thm} and Proposition \ref{uniqueness-AssumpI}, which follow, we establish the following statements:
\begin{itemize}
\item $h^0\bigl(T,g^*(\mathcal{O}_{\PP^1}(2))(\sum_{s \in S}s)\bigr) =3.$
\item $\pi^{-1}([\widetilde{T}])$ consists of a unique admissible cover.
\end{itemize}
All these facts yield a contradiction by the assumption that Theorem \ref{hard-thm} holds for $n=m$.
\end{proof}

For the next two lemmas we fix the following notation. We let $C$ be an integral, nodal curve of genus $2a-2-m$ and gonality $a-m-1$, together with a pencil $f: C \to \PP^1$ of degree $a-m-1$. For $i=1,\ldots, m+1$, let $x_i,y_i\in C_{\mathrm{reg}}$ be distinct points such that $f(x_i) \neq f(y_i)$. Let $B_{m+1}:=C \cup R_{m+1}$ with $R_{m+1} \cong \PP^1$ and $R_{m+1} \cap C=\{x_{m+1},y_{m+1}\}$ and we denote by
\begin{equation}\label{fm}
f_{m+1}: B_{m+1} \to \PP^1
\end{equation}
the map with $f_{{m+1}|C}=f$ and $\deg_{R_{m+1}}(f_{m+1})=1$. Finally,  $\widetilde{B}_{m+1}$ is the curve obtained from $B_{m+1}$ by glueing $x_i$ to $y_i$ for $i=1, \ldots, m$ and contracting the unstable component $R_{m+1}$.

\begin{lem} \label{lemma-first-part-thm}
Let $(\Delta, 0)$ be an irreducible, pointed variety and let $$\phi: \mathcal{B} \to \PP^1\times \Delta, \; \;  \sigma_i,\tau_i: \; \Delta \to \mathcal{B} \mbox{ for } \ i=1, \ldots, m$$ be a family of marked stable maps over $(\Delta, 0)$ such that $\phi_{0}=f_{m+1}:B_{m+1}\rightarrow \PP^1$, $\sigma_i(0)=x_i$ and  $\tau_i(0)=y_i$ for $i=1, \ldots, m$.
For a general point $t\in \Delta$, assume the fibre $B_{t}$ is irreducible and let $S_t \seq \{\sigma_1(t), \tau_1(t), \ldots, \sigma_m(t), \tau_m(t)\}$ be a set of cardinality at most $m$.
\vskip 3pt
\renewcommand{\theenumi}{\Roman{enumi}}%
 \begin{enumerate}
 \item If $h^0\bigl(C,f^*\mathcal{O}_{\PP^1}(2)(\sum_{s \in S'} s)\bigr)=3$ for any subset $S' \seq \{x_1,y_1, \ldots, x_{m+1}, y_{m+1}\}$ of cardinality at most $m+2$, then $h^0\bigl(B_t,\phi_t^*\mathcal{O}_{\PP^1}(2)(\sum_{s_t \in S_t}s_t)\bigr) =3.$
 \vskip 3pt
 \item Assume  $h^0\bigl(C,f^*\mathcal{O}_{\PP^1}(2)(\sum_{s \in S'} s)\bigr)=3$ for any subset $S' \seq \{x_1,y_1, \ldots, x_{m+1}, y_{m+1}\}$ of cardinality at most $m+1$. If  $B_{t}$ is irreducible for $t \in \Delta$ outside a set of codimension two and  $\omega_C \otimes f^*\mathcal{O}_{\PP^1}(-1)$ is base point free, then $h^0\bigl(B_t,\phi_t^*\mathcal{O}_{\PP^1}(2)(\sum_{s_t \in S_t}s_t)\bigr) =3.$
 \end{enumerate}
\end{lem}
\begin{proof}
We choose a set of sections $\mathcal{S} \seq \{\sigma_1,\tau_1, \ldots, \sigma_m, \tau_m\}$ of cardinality at most $m$ and set
$$\mathcal{M}:=\phi^*\mathcal{O}_{\PP^1\times \Delta}(2)\bigl(\sum_{s \in \mathcal{S}} s\bigr)\in \mathrm{Pic}(\mathcal{B}).$$
We shall prove that $h^0(B_{t}, \mathcal{M}_t)=3$, for a general $t \in \Delta$. There exists a birational map $\nu: \widetilde{\mathcal{B}} \to \mathcal{B}$
together with a line bundle $\tau\in \mbox{Pic}(\widetilde{\mathcal{B}})$ enjoying the properties listed in Proposition \ref{twisting-prop}. Furthermore, when Assumption (II) applies,
then, by Proposition \ref{construction-ind-step}, we may ensure that $\tau$ is as in case $(2)$ of Proposition \ref{twisting-prop}.
Set $\mathcal{L}:=\nu^* \mathcal{M} \otimes \tau\in \mbox{Pic}(\widetilde{\mathcal{B}})$. Since $h^0(B_t,\phi_t^*\OO_{\PP^1}(2))\geq 3$, it suffices to show
$h^0(\widetilde{B},\mathcal{L}_0)\leq 3$,
where $\widetilde{B}$ denotes the fibre of $\widetilde{\mathcal{B}}$ over the point $0\in \Delta$.

\vskip 3pt

Assume first we are in case $(1)$ of Proposition \ref{twisting-prop}, thus $\widetilde{B}=B_{m+1}$. Then $C \seq \widetilde{B}$  and
$\mathcal{L}_{C} \cong f^*\mathcal{O}_{\PP^1}(2)\bigl(\sum_{s \in S} s+x_{m+1}+y_{m+1}\bigr)$,
where $S=\{s(0):s \in \mathcal{S}\}\subseteq \{x_1, y_1, \ldots, x_m,y_m\}$. Furthermore, $\mathcal{L}_{R_{m+1}}\cong \mathcal{O}_{R_{m+1}}$ and the claim follows by twisting the exact sequence
$$0 \longrightarrow \mathcal{O}_{R_{m+1}}(-2) \longrightarrow  \mathcal{O}_{B_{m+1}} \longrightarrow \mathcal{O}_C \longrightarrow 0$$
by $\mathcal{L}$ and using our assumptions.

\vskip 3pt

Assume now $\widetilde{B}$ is as in case $(2)$  of Proposition  \ref{twisting-prop}. Then $$\mbox(\deg \mathcal{L}_{R_{m+1}},  \deg \mathcal{L}_{E}\bigr)=(1,-1) \mbox{ and }
\ \;\mathcal{L}_C \cong f^*\mathcal{O}_{\PP^1}(2)\bigl(\sum_{s \in S} s+x_{m+1}+y_{m+1}\bigr).$$ Set $B':=C \cup R_{m+1} \seq \widetilde{B}$. Via
the exact sequence $0 \to \mathcal{O}_{B'}(-z-w) \to \mathcal{O}_{\widetilde{B}} \to \mathcal{O}_E \to 0$,
where $\{w\}=E \cap R_{m+1}$ and $\{z\}=C \cap E$, it suffices to show $h^0(B',\mathcal{L}_{B'}(-z-w))=3$,
following from $$h^0\bigl(C, f^*\mathcal{O}_{\PP^1}(2)(\sum_{s \in S} s+z')\bigr)=3,$$
where $\{z'\}=C \cap R_{m+1} \subseteq \{x_{m+1}, y_{m+1} \}$, which is a consequence of our assumptions.
\end{proof}

\vskip 3pt

\noindent {\bf{Set-up for the rest of the paper.}} We fix  and irreducible, pointed variety $(\Delta, 0)$  and let 
$\bigl(\phi: \mathcal{B} \to \PP^1\times \Delta, \; \;  \sigma_i,\tau_i: \; \Delta \to \mathcal{B}\bigr)$ be a family of marked stable maps of degree $a-m$ such that 
$\phi_{0}=f_{m+1}:B_{m+1}\rightarrow \PP^1$, $\sigma_i(0)=x_i$ and  $\tau_i(0)=y_i$ for $i=1, \ldots, m$. Assume the general fibre $B_{t}$ is an irreducible curve of genus $2a-m-1$. 
For $t\in \Delta\setminus \{0\}$, let $X_t$ be the curve obtained from the fibre $B_{t}$ by identifying the points $\sigma_i(t)$ and $\tau_i(t)$ for $i=1, \ldots, m$. 
As $t \to 0$, this tends to the stable curve $X_0:= \widetilde{B}_{m+1}$ of genus $2a-1$.

\medskip

With this set-up, we analyse the potential admissible covers in $\pi^{-1}([X_t])$ for general $t$. To begin with, we always have at least one element in the set $\pi^{-1}([X_t])$. Namely, 
if $\mu_t: B_t \to X_t$ denotes the partial normalization for $t \neq 0$, we consider the torsion-free sheaf
$$(\mu_t)_*\bigl(\phi_t^* \mathcal{O}_{\PP^1}(1)\bigr) \in W^1_a(X_t).$$
By the construction of \cite[Theorem 5]{ha-mu}, we may associate admissible covers corresponding to these torsion-free sheaves. We call such covers associated via this construction the \emph{Harris-Mumford covers} over $[X_t]$.

For $t=0$, let $\mu: C \to \widetilde{B}_{m+1}$ denote the partial normalization. Then we associate a Harris--Mumford cover $[f': B' \to T]\in \pi^{-1}([\widetilde{B}_{m+1}])$ to the torsion free sheaf $\mu_* f^*\mathcal{O}_{\PP^1}(1) \in W^1_a(\widetilde{B}_{m+1})$. It will prove useful to describe the Harris--Mumford cover $[f': B' \to T]$ over $X_0=\widetilde{B}_{m+1}$ in more detail. Firstly, $B'$ has a component isomorphic to the normalization $\widetilde{C}$ of $C$. All other components are isomorphic to $\PP^1$. The restriction $f'_{\widetilde{C}}$ is the composition $\widetilde{f}=f\circ \nu$, of $f$ with the normalization morphism $\nu:\widetilde{C}\rightarrow C$.  The fact that $\pi^{-1}([\widetilde{B}_{m+1}])$ consists of a single element implies that the pencil $f: C \to \PP^1$ is  unramified at the points $x_i, y_i$ for $i=1,\ldots, m+1$. Moreover, $\widetilde{f}$ is unramified at all points above nodes of $C$. Set $\PP^1_0:=f'(\widetilde{C})$. For each pair $(x_i,y_i)$ the curve $B'$ contains a rational chain with three components which meets $\widetilde{C}$ in $x_i,y_i$, with precisely one component dominating $\PP^1_0$. The map $f'$ has degree one on this component dominating $\PP^1_0$ and degree two on the other two components (cf.\ Proposition \ref{adm-limit}). For each node $p \in C$ with points $q_1, q_2 \in \widetilde{C}$ over $p$, there is a single rational component $R_p \cong\PP^1$ of $B'$ which meets $\widetilde{C}$ at $q_1, q_2$, and which does \emph{not} map to $\PP^1_0$. The admissible cover $f'$ has degree two on this component $R_p$. There are further rational tails attached to $\widetilde{C}$, none of which dominate $\PP^1_0$.

Let $\bigl(\phi: \mathcal{B} \to \PP^1\times \Delta, \; \;  \sigma_i,\tau_i \bigr)$ be as above and assume $\pi^{-1}([\widetilde{B}_{m+1}])$ consists of a single point, which must necessarily be a Harris--Mumford cover. Under certain assumptions, we shall prove that $\pi^{-1}([X_t])$ contains at least two elements for a general $t\in \Delta$. We firstly show how to construct certain stable maps from extraneous elements of $\pi^{-1}([X_t])$.

\begin{prop} \label{lem-unique}
Let $\bigl(\phi: \mathcal{B} \to \PP^1\times \Delta, \; \;  \sigma_i,\tau_i \bigr)$ be as above. Assume $\pi^{-1}([\widetilde{B}_{m+1}])$ consists of a single point and suppose $\pi^{-1}([X_t])$ contains at least two elements for a general $t\in\Delta$. Then, after replacing $\Delta$ by a finite base change, we have a second family $$\epsilon: \mathcal{Y}  \to \PP^1\times \Delta$$ of finite stable maps, with the following properties. Firstly, restricting to $\Delta^*:=\Delta \setminus \{0\}$, either $\mathcal{Y}^*:=\mathcal{Y} \times_{\Delta} \Delta^*$ is isomorphic to $\mathcal{B}^*:=\mathcal{B} \times_{\Delta} \Delta^*$, or there is a birational morphism $\mu_{q}: \mathcal{Y}^* \to \mathcal{B}^*$ such that the fibres of $\mu_{q}$ are partial normalizations $\mu_{q_t} : Y_t \to B_t$ at a single node $q_t \in B_t$. Secondly, the special fibre $Y_0$ is one of the following:
\begin{enumerate}
\item If $\mathcal{Y}^* \cong \mathcal{B}^*$, then the  fibre $Y_0$ is $B_{m+1}=C \cup R_{m+1}$ blown-up at some subset $S'$ of  $\{x_1,y_1, \ldots, x_m,y_m\}$ of cardinality at most $m$,
\item If $\mathcal{Y}^*\ncong \mathcal{B}^*$, then $Y_0$ is the blow-up of $C$ at some subset $S'$ of $\{x_1,y_1, \ldots, x_m,y_m\}$  of cardinality at most $m$,
\end{enumerate}
in the notation of Definition \ref{conv-notation}. Moreover, in all cases ${\epsilon_{0}}_{|C}=f$, $\deg_{E_z} (\epsilon_0)=1$ for any exceptional component $E_z$ at $z \in S'$ and lastly $\deg_{R_{m+1}}(\epsilon_0)=1$ in the case $R_{m+1} \seq Y_0$.

\medskip

Furthermore, in the case $\mathcal{Y}^* \cong \mathcal{B}^*$, $S'=\emptyset$, and $R_{m+1} \seq Y_0$, then the stable map $\sigma \circ \epsilon_t$ does not coincide with $\phi_t$ for general $t$ and any $\sigma \in \text{PGL}(2)$. Likewise, if $Y_t$ is the partial normalization at $q_t \in Y_t$, then $\sigma \circ \epsilon_t$ does not coincide with $\phi_t \circ \mu_{q_t}$ for any $\sigma \in \text{PGL}(2)$.
\end{prop}
\begin{proof}
By assumption, for general $t\in \Delta$ there exists an admissible cover $[f'_t: B'_t \to T_t]$ over $X_t$ which, (i) specializes to the unique element of $\pi^{-1}([\widetilde{B}_{m+1}])$ as $t\to 0$
and (ii) it is  distinct from the unique Harris--Mumford cover constructed from $(\mu_t)_*\bigl(\phi_t^* \mathcal{O}_{\PP^1}(1)\bigr)$.

\vskip 3pt

For general $t\in \Delta$, there exists a subcurve $B^n_{t} \seq B'_t$ isomorphic to the normalization of $B_{t}$. The limit of $B^n_{t}$ at $t\to 0$ is a connected subcurve $B^n_{0}$ of $B'$ containing $\widetilde{C}$. For each node $p \in C$, the component $R_p$ of $B'$ meeting $\widetilde{C}$ at $q_1, q_2$ either lies in  $B^n_{0}$, or else, there is a
smooth rational component $R_{p_t}$ of $B'_t$ meeting $B^n_{t}$ at two distinct points $q_{1,t}$ and $q_{2,t}$ and with $f'_t(q_{1,t})=f'_t(q_{2,t})$. As $t\to 0$, the curve  $R_{p,t}$ approaches $R_p$ and $q_{i,t}$ approaches $q_i$  for $i=1,2$.

\vskip 3pt

Let $\overline{B}^n_{t}$ be the union of $B^n_{t}$ with all components $R_{p,t}$ as above and let $D_t$ denote its stabilization. Define the finite stable map
$\chi_{t}: D_t \to \PP^1$ by first restricting $f'_t$ to $\overline{B}^n_{t}$, then contracting the target $T_t$ to $\PP^1_0:=f'_t(B^n_{t})$ and then lastly stabilizing the resulting morphism (this contracts each component $R_{p,t}$). As $t\to 0$, the map $\chi_{t}$ tends to the finite stable map
$$\chi_{0}: D_0 \to \PP^1,$$
where $D_0$ is a connected curve which is either:
\begin{enumerate}
\item The blow-up of $C$ at some set $S \seq \{x_1,y_1, \ldots, x_{m+1},y_{m+1}\}$  of cardinality at most $m+1$ containing at most one term from each pair $(x_i,y_i)$ for all $i$, or
\item  $B_{m+1}=C \cup_{\{x_{m+1}, y_{m+1}\}} R_{m+1}$ blown-up at some set $S \seq \{x_1,y_1, \ldots, x_m,y_m\}$ of cardinality at most $m$.
\end{enumerate}

Note that $\mbox{deg}(\chi_0)\leq a$. The two cases are distinguished depending on whether the curve $B_t$ limiting to $B_{m+1}$ smooths out the nodes $x_{m+1}$ and $y_{m+1}$. In the first case, the irreducible curve $B_t$ has a node $q_t$ and $D_t$ is isomorphic to the partial normalization $\mu_{q_t}: D_t \to B_{t}$ at $q_t$, with points $\sigma_{m+1}(t)$ and $\tau_{m+1}(t)$ lying over this node. In the second case, $D_t \cong B_{t}$. In both cases we define $C_2$ to be the curve formed by contracting all exceptional components $E_i$ of $D_0$.
If $E_1, \ldots, E_{\ell}$ are the exceptional components of $D_0$, where $\ell\leq m+1$, then $\deg_{E_i}(\chi_0)=1$ for $i=1, \ldots,\ell$ and  $\chi_{0|C}=f$. If, further, $R_{m+1} \seq D_0$ then $\deg_{R_{m+1}}(\chi_0)=1$.

\vskip 3pt

In the case $C_2=B_{m+1}$, that is,  when $D_0$ contains $R_{m+1}$, we must have $D_t=B_{t}$. In this situation, we set $Y_t:=D_t$, set $S'=S$ and define the finite stable map $\epsilon_{t}:=\chi_t:Y_t\rightarrow \PP^1$, for all $t \in \Delta$. Notice that the stable map $\epsilon_{t}$ cannot lie in the orbit of $\phi_{t}$ under the natural $PGL(2)$-action, or else the admissible cover $f'_t$ would coincide with the admissible cover constructed from $(\mu_{t})_*(\phi^*_{t} \mathcal{O}_{\PP^1}(1))$.

In the case $C_2=C$, that is, when $D_0$ does not contain $R_{m+1}$, there exist distinct points $\sigma_{m+1}(t)$ and $\tau_{m+1}(t)$ in the smooth locus of $D_t$, together with a connected chain of rational curves in $B'_t$ which meets $B^n_{t}$ at $\sigma_{m+1}(t)$ and $\tau_{m+1}(t)$. There are now two cases which we must consider separately.

First of all, assume $x_{m+1}, y_{m+1} \notin S$. Then, $(\sigma_{m+1}(t), \tau_{m+1}(t)) \to (x_{m+1},y_{m+1})$ as $t\to 0$. Since $f$ does not identify the points $x_{m+1}, y_{m+1} \in C$, the map $\chi_{t}: D_t \to \PP^1$ also cannot identify $\sigma_{m+1}(t)$ and  $\tau_{m+1}(t)$. Once again, we define $\epsilon_{t}:=\chi_t:Y_t\rightarrow \PP^1$ with $Y_t:=D_t$ and set $S'=S$. The stable map $\epsilon_{t}$ cannot lie in the orbit of $\phi_{t} \circ \mu_{q_t}$ under the natural $PGL(2)$-action, as the latter map identifies $\sigma_{m+1}(t)$ and  $\tau_{m+1}(t)$.

\vskip 3pt

In the second case $C_2=C$ and $x_{m+1} \in S$ or $y_{m+1} \in S$. In this case, $\chi_{t}$ identifies $\sigma_{m+1}(t)$ with $\tau_{m+1}(t)$. We set  $Y_t:=B_t$ and define $\epsilon_{t}:Y_{t} \to \PP^1$ for $t \neq 0$ as the unique map such that $\epsilon_{t} \circ \mu_{q_t}=\chi_{t}$. The limit of $\epsilon_{t}$ at $t\to 0$ is the stable map $\epsilon_{0}: \; Y_0 \to \PP^1,$ where $Y_{0}=D_0 \cup R_{m+1}$, $\deg_{R_{m+1}}(\epsilon_{0})=1$ and ${\epsilon_{0}}_{|_{D_0}}=\chi_0$. In this case $S':=S \setminus S \cap \{x_{m+1}, y_{m+1} \}$. Once again, $\epsilon_{t}$ cannot coincide with $\phi_{t}\circ \mu_{q_t}$ after automorphisms of the target, as otherwise $f'_t$ would coincide with the admissible cover constructed from $(\mu_{t})_*(\phi^*_{t} \mathcal{O}_{\PP^1}(1))$.

\vskip 3pt

After a base change, we have a family $\epsilon : \mathcal{Y} \to \PP^1 \times \Delta$ with fibres $\epsilon_t$ as above and with the claimed properties.
\end{proof}

\begin{eg}
We illustrate Proposition \ref{lem-unique} in the case $m=0$ with $C$ being a smooth curve of genus $2a-2$ and gonality $a-1$. Let $[f'_t: B'_t \to T_t]$ be an admissible cover over $X_t=B_t$ as in the proof of Proposition \ref{lem-unique}. There are two cases. If $B_t$ is smooth (and of genus $2a-1$), then in the notation of the proof, $D_t=B_t \seq B'_t$ and $\epsilon_t=\chi_t$ is a stable map $\epsilon_t: B_t \to \PP^1$ of the same degree as $\phi_t$ but not isomorphic to it, modulo $PGL(2)$.

In the second case, $B_t$ has a single node $q_t$. In this case, $B'_t$ contains a component $D_t=B^n_t$ given by the normalization of $B_t$, as well as a chain of rational components meeting $B^n_t$ at the points $\sigma_{1}(t), \tau_{1}(t)$ lying over $q_t$. Since $\lim_{t \to 0}f'_t=[f': B \to T] \in \pi^{-1}([\widetilde{B}_{1}])$, it follows from the explicit description of the admissible cover $f': B \to T$ that there are two possibilities. Firstly, the chain at $\sigma_{1}(t), \tau_{1}(t)$ may have three rational components, which happens when $f'_t(\sigma_1(t))\neq f'_t(\tau_1(t))$. In this case, $\lim_{t \to 0}D_t=D_0=C$ and $\{\sigma_{1}(t), \tau_{1}(t) \} \to \{ x_{1}, y_{1} \}$. We have that $\epsilon_t=\chi_t: D_t \to \PP^1$ is a stable map tending to $\epsilon_0=f$.

In the last case, $\chi_t: D_t \to \PP^1$ identifies $\sigma_{1}(t), \tau_{1}(t)$, that is, $f'_t(\sigma_1(t))=f'_t(\tau_1(t))$. In this case the chain at $\sigma_{1}(t), \tau_{1}(t)$ has a single component, say $\Gamma_t$. In this case, $\Gamma_t$ limits to a smooth, rational component of the base $B'$ of $f'$ which meets $C$ at $z \in \{ x_{1}, y_{1} \}$. Then $\chi_t: D_t \to \PP^1$ tends to $\chi_0: D_0 \to \PP^1$, with $D_0$ the blow-up of $C$ at $z'=\{ x_{1}, y_{1} \}\setminus \{z \}$. In this case, we set $\epsilon_t: B_t \to \PP^1$ to be the unique map factoring through $\chi_t$. Then $\epsilon_0$ has, as base, a curve isomorphic to $B_{1}=C \cup R_{1}$.
\end{eg}

The next two propositions are key technical results needed for the induction step for Theorem \ref{hard-thm}.
\begin{prop} \label{uniqueness-AssumpII}
Let $\bigl(\phi: \mathcal{B} \to \PP^1\times \Delta, \; \;  \sigma_i,\tau_i \bigr)$ be as above, with $\phi_0=f_{m+1}$ and with $B_t$ irreducible for $t \in \Delta$ outside a set of codimension two. Assume  $h^0\bigl(C,f^*\mathcal{O}_{\PP^1}(2)(\sum_{s \in S} s)\bigr)=3$ for any subset $S \seq \{x_1,y_1, \ldots, x_{m+1}, y_{m+1}\}$ of cardinality at most $m+1$. Assume further that $\omega_C \otimes f^*\mathcal{O}_{\PP^1}(-1)$ is base point free. Then if $\pi^{-1}([\widetilde{B}_{m+1}])$ consists of a single point, likewise $\pi^{-1}([X_t])$ consists of a unique element for general $t\in \Delta$.
\end{prop}
\begin{proof}
Suppose for a contradiction that $\pi^{-1}([X_t])$ contains at least two elements for general $t$. After a base change, there is a family $\epsilon: \mathcal{Y}  \to \PP^1\times \Delta$ of stable maps as in Proposition \ref{lem-unique}.

Assume firstly that $\mathcal{Y}^*$ is not isomorphic to $\mathcal{B}^*$. Replace $\Delta$ by a smooth curve mapping into $\Delta$ and passing through both $0$ and a sufficiently general $t$; in this way we may take the base $\Delta$ to be one-dimensional and smooth. Let $\mu': \mathcal{Y} \to \mathcal{Y}'$ be the morphism which contracts the unstable components $E_1, \ldots, E_{\ell}$ on the central fibre, $\ell=|S'| \leq m$, and is otherwise an isomorphism (this exists after a further base change). By Lemma \ref{lb-cont} below applied to $\epsilon^*\mathcal{O}_{\PP^1\times \Delta}(1)$, there exists a line bundle
$$\mathcal{N} \in \text{Pic}(\mathcal{Y}'),$$
with $\mathcal{N}_{t} \cong \epsilon^*_{t} \mathcal{O}_{\PP^1}(1)$ for $t \neq 0$, and
$\mathcal{N}_{0} \cong f^* \mathcal{O}_{\PP^1}(1) \bigl(\sum_{s \in S'}s\bigr)$. In this case, possibly after base change, $\mathcal{B}$ is singular along a section $q:\Delta\rightarrow \mathcal{B}$. Let $$\mu_q: \; \mathcal{B}' \to \mathcal{B}$$ be the partial normalization at $q$ and let $\mathcal{B}^{n}$ be the surface obtained from $\mathcal{B}'$ by contracting any exceptional components on the special fibre (there will be precisely one such component, over either $x_{m+1}$ or $y_{m+1}$). Possibly after a base change, we have $\mathcal{Y}' \cong \mathcal{B}^{n}$. Let
$$\phi':=\phi\circ \mu_q: \mathcal{B}'\to \PP^1\times \Delta.$$
Applying Lemma \ref{lb-cont}, there exists a line bundle $\mathcal{G} \in \mathrm{Pic}(\mathcal{B}^n) \cong \mathrm{Pic}(\mathcal{Y}')$ with
$\mathcal{G}_{t} \cong (\phi_t')^* \mathcal{O}_{\PP^1}(1)$ and with $\mathcal{G}_{0} \cong f^*\mathcal{O}_{\PP^1}(1)(z)$, where $z \in \{x_{m+1}, y_{m+1} \}.$ By the base-point free pencil trick $h^0(\mathcal{N}_{t} \otimes \mathcal{G}_{t}) \geq 4$ for general $t\in \Delta$. But our hypotheses give $h^0(\mathcal{N}_{0} \otimes \mathcal{G}_{0})=3$, which is a contradiction.

So we may assume $\mathcal{Y} \cong \mathcal{B}$. Thus, for a general $t\in \Delta$ we have a family of maps
$$(\phi_t, \epsilon_t) \; : B_t \to \PP^1 \times \PP^1.$$ Base-changing if necessary, we complete this to a family $j : \mathcal{J} \to \PP^1 \times \PP^1 \times \Delta$ of stable maps over $\Delta$, with $j_t=(\phi_t, \epsilon_t)$ for a general $t\in \Delta$ and such that $\phi$ (respectively  $\epsilon$) is obtained by stabilizing $\mbox{pr}_1 \circ j$ (respectively  $\mbox{pr}_2 \circ j$), where $\mbox{pr}_i : \PP^1 \times \PP^1 \to \PP^1$ are the projections for $i=1,2$. The central fibre $J_0$ contains exceptional components $E_1, \ldots, E_{\ell}$ over $\{z_1 \ldots, z_{\ell} \}=S'$, $\ell=|S'|$, and $j_0$ has degree one on these curves. We have a morphism $c: \mathcal{J} \to \mathcal{J}'$ contracting $E_1, \ldots E_{\ell}$, which factors through the stabilization morphism $\mathcal{J} \to \mathcal{B}$. Consider the line bundle $\mathcal{L} \in \text{Pic}(\mathcal{J})$ defined by $j^*\left(\mathcal{O}_{\PP^1}(1) \boxtimes \mathcal{O}_{\PP^1}(1) \right)$. Then $\deg_{E_i}(\mathcal{L})=1$ for all $i=1, \ldots, \ell$. We have $h^0(B_t,L_t) \geq 4$ for a general $t \in \Delta$.

\vskip 4pt

There are two cases. In the first case $\mathcal{J}' \cong \mathcal{B}$. Applying Proposition \ref{construction-ind-step}, we have $\nu: \widetilde{\mathcal{B}} \to \mathcal{B}$ and $\tau \in \mathrm{Pic}(\widetilde{\mathcal{B}})$ as in case $(2)$ of Proposition \ref{twisting-prop}. Choose a smooth curve $\Delta' $ with a map $i: \Delta' \to \Delta$ such that $i(\Delta')$ contains $0 \in \Delta$ as well as a sufficiently general $t \in \Delta$. We may now apply Lemma \ref{lb-cont} to the line bundle $\mathcal{L}$ to produce $\mathcal{L}' \in \text{Pic}(\mathcal{J}') \cong \text{Pic}(\mathcal{B})$.
In our case $$J'_0=B_{m+1}, \ {L'_0}_{|C} \cong f^*\mathcal{O}_{\PP^1}(2)( \sum_{s \in S'} s ), \; \; \; {L'_0}_{|{R_{m+1}}} \cong \mathcal{O}_{R_{m+1}}(2).$$
Consider $M:=\tau \otimes \nu^* \mathcal{L}'\in \mbox{Pic}(\widetilde{\mathcal{B}})$. By the same argument as Lemma \ref{lemma-first-part-thm}, $h^0(M_0) \leq 3$, which contradicts semicontinuity.

\vskip 4pt

In the second case $J'_0$ is the blow-up of $\mathcal{B}$ at some $z \in \{x_{m+1},y_{m+1} \}$. In this case, the exceptional component $E_z$ is contracted by $\phi$ but is identified with a non-contracted component of $\epsilon$. Applying Proposition \ref{construction-ind-step}, we have $\tau \in \mathrm{Pic}(\mathcal{B})$ as in case $(1)$ of Proposition \ref{twisting-prop}. Apply Lemma \ref{lb-cont} to the line bundle $\mathcal{L}$ to produce $\mathcal{L}' \in \text{Pic}(\mathcal{J}')$ and define $M:=s^*\tau \otimes \mathcal{L}'$,
where $s: \mathcal{J}' \to \mathcal{B}$ is the natural stabilization morphism. Thus $\deg_{E_z}(M_0)=1$ and $\deg_{R'_{m+1}}(M_0)=-1$, where $R'_{m+1} \seq J'_0$ is the strict transform of $R_{m+1}$. Once again $h^0(M_0) \leq 3$, contradicting semicontinuity.
\end{proof}
The following lemma was needed in the proof of Proposition \ref{uniqueness-AssumpII} above. The proof is well-known.
\begin{lem} \label{lb-cont}
Let $(\Delta,0)$ be a smooth pointed curve and $\mathcal{C} \to \Delta$ a proper family of nodal curves  central fibre $C_0=T \cup E$,
where $T$ is nodal and connected and $E \cong \PP^1$, with $\{z\}=E \cap  T$ a single point. Assume $z $ is an isolated singularity of the surface $\mathcal{C}$. Let $\mathcal{L}\in \mathrm{Pic}(\mathcal{C})$ be a line bundle with $\deg_E(\mathcal{L})=1$. If $\mu_E: \mathcal{C} \to \mathcal{C}'$ denotes the contraction morphism of $E$, then there exists a line bundle $\mathcal{L}'$ on $\mathcal{C}'$ with $\mathcal{L}'_t \cong \mathcal{L}_t$ for $t \neq 0$ and $\mathcal{L}'_0 \cong \mathcal{L}_{0|T}(z).$
\end{lem}

Note that after a finite base change, a contraction morphism as in the lemma above always exists. We now prove the analogue of Proposition \ref{uniqueness-AssumpII} under an amended hypothesis.

\begin{prop} \label{uniqueness-AssumpI}
Let $\bigl(\phi: \mathcal{B} \to \PP^1\times \Delta, \; \;  \sigma_i,\tau_i \bigr)$ be as above, with $B_t$ irreducible for general $t$ and $\phi_0=f_{m+1}$. 
Assume $|\pi^{-1}([\widetilde{B}_{m+1}])|=1$ and $h^0\bigl(C,f^*\mathcal{O}_{\PP^1}(2)(\sum_{s \in S} s)\bigr)=3$ for any subset $S \seq \{x_1,y_1, \ldots, x_{m+1}, y_{m+1}\}$ with $|S|\leq m+2$. Then $|\pi^{-1}([X_t])|=1$ for a general $t\in \Delta$.
\end{prop}
\begin{proof}
The fibre $\pi^{-1}([X_t])$ always contains at least one element, given by a Harris--Mumford cover. Suppose $\pi^{-1}([X_t])$ contains at least two elements for 
general $t$. Then, after a base change, there is a family $\epsilon: \mathcal{Y}  \to \PP^1\times \Delta$ of stable maps as in Proposition \ref{lem-unique}. 
If $\mathcal{Y}^*$ is not isomorphic to $\mathcal{B}^*$, we derive a contradiction arguing as in Proposition \ref{uniqueness-AssumpII}. 
So assume $\mathcal{Y}^* \cong \mathcal{B}^*$. Replacing $\Delta$ by a smooth curve mapping into $\Delta$ and passing through both $0$ and a sufficiently general $t$ we may take the base $\Delta$ to be one-dimensional and smooth. Let $\widetilde{\mathcal{B}}$ denote the family of stable curves obtained by stabilization from $\mathcal{B}$.

 We have two stabilization morphisms $r_1: \mathcal{B} \to \widetilde{\mathcal{B}}$ and $r_2: \mathcal{Y} \to  \widetilde{\mathcal{B}}$, which are isomorphisms outside of 
 the special fibre. Choose general points $a, b \in \PP^1$. Define $Z_a:=r_1(\phi^{-1}(a \times \Delta))$ and $Z_b:=r_2(\epsilon^{-1}(b \times \Delta))$.  Let $Z:=Z_a \cup Z_b$. Then $Z$ defines a flat family of $0$-dimensional subschemes of the fibres $\widetilde{B}_t$. For general $t$, the fibres $Z_{t}$ is a Cartier divisor on $\widetilde{B}_t$, whereas $Z_0 \seq \widetilde{B}_0$ defines a closed subscheme of the one-nodal curve $\widetilde{B}_0$. As $t \to 0$, the line bundles $\mathcal{O}_{B_t}(Z_t)$ tend to the torsion-free sheaf
 $$\mathcal{O}_{B_0}(Z_0):=\text{Hom}(I_{Z_0}, \mathcal{O}_{Z_0}).$$
 Furthermore, the conclusion of Proposition \ref{lem-unique} shows that $Z_{a,t}$ is not linearly equivalent to $Z_{b,t}$ for general $t$.  As we have previously seen, the base-point free pencil trick implies $h^0(\mathcal{O}_{B_t}(Z_t)) \geq 4$, and so $h^0(\mathcal{O}_{B_0}(Z_0)) \geq 4$ by semicontinuity.

 We can describe $Z_0$ as the union of the Cartier divisor
$$f^{-1}(a)+f^{-1}(b)+S' \seq C \setminus \{x_{m+1}, y_{m+1} \} \seq \widetilde{B}_0$$
with a closed subscheme $T$ of length two supported at the node $q \in \widetilde{B}_0$.

\vskip 3pt

Following \cite[\S 2]{coppens-free} we let $Z'_0 \seq Z_0$ denote the \emph{free part} of $Z_0$ (also see \cite[\S 2]{rosa-stohr}). Then
$$\mathcal{O}_{B_0}(Z'_0) := \text{Im}\Bigl\{H^0(\mathcal{O}_{B_0}(Z_0)) \otimes \mathcal{O}_{B_0} \to \mathcal{O}_{B_0}(Z_0) \Bigr\}$$
is globally generated, but not necessarily base-point free at the node $q \in \widetilde{B}_0$. Note that $Z_0'$ is the union of the Cartier divisor
 $$f^{-1}(a)+f^{-1}(b)+S'',$$
 where $S''$ is the complement of the base locus in $S'$, with a closed subscheme $T' \seq T$ of length at most two supported at the node.

Firstly, suppose the torsion-free sheaf $\mathcal{O}_{B_0}(Z'_0)$ is locally free. We have the partial normalization map $\mu_{m+1}: C \to B_0$. Any divisor in $|\mu_{m+1}^*(\mathcal{O}_{B_0}(Z'_0))|$ which contains one of the points $x_{m+1}, y_{m+1}$, must also contain the other point. Thus $\mu^*_{m+1}(Z'_0)$ is either $f^{-1}(a)+f^{-1}(b)+S''$ or $f^{-1}(a)+f^{-1}(b)+S''+x_{m+1}+y_{m+1}$. Either way, we get a contradiction to the hypothesis.

\vskip 4pt

 Next, suppose $\mathcal{O}_{B_0}(Z'_0)$ is not locally free. In this case, $\mathcal{O}_{B_0}(Z'_0)=(\mu_{m+1})_*(N)$ for some line bundle $N$ with $h^0(C,N)=h^0(\mathcal{O}_{B_0}(Z'_0))$ and $\deg(N)=\deg \mathcal{O}_{B_0}(Z'_0)-1$, see e.g.\ \cite[\S 10]{oda-seshadri}. The linear system $|N|$ contains a divisor of the form $f^{-1}(a)+f^{-1}(b)+S''+D$ where $D$ is supported on $x_{m+1}, y_{m+1}$ and has degree at most one. Once again, we get a contradiction to the hypothesis.\medskip

\end{proof}

We now bootstrap on Proposition \ref{1-node-special-case} in order to prove the induction step for Theorem \ref{hard-thm}.

\vskip 3pt

\begin{proof}[Proof of Theorem \ref{hard-thm}]
It remains to prove the induction step. Let $C$ be an integral nodal curve of genus $2a-2-m$ with a unique pencil $f:C\rightarrow \PP^1$ of degree $a-m-1$ and points $x_i,y_i\in C$ for $i=1, \ldots, m+1$ as in the hypotheses of the theorem for $n=m+1$. Set $A:=f^*\mathcal{O}_{\PP^1}(1)$, hence $h^0(C, A)=2$ and $\mbox{dim } W^1_{a-m-1}(C)=0$.


We claim $\omega_C\otimes A^{\vee}$ is base point free. Otherwise, there exists a  point $p \in C$ such that all sections of $\omega_C\otimes A^{\vee}$ vanish at $p$. Let $N:=\text{Ker}\{\mathrm{ev}_p: \omega_C \otimes A^{\vee} \to \mathbb C_p\}$. There is a partial normalization $\mu: \widetilde{C} \to C$ at $\delta\geq 0$ nodes and a line bundle $\widetilde{N}$ on $\widetilde{C}$ such that $\mu_*\widetilde{N}=N$. Then $\omega_{\widetilde{C}}\otimes \widetilde{N}^{\vee} \in W^2_{a-m-\delta}(\widetilde{C})$,  implying $\dim W^1_{a-m-1-\delta}(\widetilde{C}) \geq 1$, thus $\dim W^1_{a-m-1}(C) \geq 1$, a contradiction.

\vskip 3pt

We now argue exactly as in Proposition \ref{1-node-special-case}. Suppose  $[f,x_1,y_1, \ldots, x_{m+1}, y_{m+1}] \in Z_{m+1}$ which implies that $t=[f_{m+1},x_1,y_1, \ldots, x_m, y_m] \in Z_{m}$. Let $J \seq Z_m$ be any component containing $t$. By the same dimension count as in Proposition \ref{1-node-special-case}, the general point $[g: T\to \PP^1, (x'_i,y'_i)]$ of $J$ is once again a marked stable map with \emph{irreducible} source $T$. Moreover, from the conclusion of Proposition \ref{1-node-special-case}, we have the improved bound $$\dim Z_{m+1} \leq \dim \widetilde{\mathcal{M}}_{2a-2-m}^{\mathrm{ns}}(\PP^1,a-m-1;2(m+1))-1,$$
so that the locus of reducible curves in $J$ has codimension two about $t$. By Lemma \ref{lemma-first-part-thm} and Proposition \ref{uniqueness-AssumpII}, applied to $\Delta=J$, we derive a contradiction to the induction hypothesis.
 \end{proof}


\begin{thebibliography}{aaaaaaa}
\bibitem[ACV]{twisted} D. Abramovich, A. Corti, and A. Vistoli, { \em{Twisted bundles and admissible covers}}, Communications in Algebra \textbf{31} (2003), 3547-3618.
\bibitem[Ap2]{aprodu-remarks} M. Aprodu, {\em{Remarks on syzygies of $d$-gonal curves}}, Math. Research Letters \textbf{12} (2005), 387-400.
\bibitem[AN]{aprodu-nagel} M. Aprodu and  J. Nagel, {\emph{Koszul cohomology and algebraic geometry}}, University Lecture Series \textbf{52},
American Mathematical Society, Providence, RI (2010).
\bibitem[AS]{aprodu-sernesi} M. Aprodu and  E. Sernesi, {\em{Secant spaces and syzygies of special line bundles on curves}}, Algebra and Number Theory \textbf{9} (2015), 585-600.
\bibitem[AV]{AV} M. Aprodu and C. Voisin, {\em{Green-Lazarsfeld's conjecture for generic curves of large gonality}}, C.R. Math. Acad. Sci. Paris \textbf{336} (2003), 335-339.
\bibitem[AC]{arbarello-cornalba} E. Arbarello, M. Cornalba, {\em{Footnotes to a paper of Beniamino Segre}}, Math. Ann. \textbf{256} (1981), 341-362.
\bibitem[ACGH]{ACGH1} E. Arbarello, M. Cornalba, P.A. Griffiths and J. Harris, {\em{Geometry of algebraic curves, Volume I}}, Grundlehren der Mathematischen Wissenschaften \textbf{267}, Springer, Heidelberg (1985).
\bibitem[BCGGM]{BCGGM} M. Bainbridge, D. Chen, Q. Gendron, S. Grushevsky, and M Moeller, {\em{ Compactification of strata of abelian differentials}} Duke Math. Journal \textbf{167} (2018), 2347-2416.

\bibitem[B]{bopp} C. Bopp, {\em{Syzygies of $5$-gonal canonical curves}}, Documenta Mathematica \textbf{20} (2015), 1055-1069.
\bibitem[CE]{casnati-ekedahl} G. Casnati and T. Ekedahl, {\em{Covers of algebraic varieties I. A general structure theorem,
covers of degree 3, 4 and Enriques surfaces}}, J. Algebraic Geometry \textbf{5} (1996), 439-460.
\bibitem[C1]{coppens} M. Coppens, {\em{One dimensional linear systems of type II on smooth curves}}, Ph.D. Thesis, Utrecht (1983).
\bibitem[C3]{coppens-free} M. Coppens, {\em{Free linear systems on integral Gorenstein curves}}, J. Algebra \textbf{145} (1992), 209-218.
\bibitem[D]{deligne} P. Deligne, {\em{Le Lemme de Gabber}}, Ast\'erisque \textbf{127} (1985), 131-150.
\bibitem[EH]{eisenbud-harris-limit} D. Eisenbud and J. Harris, {\em{Limit linear series: basic theory}}, Inventiones Math.\ \textbf{85} (1986), 337-371.
\bibitem[EL]{ein-lazarsfeld} L. Ein and R. Lazarsfeld, {\em{The gonality conjecture on syzygies of algebraic curves of
              large degree}}, Publ. Math. Inst. Hautes \'Etudes Sci. \textbf{122} (2015), 301-313.
\bibitem[FK]{generic-secant} G. Farkas and M. Kemeny, {\em{The generic Green--Lazarsfeld Secant Conjecture}}, Inventiones Math. \textbf{203} (2016), 265-301.
\bibitem[FP]{far-pand} G. Farkas and R. Pandharipande, {\em{The moduli space of twisted canonical divisors}},  Journal of the  Institute  Math.\ Jussieu.  \textbf{17} (2018), 615-672.
\bibitem[G]{green-koszul} M. Green, {\em{Koszul cohomology and the cohomology of projective varieties}}, Journal of Differential Geometry \textbf{19} (1984), 125-171.
\bibitem[GL1]{green-lazarsfeld-projective} M. Green and R. Lazarsfeld, {\em{On the projective normality of complete linear series on an algebraic curve}}, Inventiones Math. \textbf{83} (1986), 73-90.

\bibitem[HM]{ha-mu} J. Harris and D. Mumford, {\em{On the Kodaira dimension of the moduli space of curves}}, Inventiones Math \textbf{67} (1982), 23-86.

\bibitem[HR]{hirsch} A. Hirschowitz and S. Ramanan, {\em{New evidence for Green's Conjecture on syzygies of canonical curves}}, Annales Scientifiques de l'\'Ecole Normale Sup\'erieure \textbf{31} (1998), 145-152.
\bibitem[K1]{kemeny-singular} M. Kemeny, {\em{The Moduli of Singular Curves on K3 Surfaces}}, J. Math. Pures. Appl. \textbf{104} (2015), 882-920.
\bibitem[K2]{extremal-gonality} M. Kemeny, {\em{The extremal gonality conjecture for curves of arbitrary gonality}}, arXiv:1512.00212.

\bibitem[M]{mayer} A. Mayer, {\em{Families of K3 surfaces}}, Nagoya Math. J. \textbf{48} (1972), 1-17.
\bibitem[OS]{oda-seshadri} T. Oda and C. Seshadri, {\em{Compactifications of the generalized Jacobian variety}}, Trans.\ American Math.\ Soc.\ \textbf{1979}, 1-90.


\bibitem[R]{rathmann} J. Rathmann, {\em{An effective bound for the gonality conjecture}}, arXiv:1604.06072.
\bibitem[RS]{rosa-stohr} R. Rosa and K-O St\"ohr, {\em{Trigonal Gorenstein curves}}, J.\ Pure Appl.\ Algebra \textbf{174}(2002), 187-205.

\bibitem[Sch1]{schreyer1} F.-O. Schreyer, {\em{Syzygies of canonical curves and special linear series}}, Math. Ann. \textbf{275} (1986), 105-137.
\bibitem[Sch2]{sch} F.-O. Schreyer, {\em{Green's conjecture for the general $p$-gonal curve of large genus}}, In: Algebraic curves and projective geometry, Springer Lecture Notes \textbf{1389} (1988), 254-260.
\bibitem[Sch3]{schreyer-topics} F.-O. Schreyer, {\em{Some topics in computational algebraic geometry}}, In: Advances in algebra and geometry (Hyderabad, 2001), Hindustan Book Agency (2003), 263-278.
\bibitem[SSW]{SSW} J. Schicho, F.-O. Schreyer and M. Weimann, {\em{Computational aspects of gonal maps and radical parametrization of curves}}, Appl.\ Algebra Engrg.\ Comm.\ Comput.\ \textbf{24} (2013), 313-341.
\bibitem[Ta]{tannenbaum} A. Tannenbaum, {\em{Families of curves with nodes on K3 surfaces}}, Math.\ Ann. \textbf{260} (1982), 239-253.
\bibitem[Te]{Te} M. Teixidor, {\em{Syzygies using vector bundles}}, Transactions of the American Mathematical Society \textbf{359} (2007), 897-908.
\bibitem[Ty]{Ty} I. Tyomkin, {\em{On Severi varieties on Hirzebruch surfaces}},  Int. Math.  Research Notices \textbf{23} (2007).
\bibitem[V1]{V1} C. Voisin, {\em{Green's generic syzygy conjecture for curves of even genus lying on a $K3$ surface}}, Journal of
European Math. Society \textbf{4} (2002), 363-404.
\bibitem[V2]{V2} C. Voisin, {\em{Green's canonical syzygy conjecture for generic curves of odd genus}},
Compositio Mathematica \textbf{141} (2005), 1163-1190.

\end{thebibliography}
\end{document}